\documentclass[11pt]{amsart}
\usepackage{graphicx}
\usepackage{amssymb}
\usepackage{adjustbox}
\usepackage{amsthm}
\usepackage{accents}
\usepackage{amsmath}
\usepackage{mathdots}
\usepackage{dynkin-diagrams}
\usepackage{enumerate}
\usepackage{enumitem}
\usepackage{geometry}
\usepackage{mathtools}
\usepackage[utf8]{inputenc}
\usepackage{leftidx}
\newtheorem{theorem}{Theorem}[section]
\newtheorem{proposition}[theorem]{Proposition}
\newtheorem{lemma}[theorem]{Lemma}
\newtheorem{corollary}[theorem]{Corollary}
\newtheorem{conjecture}[theorem]{Conjecture}
\theoremstyle{definition}
\newtheorem{definition}[theorem]{Definition}
\newtheorem{example}[theorem]{Example}
\newtheorem{remark}[theorem]{Remark}
\usepackage{float}
\usepackage{hyperref}
\usetikzlibrary{shapes.geometric}

\usepackage[
  backend=bibtex,
  style=alphabetic
]{biblatex}

\usepackage{tikz}
\usepackage{tikz-cd}

\definecolor{ForestGreen}{RGB}{34,139,34}

\usepackage{stackengine}
\newtheorem{innerthm}{Theorem}
\newenvironment{maintheorem}[1]
  {\renewcommand\theinnerthm{#1}\innerthm}
  {\endinnerthm}

\usetikzlibrary{matrix}

\title[Triangulated polygons and $\mathbf{Y}$-frieze patterns]{Triangulated polygons and $\mathbf{Y}$-frieze patterns}
\author{Hin Chung Henry Tsang}
\author{Jon Wilson}
\address{Jeremiah Horrocks Institute, University of Lancashire,
  Preston PR1 2HE, UK}
\email{hchtsang@lancashire.ac.uk, jwilson30@lancashire.ac.uk}

\date{}

\begin{document}

\begin{abstract}
In the spirit of Conway and Coxeter \cite{CC}, we classify all $\mathbf{Y}$-frieze patterns of type $A_n$. In particular, we settle a conjecture made by de Saint Germain that all such $\mathbf{Y}$-frieze patterns arise from Conway-Coxeter frieze patterns \cite{germain2023frieze}. Moreover, our approach naturally leads to the enumeration of these $\mathbf{Y}$-frieze patterns in terms of Fuss-Catalan numbers.
\end{abstract}
\maketitle

\section{Introduction}

In 1971, Coxeter introduced frieze patterns in \cite{Cox}. A frieze pattern of width $w$ is an array of $w+6$ staggered rows of entries with $3$ fixed rows above and below, and entries between being positive integers, such that every ``diamond'' \[\begin{matrix}
    &N&\\
    W&&E\\
    &S&
\end{matrix}\]
satisfies the following ``diamond rule'':

\[WE=1+NS\]

Two years later, in \cite{CC}, Conway and Coxeter showed that there is a one-to-one correspondence between frieze patterns of width $w$ and triangulations of a $w+3$-gon.

Interest was re-ignited in these classical frieze patterns in the early 2000's by the emergence of Fomin and Zelevinsky's \textit{cluster algebras} \cite{FZ}. Indeed, building on earlier work with Schiffler \cite{caldero2006quivers}, in 2006 Caldero and Chapoton established deep connections between these cluster algebras and the representation theory of quivers \cite{caldero2006cluster}. In particular, for quivers of type $A$, they showed cluster variables admit a representation-theoretic interpretation in terms of indecomposable objects in the associated cluster category, with the clusters themselves corresponding to triangulations of a polygon.

Furthermore, the Auslander--Reiten quiver of the cluster category provides a representation-theoretic realisation of the associated frieze; the indecomposable objects can be arranged according to the combinatorics of the polygon, and the corresponding cluster characters satisfy the diamond rule defining the frieze pattern. Thus giving a conceptual explanation for the appearance of Conway--Coxeter friezes within the representation theory of type $A$ cluster algebras.

Soon after, these connections were placed in a broader geometric setting by Fomin, Shapiro and Thurston, who equipped each marked (orientable) surface with the structure of a cluster algebra \cite{fomin2008cluster}. In this framework, arcs and triangulations of the surface correspond to cluster variables and clusters respectively, with the type $A$ case again arising from a polygon. In particular, when the polygon is realised as an ideal polygon in the hyperbolic plane, the entries of the associated frieze can also be interpreted geometrically in terms of lambda lengths, with the frieze relation reflecting the famous Ptolemy relation. 

This interplay between friezes, cluster algebras, representation theory and geometry has subsequently led to a variety of generalisations and new classes of friezes, including the notion of \textit{unitary friezes} introduced by Gunawan and Schiffler \cite{gunawan2018frieze}, which built on an earlier definition of Morier-Genoud \cite{MG}. Roughly speaking, these are the friezes that arise from a cluster algebra by specialising the initial cluster variables of some cluster to $1$. \newline

On the other hand, the cluster-algebraic framework also gives rise to a second family of variables, known as $\mathbf{Y}$-variables, which encode the coefficient data of a cluster algebra and evolve under mutation according to the rules of a $\mathbf{Y}$-pattern \cite{fomin2007cluster}. In the same paper, inspired by their earlier work in \cite{fomin2003systems}, Fomin and Zelevinsky highlighted that these $\mathbf{Y}$-patterns generalise the $\mathbf{Y}$-systems of Zamolodchikov \cite{zamolodchikov1991thermodynamic}. This general framework therefore provides a natural setting for considering frieze-like structures governed by such $\mathbf{Y}$-patterns.

In 2023, de Saint Germain pursued this angle; introducing the so called $\mathbf{Y}$-frieze patterns in \cite{germain2023frieze}. A $\mathbf{Y}$-frieze pattern of width $w$ is an array of $w+6$ staggered rows of entries with $3$ fixed rows above and below, and entries between being positive integers, such that every ``diamond'' \[\begin{matrix}
    &N&\\
    W&&E\\
    &S&
\end{matrix}\]
satisfies the following ``$\mathbf{Y}$-diamond rule'':

\[WE=(1+N)(1+S)\]

Furthermore, in the same paper, de Saint Germain showed that every frieze pattern $A$ of width $w$ gives rise to a $\mathbf{Y}$-frieze pattern $B$ of width $w$ by taking the second non-trivial row of $A$ as the first non-trivial row of $B$, the map $A\mapsto B$ is denoted by $p_w$.

The map $p_w$ is in general not injective. In \cite{dSG2}, de Saint Germain defined two frieze patterns $A$ and $A'$ to be $\mathbf{Y}$-equivalent if $p_w(A)=p_w(A')$.

On the other hand, de Saint Germain conjectured in the same paper that $p_w$ is surjective for all $w$. In other words, quoting directly from \cite{dSG2}, there is a bijection \[\tilde{p}_w:\operatorname{Frieze}(w)/\sim_{\mathbf{Y}} \rightarrow \operatorname{YFrieze}(w)\]

The surjectivity of $p_w$ can be interpreted in the following way: Since $p_w$ sends unitary frieze patterns to unitary $\mathbf{Y}$-frieze patterns, the surjectivity of $p_w$ implies that all $\mathbf{Y}$-frieze patterns are unitary. For the details please refer to \cite{germain2023frieze} and \cite{MG}.

In this paper, we start with preliminaries on frieze patterns in Section \ref{definitions}. In Section \ref{Y-equivalent}, we shall provide a combinatorial model of $\operatorname{Frieze}(w)/\sim_{\mathbf{Y}}$. In Section \ref{Y-frieze}, we shall see that $p_w$ is surjective for all $w$.

Our \textbf{main results} are the following:

\begin{maintheorem}{A}[Theorem \ref{main}]\label{intro_main}
Every $\mathbf{Y}$-frieze pattern of width $w$ is the image, under $p_w$, of a frieze pattern of width $w$.
\end{maintheorem}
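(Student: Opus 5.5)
Throughout, write $m_1,\dots,m_n$ for the quiddity of a frieze of width $w$, with $n=w+3$. The plan is a Conway--Coxeter style induction on $w$: we build, for an arbitrary $\mathbf{Y}$-frieze $B$ of width $w$, a frieze $A$ of width $w$ with $p_w(A)=B$. We will use the combinatorial model of $\operatorname{Frieze}(w)/\sim_{\mathbf{Y}}$ from Section \ref{Y-equivalent}. The second row of the frieze attached to a triangulation has entries $m_im_{i+1}-1$, so $p_w$ reads off these products along consecutive vertices.

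The key steps, in order, are the following.

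\begin{enumerate}[label=(\arabic*)]
\item \emph{Fixed rows and symmetry.} Since $B$ satisfies $WE=(1+N)(1+S)$ with positive integer entries, we first fix the normalisation of the boundary rows. We then show that $B$ is periodic under a glide reflection of period $n$, exactly as for classical friezes. For this we will use the standard fact that $\mathbf{Y}$-systems of type $A_w$ are periodic, so that positivity plus the recurrence forces the pattern to be determined by its first row $b_1,\dots,b_n$.
\item \emph{A local minimum.} Mimicking Conway--Coxeter's key lemma that every frieze quiddity has an entry $1$ (an ear of the triangulation), we show that the first row of $B$ contains a ``small'' configuration. The target is two consecutive entries $b_i,b_{i+1}$ whose values are forced to equal those of an ear, namely $b_i=m_{i-1}\cdot 1-1$ and $b_{i+1}=1\cdot m_{i+1}-1$. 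To find such a pair, we use the $\mathbf{Y}$-diamond rule on the second row: $b_i'b_{i+1}'=(1+b_i)(1+b_{i+1}')\cdots$. Integrality then forces divisibility relations, and a global sum or height argument shows that some entry of the second row equals the corresponding entry of the first (or some analogous minimal relation).
\item \emph{Reduction of width.} Given such a position, we define a $\mathbf{Y}$-frieze $B'$ of width $w-1$ by ``cutting an ear''. Concretely, we delete one diagonal of $B$ and replace the two neighbouring first-row entries by values computed from the identity $(m_{i-1}-1)m_{i+1}-1$ relations. We verify that $B'$ again satisfies the $\mathbf{Y}$-diamond rule with positive integer entries. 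This can be done directly through the identification of entries of $B$ with expressions $(1+\cdot)$ in minors, or by checking the rule only on the first two rows and invoking the uniqueness from step (1).
\item \emph{Induction and lifting.} By induction, $B'=p_{w-1}(A')$ for some frieze $A'$ corresponding to a triangulation $T'$ of an $(n-1)$-gon. We then glue a triangle to $T'$ to get $T$ and its frieze $A$. Checking that $p_w(A)$ agrees with $B$ on the first row suffices by step (1). The base case $w=0,1$ is by direct inspection.
\end{enumerate}

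The main obstacle is step (2): proving that an arbitrary positive integral $\mathbf{Y}$-frieze must contain the ear-type configuration. Unlike the classical case, the second row of a frieze does not determine the quiddity, which is precisely why $p_w$ fails to be injective. We will therefore have to deal with the genuine ambiguity, for example a $1+b_i$ that factors in several ways. First, we would try to pick the minimal entry of the first row and show, using the diamond rule of row two and positivity of row three, that $1+b_i$ divides suitable neighbours. This should yield a factorization $1+b_i=m_im_{i+1}$ compatible along the whole row, with some $m_j=1$ and $m_{j\pm1}$ non-adjacent to further $1$'s.
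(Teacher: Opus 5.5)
\textbf{There is a genuine gap.} Your outline follows the Conway--Coxeter template, but the steps that carry all the difficulty are left open, and step (2) aims at a target that cannot be detected from $B$.

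Step (2) is essentially the whole theorem. The configuration you want (``$b_i=m_{i-1}-1$, $b_{i+1}=m_{i+1}-1$ with $m_i=1$'') is stated in terms of the quiddity $m$ of a preimage, and that is exactly what is unknown. Even when a preimage exists, $B$ does not determine it, so ears are not visible in $B$: $(1,2,1,2)$ and $(2,1,2,1)$ both map to $(1,1,1,1)$, with ears in complementary positions. Your fallback, a factorisation $1+b_i=m_im_{i+1}$ compatible along the whole row with some $m_j=1$, restates the conclusion. The ``global sum or height argument'' is never specified.

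The missing idea is a reducible configuration expressed in $B$ alone whose occurrence can be forced. The paper uses small entries of the first row of $B$. Proposition \ref{can cut} shows that this row contains a $1$, a block $(x,2,2,y)$ with $x$ or $y\not\equiv 2 \pmod 3$, or $(2,2,2)$. The proof uses the diagonal recurrence (Proposition \ref{y-diagonals}), the growth estimates of Lemma \ref{growth} and the divisibility of Proposition \ref{divisibility}. These configurations are detectable because $b_j=1$ forces $\{m_j,m_{j+1}\}=\{1,2\}$, and similarly for the blocks of $2$'s. They correspond to removing a quadrilateral, pentagon or hexagon, not a single triangle; an ear has no comparable intrinsic signature.

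Steps (3) and (4) also fail as stated.

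In (3), the replacement must be written purely in terms of $B$, namely $(b_{j-2},b_{j-1},b_j,b_{j+1})\mapsto\bigl(\tfrac{(b_{j-2}+1)b_{j-1}}{b_{j-1}+1}-1,\ b_{j-1}b_j-1,\ \tfrac{(b_{j+1}+1)b_j}{b_j+1}-1\bigr)$. Checking the rule on two rows and invoking uniqueness of propagation says nothing about whether the propagated array closes up at width $w-1$ with positive integer entries. The paper needs explicit closed formulas for the new diagonal entries and separate integrality arguments: Proposition \ref{diagonal-divisibility}, and for the $3$-cut the mod $3$ hypothesis.

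In (4), the induction hypothesis gives you some $A'$ with $p_{w-1}(A')=B'$. But $B'$ only fixes the product $m'_{j-1}m'_{j+1}=b_{j-1}b_j$, whereas gluing a triangle reproduces $B$ only if $m'_{j-1}=b_{j-1}$ and $m'_{j+1}=b_j$ individually. When $w-1$ is odd, $B'$ may have two preimages and you may be handed the wrong one. When $w-1$ is even, you must still show that the unique preimage satisfies this. So ``checking that $p_w(A)$ agrees with $B$'' is a claim that can fail, not a routine verification. Compare Examples \ref{Y-2-bad_choice} and \ref{Y-2-bad_choice2}, where no glue on the bad preimage recovers $B$.

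The paper resolves this in two cases:
\begin{itemize}
\item When $B'$ has odd width, it builds two candidate quiddities $(c_i)$, $(d_i)$ directly from $B$. It then uses Propositions \ref{product}, \ref{divisibility} and \ref{ratio-classification} to show that one of them is integral and is a genuine frieze quiddity.
\item When $B'$ has even width, it shows $q\in\{\tfrac12,1,2\}$ and excludes $q=1$ by irrationality.
\end{itemize}
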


\begin{maintheorem}{B}[Corollary \ref{main-enumeration}]\label{intro_number}
Let $\lvert\operatorname{YFrieze}(w)\rvert$ denote the number of $\mathbf{Y}$-frieze patterns of width $w$. Then we have:

\[
\lvert\operatorname{YFrieze}(w)\rvert = \begin{cases}
C_{w+1}^{(2)},
&\text{if $w$ is even};\\[1.5em]
C_{w+1}^{(2)}-C_{\frac{w+1}{2}}^{(3)},
&\text{if $w\equiv1\text{ }(\operatorname{mod}4)$};\\[1.5em]
C_{w+1}^{(2)}-C_{\frac{w+1}{2}}^{(3)}-C_{\frac{w+1}{4}}^{(5)},
&\text{if $w\equiv3\text{ }(\operatorname{mod}4)$}.
\end{cases}
\]

\noindent where $C_{p}^{(q)}:=\frac{1}{(q-1)p+1}{qp\choose p}$ are the Fuss-Catalan numbers.
\end{maintheorem}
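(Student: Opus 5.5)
Theorem~B follows from Theorem~A together with a combinatorial model of $\operatorname{Frieze}(w)/\sim_{\mathbf{Y}}$. By Theorem~A, $p_w$ is surjective, so $\lvert\operatorname{YFrieze}(w)\rvert$ equals the number of $\mathbf{Y}$-equivalence classes of Conway--Coxeter friezes of width $w$. By Conway--Coxeter these friezes correspond to triangulations of a $(w+3)$-gon. The task is therefore to say when two triangulations give the same second row, and to count the classes.

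\textbf{Step 1: identify the classes.} The second non-trivial row of a frieze records, for each vertex $i$, the entry $m_{i-1,i+2}$, the number attached to the diagonal skipping two vertices. Its value is determined by the local configuration of the triangulation near vertices $i$ and $i+1$. The natural guess is that $\mathbf{Y}$-equivalence is generated by a local move: flipping a diagonal inside a quadrilateral that is attached to the polygon along enough boundary edges, so that no second-row entry changes. I would aim to show that each class contains a canonical representative, namely a triangulation with a distinguished ``pairing'' structure. The cleanest candidate is to glue triangles in pairs into quadrilaterals. This gives quadrangulations of a $(w+3)$-gon, or quadrangulations of a $(2(w+1)+2)$-gon after a doubling, and these are counted by $C^{(2)}_{w+1}$ for the right normalisation. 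The Fuss--Catalan number $C^{(2)}_{w+1}=\frac{1}{w+2}\binom{2w+2}{w+1}$ is the ordinary Catalan number, which counts triangulations of a $(w+3)$-gon. So for $w$ even the claim is simply that $p_w$ is injective on triangulations. For $w$ odd we must subtract the collisions.

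\textbf{Step 2: the odd case.} Redo the count as all triangulations minus the overcount from non-trivial $\mathbf{Y}$-equivalences. The subtracted terms $C^{(3)}_{(w+1)/2}$ and $C^{(5)}_{(w+1)/4}$ count $4$-angulations, respectively $8$-angulations, of polygons on roughly $w+3$ vertices. I would show the following:
\begin{itemize}
\item non-injectivity happens only when the triangulation is built from quadrilaterals (each with a diagonal chosen) arranged so that all diagonals can be flipped simultaneously without changing the second row;
\item the fibres have size $2$ exactly when the underlying structure is a quadrangulation, which needs $w+1$ even;
\item there is an extra correction term counted by octangulations when $4\mid w+1$, coming from nested coincidences, so that inclusion--exclusion yields the $C^{(5)}$ term.
\end{itemize}
Concretely, I would prove a lemma of the form: $p_w(T)=p_w(T')$ with $T\neq T'$ if and only if $T'$ is obtained from $T$ by a global involution, flipping every diagonal of an underlying quadrangulation. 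I would then count the fixed and free orbits.

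\textbf{Step 3: the main obstacle.} The hardest step is the classification lemma describing exactly when two triangulations have the same second row. This will likely follow the paper's combinatorial model of $\operatorname{Frieze}(w)/\sim_{\mathbf{Y}}$ from Section~\ref{Y-equivalent}, which I would take as given. The remaining work is bijective enumeration: matching equivalence classes of size $2$ with quadrangulations and correcting by octangulations. I would verify the formula for small $w$ ($w=1,3,5$) to pin down the exact normalisations of the polygons being counted.
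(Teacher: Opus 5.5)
Your overall reduction matches the paper's: by Theorem A, $\lvert\operatorname{YFrieze}(w)\rvert$ is the number of fibres of $p_w$ on $\operatorname{Frieze}(w)$. The even case is just injectivity (Proposition \ref{distinct-implies-odd}), and for odd $w$ one subtracts the collisions. The gap is in your description of the collisions: it is incomplete, and it misidentifies the second subtracted term.

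Your proposed lemma says that $T\neq T'$ have the same image if and only if they differ by flipping every diagonal of an underlying quadrangulation. This is already false for $w=3$. The two triangulations of the hexagon containing a central triangle have quiddity cycles $(1,3,1,3,1,3)$ and $(3,1,3,1,3,1)$, and both map to $(2,2,2,2,2,2)$. Neither contains a long diagonal, so there is no underlying quadrangulation. Your lemma would therefore give $14-3=11$ instead of the correct $14-3-1=10$.

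Moreover, $C^{(5)}_{(w+1)/4}$ does not count octangulations. In general $\lvert\mathcal{P}_{n,k}\rvert=C^{(k-1)}_{(n-2)/(k-2)}$, so $C^{(5)}_{(w+1)/4}=\lvert\mathcal{P}_{w+3,6}\rvert$ counts dissections of the $(w+3)$-gon into hexagons. Octagons would give $C^{(7)}_{(w+1)/6}$, which does not fit the condition $4\mid w+1$.

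The missing idea is the alternating scalar factor. If $p_w(A)=p_w(\overline{A})$, then $a_ia_{i+1}=\overline{a}_i\overline{a}_{i+1}$ for all $i$, so $\overline{A}:A$ is a well-defined constant. By Corollary \ref{Cuntz-Holm} it lies either in $\{\frac12,2\}$, forced by a $(1,2)$ or $(2,1)$ in the quiddity, or in $\{\frac13,3\}$, forced by a $(1,3,1)$ and in fact a $(1,3,1,3)$. The same reasoning shows each fibre has at most two elements (Proposition \ref{distinct-bound}).

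The two families of pairs are then as follows (Theorem \ref{equivalence}):
\begin{enumerate}
\item The factor-$2$ pairs are exactly $\{P_1,P_2\}$ for $P\in\mathcal{P}_{w+3,4}$. Every quadrilateral receives its diagonal at its odd vertices in $P_1$ and at its even vertices in $P_2$. This is not an arbitrary simultaneous flip: in the hexagon, $(3,1,2,3,1,2)$ and $(3,2,1,3,2,1)$ differ by flipping both diagonals of a quadrangulation, yet they have different second rows.
\item The factor-$3$ pairs are exactly $\{P_1,P_2\}$ for $P\in\mathcal{P}_{w+3,6}$, with each hexagon receiving one of its two central-triangle triangulations.
\end{enumerate}

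Since fibres have size at most two and a pair's scalar factor is unique, the two families are disjoint. There is no inclusion--exclusion and no ``nested coincidence'' correction. The count is simply $C^{(2)}_{w+1}-\lvert\mathcal{P}_{w+3,4}\rvert-\lvert\mathcal{P}_{w+3,6}\rvert$, with both terms subtracted, which gives the stated formula.
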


An independent proof of Theorem \ref{intro_main} has been obtained by Ian Short and Andrei Zabolotskii \cite{short2026all}. Interestingly, our two proofs possess rather different flavours. Indeed, in this paper we follow an inductive argument akin to Conway-Coxeter's original proof regarding the correspondence between friezes and triangulated polygons \cite{CC}, whereas Short and Zabolotskii avoid such induction. Instead, given a $\mathbf{Y}$-frieze $B$, they directly produce a rational frieze $A$ with the property that $p_w(A) = B$, and further argue that, where necessary, $A$ can be rescaled to yield a genuine integral frieze satisfying the same property.

\subsection*{Acknowledgments}
We are grateful to the organisers of the conference `Farey's Legacy in Frieze Patterns and Discrete Geometry', held at the ICMS in Edinburgh during April 2026, for providing such a stimulating environment for mathematical exchange. In particular, we thank Michael Cuntz, Anna Felikson, Antoine de Saint Germain, Ian Short, Pavel Tumarkin, Katie Waddle and Andrei Zabolotskii for many helpful and enjoyable discussions. It was during this conference that we discovered Short and Zabolotskii had independently been working on the same problem concerning de Saint Germain's $p$-map conjecture. We are particularly appreciative of their willingness to share a preliminary version of their proof.

\section{Preliminaries and definitions}\label{definitions}


\begin{definition}[Frieze patterns]
    Let $w \in \mathbb{Z}_{\geq-1}$. A \textbf{\emph{frieze pattern}} $A$ of width $w$ is an integer array $(a_{i,j})_{i,j\in\mathbb{Z},-3\leq j-i\leq w+2}$ such that the following three conditions hold.
    \begin{enumerate}[label=\normalfont(\arabic*)]
        \item For all $i\in\mathbb{Z}$ we have: $$a_{i,i-3}=a_{i,i+w+2}=-1, \quad a_{i,i-1}=a_{i,i+w}=1, \quad \text{and} \quad a_{i,i-2}=a_{i,i+w+1}=0.$$
        \item For all $i,j\in\mathbb{Z}$ such that $0\leq j-i\leq w-1$, $a_{i,j}\in\mathbb{Z}^+$.
        \item For all $i,j\in\mathbb{Z}$ such that $-2\leq j-i\leq w+1$, the \textbf{diamond rule} holds: $$a_{i,j}a_{i+1,j+1}=1+a_{i,j+1}a_{i+1,j}.$$
    \end{enumerate}
    The set of all frieze patterns of width $w$ is denoted by $\operatorname{Frieze}(w)$.
\end{definition}

A frieze pattern $A$ of width $w$ can be visualised as $w+6$ staggered rows of entries: \vspace{5mm}

\adjustbox{scale=0.8}{$\begin{matrix}
    \cdots&&-1&&-1&&-1&&-1&&\cdots\\
    &\cdots&&0&&0&&0&&0&&\cdots\\
    &&\cdots&&1&&1&&1&&1&&\cdots\\
    &&&\cdots&&a_{1,1}&&a_{2,2}&&a_{3,3}&&a_{4,4}&&\cdots\\
    &&&&\cdots&&a_{1,2}&&a_{2,3}&&a_{3,4}&&a_{4,5}&&\cdots\\
    &&&&&\ddots&&\ddots&&\ddots&&\ddots&&\ddots&&\ddots\\
    &&&&&&\cdots&&a_{1,w}&&a_{2,w+1}&&a_{3,w+2}&&a_{4,w+3}&&\cdots\\
    &&&&&&&\cdots&&1&&1&&1&&1&&\cdots\\
    &&&&&&&&\cdots&&0&&0&&0&&0&&\cdots\\
    &&&&&&&&&\cdots&&-1&&-1&&-1&&-1&&\cdots
\end{matrix}$}\vspace{5mm}

The rows other than the top $3$ rows and the bottom $3$ rows are called \textbf{nontrivial} rows. Since the first non-trivial row determines the entire frieze pattern, we call it the \textbf{quiddity sequence} of $A$. Furthermore, as shown in Section 2 of \cite{CC}, each frieze pattern of width $w$ has (translational) period $w+3$. Consequently, instead of considering the entire quiddity sequence, it is often convenient to consider the \textbf{quiddity cycle} $q(A):= (a_{1,1}, a_{2,2}, \ldots, a_{w+3,w+3})$.

In the same paper, Conway and Coxeter made the striking observation that frieze patterns of width $w$ are completely classified by the triangulations of a $(w+3)$-gon. Their result is encapsulated in the following theorem.

\begin{theorem}[Section 3 in \cite{CC}]
\label{Conway-Coxeter-thm}

For $w \in \mathbb{Z}_{\geq 1}$, let $P$ be a $(w+3)$-gon whose vertices have been cyclically ordered from $1,\ldots, w+3$. Furthermore, let $T$ be a triangulation of $P$, and for each vertex $i$ of $P$ we define $a_i(T)$ to be the number of triangles in $T$ adjacent to $i$.

Then the following map is a well-defined bijection:

\[
\begin{array}{c@{\quad}c@{\quad}l}
\phi :\quad
\left\{
\begin{gathered}
\text{Triangulations of \hspace{0.1mm}}\\[-2pt]
\text{a $(w+3)$-gon}
\end{gathered}
\right\}
&
\longrightarrow
&
\operatorname{Frieze}(w)
\\[8pt]
\hspace{10mm} T
&
\longmapsto
&
\begin{gathered}
{\scriptstyle A \in \operatorname{Frieze}(w)\text{ such that}}\\[-2pt]
{\scriptstyle q(A)=\left(a_1(T),\ldots,a_{w+3}(T)\right).}
\end{gathered}
\end{array}
\]

\end{theorem}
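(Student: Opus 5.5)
We follow Conway and Coxeter's original strategy: induction on $w$, with the induction step carried out by \emph{cutting off} a triangle (ear) on the polygon side and a degree-one vertex on the frieze side.

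\textbf{Well-definedness.} Let $T$ be a triangulation of a $(w+3)$-gon. We first show that there is a frieze pattern of width $w$ whose quiddity cycle is $(a_1(T),\dots,a_{w+3}(T))$.
\begin{itemize}
\item The natural candidate sets $a_{i,j}$ equal to the number of paths, or ``diagonal matching numbers''. Equivalently, $a_{i,j}$ is the entry of the product of matrices $\begin{pmatrix} a_k & -1\\ 1 & 0\end{pmatrix}$ for $k=i,\dots,j$.
\item The diamond rule then holds automatically, by the determinant identity for continuants.
\item The real content is the boundary condition $a_{i,i+w}=1$, $a_{i,i+w+1}=0$. This is equivalent to the product of all $w+3$ matrices being $-\mathrm{Id}$.
\item We prove this by induction, removing an ear of $T$ at a vertex $i$ with $a_i(T)=1$. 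This replaces the pattern $(\dots,a_{i-1},1,a_{i+1},\dots)$ by $(\dots,a_{i-1}-1,a_{i+1}-1,\dots)$.
\item A direct $2\times 2$ computation shows that these two local matrix products agree up to the appropriate identity, so the total product is unchanged.
\item The base case is the triangle, $w=0$, with quiddity $(1,1,1)$.
\end{itemize}

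\textbf{Positivity.} We need $a_{i,j}>0$ in the interior rows. Here we use the combinatorial interpretation: $a_{i,j}$ counts the ways of assigning distinct triangles of $T$ to the vertices $i,\dots,j$ with each vertex incident to its triangle (Broline–Crowe–Isaacs), or we prove positivity inductively through the ear-removal.

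\textbf{Injectivity.} This is immediate: a frieze is determined by its quiddity sequence, and a triangulation is determined by its vertex degrees. The latter follows by induction, since a vertex with $a_i=1$ locates an ear, which we remove, and the degrees of the smaller triangulation are determined.

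\textbf{Surjectivity (main obstacle).} We show that every frieze $A$ of width $w\ge1$ has some quiddity entry equal to $1$, and that reducing at that entry yields a frieze of width $w-1$.
\begin{itemize}
\item For the existence of a $1$, we use the second row. By the diamond rule, $a_{i,i}a_{i+1,i+1}=1+a_{i,i+1}$, so $a_{i,i+1}=a_{i,i}a_{i+1,i+1}-1$.
\item Along the row $j-i=1$, positivity and integrality plus periodicity give, via the usual Conway–Coxeter argument, an entry equal to $1$ in the first row. The argument is that the minimal entry of a nontrivial row is forced down until row $w$ consists of $1$'s.
\item Concretely, we consider a position $(i,j)$ in row $w-1$. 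There $a_{i,j}a_{i+1,j+1}=1+a_{i,j+1}a_{i+1,j}=1+1\cdot a_{i+1,j}$, and we propagate this using the glide symmetry $a_{i,j}=a_{j+1,i+w+2}$. This symmetry follows from periodicity, and it identifies the last nontrivial row with the first.
\end{itemize}

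Given $a_i=1$, we define the reduced cycle and check that it again gives positive integer entries in width $w-1$. The reduced frieze is obtained from $A$ by deleting the diagonals through $i$, and its diamond rules come from the corresponding Ptolemy-type identities. By induction it corresponds to a triangulation $T'$, and gluing an ear to $T'$ at the edge $\{i-1,i+1\}$ gives $T$ with $\phi(T)=A$.

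The delicate step is guaranteeing that some quiddity entry equals $1$, and that the reduction preserves positivity. We handle both by expressing the entries of the reduced frieze as entries of $A$ lying away from the deleted diagonals.
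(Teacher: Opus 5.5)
The paper gives no proof of this statement; it is quoted from Conway--Coxeter. Your plan (ear removal on the polygon side, deleting a quiddity entry $1$ on the frieze side) is their inductive strategy. Well-definedness and injectivity are fine in outline, and your identity $M(a)M(1)M(b)=M(a-1)M(b-1)$, with $M(x)=\begin{pmatrix}x&-1\\1&0\end{pmatrix}$, is correct.

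The genuine gap is the step you flag yourself: you never show that an arbitrary frieze of width $w\ge 1$ has a $1$ in its quiddity, and the argument you sketch cannot produce one. Your glide symmetry $a_{i,j}=a_{j+1,i+w+2}$ is off by one, since it sends the row $j-i=-1$ of $1$'s to the row of $-1$'s. The correct one is $a_{i,j}=a_{j+2,i+w+1}$. It also does not follow from periodicity: Conway--Coxeter prove it first and obtain periodicity as its square. With the correct symmetry, the row $j-i=w-1$ is the quiddity row and the row $j-i=w-2$ is the second row. Your relation $a_{i,i+w-1}a_{i+1,i+w}=1+a_{i+1,i+w-1}$ then becomes $a_{i-2}a_{i-1}=1+(a_{i-2}a_{i-1}-1)$, a tautology: the bottom diamonds are glide images of the top ones and carry no information. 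The remark that the minimal entry is ``forced down'' is not an argument.

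The missing ingredient is the linear recurrence along diagonals. Subtracting the diamond rules at $(i,j-1)$ and $(i,j)$ gives $a_{i,j}(a_{i+1,j+1}+a_{i+1,j-1})=a_{i+1,j}(a_{i,j+1}+a_{i,j-1})$. So $(a_{i,j-1}+a_{i,j+1})/a_{i,j}$ is independent of $i$ while $0\le j-i\le w-1$, and evaluating at $i=j$ gives $a_{i,j+1}=a_{j+1}a_{i,j}-a_{i,j-1}$. If every $a_k\ge 2$, this yields $a_{i,j+1}-a_{i,j}\ge a_{i,j}-a_{i,j-1}\ge\cdots\ge a_{i,i-1}-a_{i,i-2}=1$. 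Every diagonal then strictly increases and $a_{i,i+w}>1$, a contradiction. This is precisely the growth argument the paper runs for $\mathbf{Y}$-friezes in Lemma~\ref{growth} and Proposition~\ref{can cut}.

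The same recurrence shows that the entries of an arbitrary frieze are continuants of its quiddity, which is what your reduction step really needs. The ``Ptolemy-type identities'' you invoke are otherwise unproved for a frieze not yet known to come from a polygon. With the recurrence, your matrix identity shows the reduced quiddity still has product $-\mathrm{Id}$ over a period. Let $K(x_1,\dots,x_m)$ denote the top-left entry of $M(x_1)\cdots M(x_m)$. Linearity of continuants in their end entries, e.g. $K(a_{i+1}-1,a_{i+2},\dots,a_j)=K(1,a_{i+1},\dots,a_j)=a_{i,j}$, shows every nontrivial entry of the reduced frieze is a nontrivial entry of $A$, hence a positive integer.
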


\begin{example}
Let us consider the case when $w=3$. Then for the triangulation $T$ of the labelled hexagon shown in Figure \ref{Fig-Conway-Coxeter-thm} we have $\left(a_1(T),\ldots,a_{w+3}(T)\right) = (4,1,2,2,2,1)$. Consequently, Theorem \ref{Conway-Coxeter-thm} guarantees there is a unique frieze pattern $A$ of width $3$ whose quiddity cycle is $(4,1,2,2,2,1)$. We depict this frieze pattern on the right of Figure \ref{Fig-Conway-Coxeter-thm}.
\end{example}

\begin{figure}
\label{Fig-Conway-Coxeter-thm}
\[
\begin{tikzpicture}[baseline=(current bounding box.center)]

\begin{scope}[xshift=-5.5cm]
  \draw (0,0) circle (1cm);

  \draw (30:1) to (270:1);
  \draw (30:1) to (210:1);
  \draw (30:1) to (150:1);

  \fill (30:1) circle (2pt);
  \fill (90:1) circle (2pt);
  \fill (150:1) circle (2pt);
  \fill (210:1) circle (2pt);
  \fill (270:1) circle (2pt);
  \fill (330:1) circle (2pt);

  \node at (90:1.25)  {$6$};
  \node at (30:1.25)  {$1$};
  \node at (330:1.25) {$2$};
  \node at (270:1.25) {$3$};
  \node at (210:1.25) {$4$};
  \node at (150:1.25) {$5$};
\end{scope}

\begin{scope}[xshift=-4.7cm]
  \node at (1.5,0) {\Large$\overset{\phi}{\longmapsto}$};
\end{scope}

\begin{scope}[xshift=3.5cm]
  \matrix (frieze) [
    matrix of math nodes,
    row sep=.3em,
    column sep=.5em,
    row 2/.style={xshift=1.5em},
    row 4/.style={xshift=1.5em},
    row 6/.style={xshift=1.5em}
  ] {
    \cdots \hspace{-5mm} & 0 & 0 & 0 & 0 & 0 & 0 & 0 & 0 & \hspace{-5mm} \cdots \\
    \cdots \hspace{-5mm} & 1 & 1 & 1 & 1 & 1 & 1 & 1 & 1 & \hspace{-5mm} \cdots \\
    \cdots \hspace{-5mm} & 1 & 4 & 1 & 2 & 2 & 2 & 1 & 4 & \hspace{-5mm} \cdots \\
    \cdots \hspace{-5mm} & 3 & 3 & 1 & 3 & 3 & 1 & 3 & 3 & \hspace{-5mm} \cdots \\
    \cdots \hspace{-5mm} & 2 & 2 & 2 & 1 & 4 & 1 & 2 & 2 & \hspace{-5mm} \cdots \\
    \cdots \hspace{-5mm} & 1 & 1 & 1 & 1 & 1 & 1 & 1 & 1 & \hspace{-5mm} \cdots \\
    \cdots \hspace{-5mm} & 0 & 0 & 0 & 0 & 0 & 0 & 0 & 0 & \hspace{-5mm} \cdots \\
  };

  \draw[red, thick, rounded corners]
    (frieze-3-3.north west) rectangle
    (frieze-3-8.south east);
\end{scope}

\end{tikzpicture}
\]
\caption{An illustration of Theorem \ref{Conway-Coxeter-thm} in action. Note that, for the given triangulation $T$ of the labelled hexagon on the left, the vertices $1,2,3,4,5,6$ are adjacent to $4,1,2,2,2,1$ triangles in $T$, respectively.
Therefore, Theorem \ref{Conway-Coxeter-thm} tells us that $\phi(T)$ is the unique frieze pattern $A$ such that $q(A) = (4,1,2,2,2,1)$.}
\end{figure}

\begin{definition}[$\mathbf{Y}$-frieze patterns]
    Let $w \in \mathbb{Z}_{\geq-1}$. A \emph{$\mathbf{Y}$-frieze pattern} $B$ of width $w$ is an integer array $(b_{i,j})_{i,j\in\mathbb{Z},-3\leq j-i\leq w+2}$ such that the following three conditions hold.
    \begin{enumerate}[label=\normalfont(\arabic*)]
        \item For all $i\in\mathbb{Z}$ we have: $$b_{i,i-3}=b_{i,i-1}=b_{i,i+w}=b_{i,i+w+2}=0,, \quad \text{and}  \quad b_{i,i-2}=b_{i,i+w+1}=-1.$$
        \item For all $i,j\in\mathbb{Z}$ such that $0\leq j-i\leq w-1$, $a_{i,j}\in\mathbb{Z}^+$.
        \item For all $i,j\in\mathbb{Z}$ such that $-2\leq j-i\leq w+1$, the \textbf{Y-diamond rule} holds: 
        $$b_{i,j}b_{i+1,j+1}=(1+b_{i,j+1})(1+b_{i+1,j}).$$
    \end{enumerate}
    The set of all $\mathbf{Y}$-frieze patterns of width $w$ is denoted by $\operatorname{YFrieze}(w)$.
\end{definition}

A $\mathbf{Y}$-frieze pattern $B$ of width $w$ can be visualised as $w+6$ staggered rows of entries: \vspace{5mm}

\adjustbox{scale=0.8}{$\begin{matrix}
    \cdots&&0&&0&&0&&0&&\cdots\\
    &\cdots&&-1&&-1&&-1&&-1&&\cdots\\
    &&\cdots&&0&&0&&0&&0&&\cdots\\
    &&&\cdots&&b_{1,1}&&b_{2,2}&&b_{3,3}&&b_{4,4}&&\cdots\\
    &&&&\cdots&&b_{1,2}&&b_{2,3}&&b_{3,4}&&b_{4,5}&&\cdots\\
    &&&&&\ddots&&\ddots&&\ddots&&\ddots&&\ddots&&\ddots\\
    &&&&&&\cdots&&b_{1,w}&&b_{2,w+1}&&b_{3,w+2}&&b_{4,w+3}&&\cdots\\
    &&&&&&&\cdots&&0&&0&&0&&0&&\cdots\\
    &&&&&&&&\cdots&&-1&&-1&&-1&&-1&&\cdots\\
    &&&&&&&&&\cdots&&0&&0&&0&&0&&\cdots
\end{matrix}$} \vspace{5mm}

The rows other than the top $3$ rows and the bottom $3$ rows are called nontrivial rows. Similar to frieze patterns, since the first non-trivial row determines the entire $\mathbf{Y}$-frieze pattern, we call the first non-trivial row the quiddity sequence of $B$. As with ordinary frieze patterns, in \cite{germain2023frieze} de Saint Germain showed that each $\mathbf{Y}$-frieze pattern of width $w$ has (translational) period $w+3$. Consequently, it similarly makes sense to define the \textbf{quiddity cycle} $q(B):= (b_{1,1}, b_{2,2}, \ldots, b_{w+3,w+3})$.

Furthermore, in the same paper, de Saint Germain constructed a map $p_w : \operatorname{Frieze}(w) \rightarrow \operatorname{YFrieze}(w)$ which sends $A\in\operatorname{Frieze}(w)$ to $B\in\operatorname{YFrieze}(w)$ such that $b_{i,i}=a_{i,i+1}$ for all integers $i$. In other words, the quiddity sequence of $B$ (which determines $B$) is the second non-trivial row of $A$.

The map $p_w$ is in general not injective, as we shall see in an example after the following definition:

\begin{definition}[$\mathbf{Y}$-equivalent frieze patterns]
Let $A$ and $\overline{A}$ be distinct frieze patterns such that $p_w(A)=p_w(\overline{A})$, then we say that they are $\mathbf{Y}$-equivalent, denoted by $A \sim_{\mathbf{Y}} \overline{A}$.
    
\end{definition} 

\begin{example}
The frieze patterns $A,\overline{A}$ of width $1$ with $q(A)=(1,2,1,2)$ and $q(\overline{A})=(2,1,2,1)$ are $\mathbf{Y}$-equivalent as their images under $p_1$ have the same quiddity cycle $(1,1,1,1)$.
    
\end{example}

On the other hand, de Saint Germain conjectured the following:

\begin{conjecture}[Conjecture in \cite{germain2023frieze}]
\label{conjecture}

The map $p_w$ is surjective.

\end{conjecture}

We conclude this section with a simple, yet remarkably useful, identity exhibited by $\mathbf{Y}$-friezes which helps verify Conjecture \ref{conjecture}. In particular, given a $\mathbf{Y}$-frieze $B$ of width $w$, the identity below plays a key role in the proofs of Propositions \ref{2-surjection},\ref{3-surjection} and \ref{4-surjection}, which ensures the well-definedness of the construction of a frieze pattern $A$ with the property that $p_w(A) = B$.

\begin{proposition}
\label{product}
Let $B$ be a $\mathbf{Y}$-frieze pattern of width $w \geq 1$ and suppose $A$ is a frieze pattern such that $p_w(A) = B$.

If we let $q(A) = (a_1,a_2,\ldots, a_{w+3})$ and $q(B) = (b_1,b_2,\ldots, b_{w+3})$ denote the quiddity cycles of $A$ and $B$ respectively then: \[
\displaystyle \prod_{i=1}^{w+3}(b_{i} +1)^{(-1)^{i+1}} = \begin{cases}
1,
&\text{if $w$ is odd};\\[1.5em]
a_1^2,
&\text{if $w$ is even}.
\end{cases}
\]
    
\end{proposition}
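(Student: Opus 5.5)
The plan is to rewrite each factor $b_i+1$ in terms of the quiddity cycle of $A$ and then observe that the alternating product telescopes. The key input is the diamond rule of $A$ applied to the diamond whose north entry is $a_{i+1,i}=1$ (from the fixed rows), whose west and east entries are $a_{i,i}$ and $a_{i+1,i+1}$, and whose south entry is $a_{i,i+1}$.

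First, by definition of $p_w$ we have $b_i=b_{i,i}=a_{i,i+1}$. The diamond rule with $j=i$ (allowed, since $j-i=0$ lies in the permitted range $-2\le j-i\le w+1$) gives
\[
a_{i,i}\,a_{i+1,i+1}=1+a_{i,i+1}\,a_{i+1,i}=1+a_{i,i+1},
\]
using $a_{i+1,i}=1$. Writing $a_i=a_{i,i}$, this yields $b_i+1=a_ia_{i+1}$ for every $i$. By the $(w+3)$-periodicity of frieze patterns from \cite{CC}, we also have $a_{w+4}=a_1$.

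Substituting into the product gives
\[
\prod_{i=1}^{w+3}(b_i+1)^{(-1)^{i+1}}=\prod_{i=1}^{w+3}(a_ia_{i+1})^{(-1)^{i+1}} .
\]
Each interior $a_k$ with $2\le k\le w+3$ appears exactly twice, once in the factor for $i=k-1$ and once in the factor for $i=k$. These two factors carry opposite exponents, so all interior terms cancel; all the $a_k$ are positive integers, so this is legitimate. What remains is $a_1^{+1}$ from $i=1$ together with $a_{w+4}^{(-1)^{w+4}}=a_1^{(-1)^{w}}$ from $i=w+3$.

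If $w$ is odd, the surviving exponent is $-1$ and the product equals $1$. If $w$ is even, it equals $a_1\cdot a_1=a_1^2$, as claimed. There is no real obstacle. The only points needing care are checking that the boundary diamond with $j=i$ is covered by the diamond rule, and invoking periodicity to identify $a_{w+4}$ with $a_1$. The hypothesis $w\ge 1$ ensures that the row $a_{i,i+1}$ is a genuine nontrivial row, so that $b_i$ is well defined as stated.
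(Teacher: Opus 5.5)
Your proof is correct and follows the paper's argument: you use $p_w(A)=B$ and the diamond rule to get $b_i+1=a_ia_{i+1}$, then let the alternating product telescope, with periodicity giving $a_{w+4}=a_1$, which leaves $1$ or $a_1^2$ according to the parity of $w$. One small quibble with your closing remark: for $w=1$ the entries $a_{i,i+1}=a_{i,i+w}$ are the fixed row of $1$'s rather than a nontrivial row of $A$, but this is harmless, since the diamond rule with $j=i$ still applies and the identity holds as stated.
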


\begin{proof}

Note that $p_w(A) = B$ immediately implies $b_i = a_ia_{i+1} -1$ for all $1\leq i \leq w+3$. Therefore, 

\[
\displaystyle \prod_{i=1}^{w+3}(b_{i} +1)^{(-1)^{i+1}} = \begin{cases}
\displaystyle \frac{(b_1+1)(b_3+1)\cdots(b_{w+2}+1)}
{(b_2+1)(b_4+1)\cdots(b_{w+3}+1)} = \frac{(a_1a_2)(a_3a_4)\cdots (a_{w+2}a_{w+3})}{(a_2a_3)(a_4a_5)\cdots (a_{w+3}a_{1})}
=1,
&\text{if $w$ is odd};\\[1.5em]
\displaystyle \displaystyle \frac{(b_1+1)(b_3+1)\cdots(b_{w+3}+1)}
{(b_2+1)(b_4+1)\cdots(b_{w+2}+1)} = \frac{(a_1a_2)(a_3a_4)\cdots (a_{w+3}a_{1})}{(a_2a_3)(a_4a_5)\cdots (a_{w+2}a_{w+3})}
=a_1^2,
&\text{if $w$ is even}.
\end{cases}
\]

\end{proof}

\section{$\mathbf{Y}$-equivalent frieze patterns}\label{Y-equivalent}

The following propositions regarding the relationship between width and $\mathbf{Y}$-equivalence of frieze patterns were stated in \cite{germain2023frieze} and \cite{dSG2}, and we shall provide an explicit proof for each of them:

\begin{proposition}[Remark 1.18 in \cite{MG2}]
\label{distinct-implies-odd}
    Let $A$ and $\overline{A}$ be distinct frieze patterns such that $A\sim_\mathbf{Y} \overline{A}$, then $A$ and $\overline{A}$ must be of odd width.
\end{proposition}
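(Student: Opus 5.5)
Suppose $A\sim_{\mathbf{Y}}\overline{A}$ with quiddity cycles $q(A)=(a_1,\ldots,a_{w+3})$ and $q(\overline{A})=(\overline{a}_1,\ldots,\overline{a}_{w+3})$. We prove that if $w$ is even then $A=\overline{A}$, which is the contrapositive of the statement. Since a frieze pattern is determined by its quiddity sequence, it is enough to show $a_i=\overline{a}_i$ for all $i$.

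First, $p_w(A)=p_w(\overline{A})=B$ means that the second non-trivial rows agree. As in the proof of Proposition \ref{product}, the diamond rule applied to the diamond with top $1$ and bottom $a_{i,i+1}$ gives $a_ia_{i+1}=1+a_{i,i+1}$. Hence
\[
a_ia_{i+1}=b_i+1=\overline{a}_i\overline{a}_{i+1}\qquad\text{for all } i.
\]

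Next, for even $w$ we apply Proposition \ref{product} to both $A$ and $\overline{A}$, which both map to $B$. Both $a_1^2$ and $\overline{a}_1^2$ equal the same product $\prod_{i=1}^{w+3}(b_i+1)^{(-1)^{i+1}}$, so $a_1^2=\overline{a}_1^2$. Since all entries are positive integers, this gives $a_1=\overline{a}_1$. The relation $a_1a_2=\overline{a}_1\overline{a}_2$ then gives $a_2=\overline{a}_2$, and by induction along the cycle we get $a_i=\overline{a}_i$ for every $i$. So $A=\overline{A}$. The same argument applies with any starting index, since the cycle can be rotated.

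The conceptual point is that the cycle length $w+3$ is odd when $w$ is even. Consequently the alternating product of the constraints $a_ia_{i+1}=b_i+1$ around the cycle fixes $a_1^2$, whereas for odd $w$ it only gives the trivial identity $1=1$. Because the key work is already in Proposition \ref{product}, the only care needed is to record that $b_i+1=a_ia_{i+1}$ comes from the diamond rule with $a_{i,i-1}=1$, and to check the cases $w\ge 1$ so that Proposition \ref{product} applies. For $w\in\{-1,0\}$ the statement is trivial, because $\operatorname{Frieze}(w)$ is a singleton (for $w=0$, the unique triangulation of the triangle).
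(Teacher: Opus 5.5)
Your proof is correct and is essentially the paper's argument. The paper telescopes $a_i/\overline{a}_i=\overline{a}_{i+1}/a_{i+1}=\cdots=\overline{a}_{i+w+3}/a_{i+w+3}=\overline{a}_i/a_i$ around the cycle of odd length $w+3$ to force $a_i=\overline{a}_i$; this is the same odd-cycle alternating-product identity you use through Proposition \ref{product} to get $a_1^2=\overline{a}_1^2$, and your separate treatment of $w=0$, where Proposition \ref{product} does not formally apply, is a small tidy addition.
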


\begin{proof}
    Suppose $A$ and $\overline{A}$ are $\mathbf{Y}$-equivalent frieze patterns of even width $w$, with first row given by $(a_i)$ and $(\overline{a}_i)$, respectively. Then by the diamond rule and the equality of the second row of $A$ and $\overline{A}$, for all $i$ we have \begin{align}
    \label{ratioequality1}    
    a_ia_{i+1}=\overline{a}_i\overline{a}_{i+1}.
    \end{align}

    For a contradiction, suppose $n$ is even. Rearranging equation (\ref{ratioequality1}), for all $i$ we see 
    \begin{align}
    \label{ratioequality2}
    \frac{a_i}{\overline{a}_i}=\frac{\overline{a}_{i+1}}{a_{i+1}}=\dots=\frac{\overline{a}_{i+w+3}}{a_{i+w+3}}=\frac{\overline{a}_i}{a_i}
    \end{align}
    where the final inequality follows from the $w+3$ periodicity of the frieze patterns. Consequently, since $a_i$ and $\overline{a}_i$ are positive, we must have $a_i=\overline{a}_i$ for all $i$. Therefore, distinct frieze patterns that are $\mathbf{Y}$-equivalent must be of odd width.
\end{proof}

\begin{definition}
    Let $(a_i)_{i\in \mathbb{Z}}$ and $(\overline{a}_i)_{i\in \mathbb{Z}}$ be two integer sequences. We say $(\overline{a}_i)$ is related to $(a_i)$ by an \textbf{alternating scalar factor} $q \in \mathbb{Q}$ (denoted $(\overline{a}_i):(a_i)=q$) if for all $k\in \mathbb{Z}$ $$\text{$q \cdot a_{2k+1} = \overline{a}_{2k+1}$ \ \ and \ \  $a_{2k} = q \cdot \overline{a}_{2k}$.}$$
    Similarly, if $A$ and $\overline{A}$ are frieze patterns, we say that $\overline{A}$ is related to $A$ by an alternating scalar factor $q\in\mathbb{Q}$ (denoted $\overline{A}:A=q$) if the first row of $\overline{A}$ is related to the first row of $A$ by an alternating scalar factor $q$.
\end{definition}

\begin{proposition}
\label{distinct-bound}
    Let $A$ be a frieze pattern of odd width, then there is at most one frieze pattern $\overline{A}$ distinct from $A$ such that $A\sim_\mathbf{Y} \overline{A}$.
\end{proposition}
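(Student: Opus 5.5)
Let $A$ have odd width $w$ and quiddity sequence $(a_i)_{i\in\mathbb{Z}}$. Suppose $\overline{A}$ is any frieze pattern with $A\sim_{\mathbf{Y}}\overline{A}$, and let its quiddity sequence be $(\overline{a}_i)$. The plan is to show that $\overline{A}$ is forced to be related to $A$ by an alternating scalar factor. I will then show that this factor can take at most one value other than $1$.

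First, exactly as in the proof of Proposition \ref{distinct-implies-odd}, the equality of the second nontrivial rows gives
\[
a_ia_{i+1}=\overline{a}_i\overline{a}_{i+1}\quad\text{for all } i .
\]
Put $q:=\overline{a}_1/a_1\in\mathbb{Q}^+$. Induction in both directions along the sequence, using $\overline{a}_{i+1}/a_{i+1}=a_i/\overline{a}_i$, gives
\[
\overline{a}_{2k+1}=q\,a_{2k+1}\quad\text{and}\quad a_{2k}=q\,\overline{a}_{2k}\quad\text{for all } k .
\]
Hence $\overline{A}:A=q$. Since $w+3$ is even when $w$ is odd, this alternating pattern is compatible with $(w+3)$-periodicity, so no contradiction arises here, unlike in the even case. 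Because a frieze pattern is determined by its quiddity sequence, $\overline{A}$ is determined by $q$. Moreover $q=1$ exactly when $\overline{A}=A$.

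The main step is to show that at most one value $q\neq1$ yields a genuine frieze pattern. Set $Q(q):=(\dots, a_1q, a_2/q, a_3q, a_4/q,\dots)$. A positive $(w+3)$-periodic sequence is the quiddity sequence of a frieze of width $w$ precisely when the matrix product
\[
M(c)=\prod_{i=1}^{w+3}\begin{pmatrix} c_i & -1\\ 1 & 0\end{pmatrix}
\]
equals $-\mathrm{Id}$, since this encodes the closing conditions $a_{i,i+w}=1$, $a_{i,i+w+1}=0$, and so on. For an arbitrary $q$ this product is a matrix of Laurent polynomials in $q$. I would exploit the symmetry $q\mapsto 1/q$ together with a shift of indices by one to relate $M(Q(q))$ to $M(Q(1/q))$. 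I expect the entries to be "palindromic" Laurent polynomials, each a polynomial in $q+q^{-1}$ after a suitable normalisation.

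The key identity to establish is that some entry of $M(Q(q))+\mathrm{Id}$, after clearing denominators, is a degree-two polynomial in $q$ with root $q=1$ (since $A$ is a frieze) and root $q=q_0$ determined by the frieze. Then at most two values of $q$, namely $1$ and $q_0$, are possible, which proves the proposition.

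A cleaner route, which I would try first, avoids the transfer matrices. In a frieze the $\lambda$-lengths satisfy $a_i a_{i+1,i+w-1}\ldots$, so I would instead use the entries $a_{i,j}$ written as continuants in the quiddity. The last-row condition $a_{i,i+w-1}=1$ (the row just above the bottom row of $1$'s, via the diamond rule), written as a continuant in $a_i,\dots,a_{i+w-1}$ with the alternating substitution, becomes a Laurent polynomial $f(q)$. I then need to show that this continuant, as a function of $q$, has the form $\alpha q+\beta+\gamma q^{-1}$, or at least has a bounded number of positive roots.

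I expect this step to be the main obstacle: continuants under alternating scaling do not obviously collapse to degree one in $q$. The fallback is the matrix approach. Conjugate by $\mathrm{diag}(q,1)$ so that the alternating factors pair up, making each two-step matrix product of the form
\[
\begin{pmatrix} a_{2k-1}a_{2k}-1 & -a_{2k-1}q\\ a_{2k}/q & -1\end{pmatrix}.
\]
In this form the trace and the determinant conditions reduce to a single quadratic relation in $q$. Any positive root of this quadratic other than $1$ defines the unique possible $\overline{A}$.
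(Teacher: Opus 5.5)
Your first step is correct and is the same starting point as the paper: $a_ia_{i+1}=\overline{a}_i\overline{a}_{i+1}$ forces $\overline{A}:A=q$ with $q=\overline{a}_1/a_1$, so $\overline{A}$ is determined by $q$.

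The gap is the main step, and the mechanism you propose cannot work. Write $\mu(c)=\left(\begin{smallmatrix} c&-1\\ 1&0\end{smallmatrix}\right)$ and $D=\mathrm{diag}(q,1)$. Your two-step matrix is exactly $\mu(qa)\mu(b/q)=D\,\mu(a)\mu(b)\,D^{-1}$. Since $w+3$ is even, the full product telescopes to $M(Q(q))=D\,M(a)\,D^{-1}=-\mathrm{Id}$ for \emph{every} $q\neq 0$. So $M(Q(q))+\mathrm{Id}$ vanishes identically in $q$, trace and determinant carry no information, and there is no quadratic with roots $1$ and $q_0$.

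The continuant route fails for the same reason. Under the alternating substitution, continuants of even length are invariant, and those of odd length are multiplied by $q^{\pm1}$. Incidentally, $a_{i,i+w-1}$ is the last nontrivial row, not a row of $1$'s. The row of $1$'s is $a_{i,i+w}$, which has even length $w+1$ and is therefore invariant.

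Consequently, for every rational $q>0$ the sequence $Q(q)$ generates an array of positive rationals satisfying all boundary conditions and diamond rules. The only thing that can exclude a value of $q$ is integrality of the entries. Your argument never uses integrality, so no refinement of it can bound the number of admissible $q$.

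What is missing is an integrality argument at a suitable local configuration. The paper invokes Lemma 7.5 of Cuntz--Holm: the quiddity sequence of $A$ contains $(1,2)$, $(2,1)$ or $(1,3,1)$. For $w\geq 1$ this also follows by taking the ear at the end of a longest path in the dual tree of the triangulation. At such a position $j$, the product $a_ja_{j+1}\in\{2,3\}$ is prime. So the only positive integer solutions of $\overline{a}_j\overline{a}_{j+1}=a_ja_{j+1}$ are:
\begin{itemize}
\item $(a_j,a_{j+1})$ itself, which forces $q=1$ and $\overline{A}=A$;
\item the swap $(a_{j+1},a_j)$, which pins down $q$ (one of $2,\tfrac12,3,\tfrac13$, depending on the pattern and the parity of $j$) and hence $\overline{A}$.
\end{itemize}
Therefore at most one $\overline{A}\neq A$ can exist.
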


\begin{proof}
    By Lemma 7.5 of \cite{CH}, the first row of $A$, denoted by $(a_i)$, contains the subsequence $(1,2)$, $(2,1)$, or $(1,3,1)$.

    Suppose the first row of $A$ contains the subsequence $(1,2)$. Without loss of generality assume $(a_1,a_2)=(1,2)$. If there is a frieze pattern $\overline{A}$ distinct from $A$ such that $A\sim_\mathbf{Y} \overline{A}$, then the first row of $\overline{A}$, denoted by $(\overline{a}_i)$, is uniquely determined. Indeed, as $a_1a_2=\overline{a}_1\overline{a}_2$ implies $(\overline{a}_1,\overline{a}_2)=(2,1)$, then by equation (\ref{ratioequality1}), $\overline{A}:A=2$, and hence is uniquely determined. \newline \indent
    The cases where $(a_i)$ contains the subsequence $(2,1)$ or $(1,3,1)$ follow analogously. In particular, assuming $(a_1, a_2) = (2,1)$ or $(a_1,a_2,a_3) = (1,3,1)$, then $\overline{A}:A=\frac{1}{2}$ or $3$, respectively.
\end{proof}

    Let $A$ be a frieze pattern. Following Propositions \ref{distinct-implies-odd} and \ref{distinct-bound}, we denote by $\overline{A}$ (if it exists) the unique frieze pattern distinct from $A$ such that $A\sim_\mathbf{Y} \overline{A}$. 
    It is also helpful to note that our proofs of Propositions \ref{distinct-implies-odd} and \ref{distinct-bound} did not use the full power of $\overline{A}$ being a frieze pattern; they merely relied on $A$ being a frieze pattern and condition (\ref{ratioequality1}). In particular, Lemma 7.5 of \cite{CH} also gives rise to the following corollary.

    \begin{corollary}
    \label{Cuntz-Holm}
        Let $A$ be a frieze pattern with quiddity sequence $q(A) = (a_i)_{i\in\mathbb{Z}}$ and suppose $(\overline{a}_i)_{i \in \mathbb{Z}}$ is a positive integer sequence such that: $$\text{$a_ia_{i+1} = \overline{a}_i\overline{a}_{i+1}$ \qquad for all $i \in \mathbb{Z}$}.$$ Then $(\overline{a}_i):(a_i) \in \{\frac{1}{3},\frac{1}{2}, 1, 2, 3\}$ and the following holds:
        \begin{enumerate}[label=\normalfont(\arabic*)]
            \item if $(\overline{a}_i):(a_i) \in \{\frac{1}{2}, 2\}$ then the first row of $A$ contains the subsequence $(1,2)$ or $(2,1)$.
            \item if $(\overline{a}_i):(a_i) \in \{\frac{1}{3}, 3\}$ then the first row of $A$ contains the subsequence $(1,3,1)$.
            \item if $(\overline{a}_i):(a_i) = 1$ then the first row of $A$ contains the subsequence $(1,2)$, $(2,1)$ or $(1,3,1)$.
        \end{enumerate}
    \end{corollary}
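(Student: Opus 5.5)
The plan is to first show that $(\overline{a}_i)$ and $(a_i)$ are related by an alternating scalar factor, then pin that factor down using a short window of the quiddity sequence supplied by Lemma 7.5 of \cite{CH}.

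First, the hypothesis $a_ia_{i+1}=\overline{a}_i\overline{a}_{i+1}$ gives $\overline{a}_{i+1}/a_{i+1} = a_i/\overline{a}_i$ for every $i$. Hence the ratio $r_i := \overline{a}_i/a_i$ satisfies $r_{i+1}=r_i^{-1}$, so $r_i$ alternates between $q$ and $q^{-1}$. Setting $q:=r_1$, we get $\overline{a}_{2k+1}=q\,a_{2k+1}$ and $a_{2k}=q\,\overline{a}_{2k}$ for all $k$. This is exactly $(\overline{a}_i):(a_i)=q$. No periodicity is needed here; it is the same telescoping as in equation (\ref{ratioequality2}), run forward without closing the cycle.

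Second, we bound $q$. By Lemma 7.5 of \cite{CH}, the first row of $A$ contains $(1,2)$, $(2,1)$ or $(1,3,1)$ at some position. Suppose it contains $(1,2)$ at positions $m,m+1$. Then $\overline{a}_m\overline{a}_{m+1}=2$ with both entries positive integers, so $(\overline{a}_m,\overline{a}_{m+1})\in\{(1,2),(2,1)\}$. Therefore the ratio $r_m$ lies in $\{1,2\}$, so $q\in\{1,2,\tfrac12\}$, depending on the parity of $m$. The case $(2,1)$ is symmetric. Now suppose it contains $(1,3,1)$ at positions $m,m+1,m+2$. Then $\overline{a}_m\overline{a}_{m+1}=3$ and $\overline{a}_{m+1}\overline{a}_{m+2}=3$. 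This forces $(\overline{a}_m,\overline{a}_{m+1},\overline{a}_{m+2})\in\{(1,3,1),(3,1,3)\}$, so $r_m\in\{1,3\}$ and $q\in\{1,3,\tfrac13\}$. In every case $q\in\{\tfrac13,\tfrac12,1,2,3\}$.

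Third, we read off (1)–(3). If $q\in\{\tfrac12,2\}$, the bound $q\in\{1,3,\tfrac13\}$ from a $(1,3,1)$ window would be violated, so the window provided by \cite{CH} cannot have been only of that type. Thus the first row contains $(1,2)$ or $(2,1)$. Similarly, if $q\in\{\tfrac13,3\}$, any $(1,2)$ or $(2,1)$ window would force $q\in\{1,2,\tfrac12\}$, a contradiction. So the first row contains $(1,3,1)$. Statement (3) is just Lemma 7.5 of \cite{CH} itself. The only point needing care is the logic in (1) and (2): Lemma 7.5 guarantees at least one of the three patterns occurs, and each occurring pattern independently constrains $q$. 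This exclusion argument is the main thing to get right; the computations are immediate.
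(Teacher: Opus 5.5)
Your proof is correct and takes the same route as the paper. The paper obtains this corollary directly from Lemma 7.5 of \cite{CH}, together with the local window arguments in the proofs of Propositions \ref{distinct-implies-odd} and \ref{distinct-bound}; you have made explicit the alternation $\overline{a}_{i+1}/a_{i+1}=a_i/\overline{a}_i$ and the exclusion step needed for (1) and (2). As in the paper, the appeal to Lemma 7.5 tacitly requires width $w\geq 1$ (for the width-$0$ frieze with quiddity $(1,1,1)$, part (3) fails), but this caveat is shared with the source and is not a defect of your argument.
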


\begin{definition}[$n$-cut and $n$-glue]
\label{n-cut}
    Let $n \in \{2,3,4\}$ and $(a_i)_{i\in\mathbb{Z}}$ be an integer sequence with period $m \ge n+3$. Given $j \in \{1,\ldots, m\}$ we define a new integer sequence $(a'_i)_{i\in\mathbb{Z}}$, called an \textbf{\boldmath$n$-cut} of $(a_i)_{i\in\mathbb{Z}}$ at $j$, as follows:
    
\begin{enumerate}[label=\normalfont(\arabic*)]
        \item \textbf{$2$(a)-cut}: if $(a_j,a_{j+1})=(1,2)$, then $(a'_i)_{i\in\mathbb{Z}}$ is the $(m-2)$-periodic integer sequence formed by applying the following replacement to each $m$-period of $(a_i)_{i\in\mathbb{Z}}$:
        \[
        (a_{j-1},a_j,a_{j+1},a_{j+2})
        \mapsto
        (a_{j-1}-2,a_{j+2}-1).
        \]
        
        \item \textbf{$2$(b)-cut}: if $(a_j,a_{j+1})=(2,1)$ then $(a'_i)_{i\in\mathbb{Z}}$ is the $(m-2)$-periodic integer sequence formed  by applying the following replacement to each $m$-period of $(a_i)_{i\in\mathbb{Z}}$:
        \[
        (a_{j-1},a_j,a_{j+1},a_{j+2})
        \mapsto
        (a_{j-1}-1,a_{j+2}-2).
        \]
        
        \item \textbf{$3$-cut}: if $(a_j,a_{j+1},a_{j+2})=(1,3,1)$, then $(a'_i)_{i\in\mathbb{Z}}$ is the $(m-3)$-periodic integer sequence formed by applying the following replacement to each $m$-period of $(a_i)_{i\in\mathbb{Z}}$:
        \[
        (a_{j-1},a_j,a_{j+1},a_{j+2},a_{j+3})
        \mapsto
        (a_{j-1}-2,a_{j+3}-2).
        \]

        \item \textbf{$4$(a)-cut}: if $(a_j,a_{j+1},a_{j+2},a_{j+3})=(1,3,1,3)$ then $(a'_i)_{i\in\mathbb{Z}}$ is the $(m-4)$-periodic integer sequence formed  by applying the following replacement to each $m$-period of $(a_i)_{i\in\mathbb{Z}}$:
        \[
        (a_{j-1},a_j,a_{j+1},a_{j+2},a_{j+3},a_{j+4})
        \mapsto
        (a_{j-1}-3,a_{j+4}-1).
        \]

        \item \textbf{$4$(b)-cut}: if $(a_j,a_{j+1},a_{j+2},a_{j+3})=(3,1,3,1)$ then $(a'_i)_{i\in\mathbb{Z}}$ is the $(m-4)$-periodic integer sequence formed  by applying the following replacement to each $m$-period of $(a_i)_{i\in\mathbb{Z}}$:
        \[
        (a_{j-1},a_j,a_{j+1},a_{j+2},a_{j+3},a_{j+4})
        \mapsto
        (a_{j-1}-1,a_{j+4}-3).
        \]
\end{enumerate}
    We define \textbf{\boldmath$n$-glue} as the reverse operation of an $n$-cut. See Figure \ref{fig:n-cut}.
\end{definition}

\begin{remark}\label{cut polygons}
    In Definition \ref{n-cut}, when the sequence $(a_i)_{i\in\mathbb{Z}}$ arises as the first row of a frieze pattern, all of the cuts described correspond to cutting the associated triangulated polygon $T$. In particular, an $n$-cut corresponds to removing a certain triangulated quadrilateral from $T$ if $n=2$; a triangulated pentagon if $n=3$, and triangulated hexagon if $n=4$. See Figure \ref{fig:n-cut} for a visualisation.
\end{remark}

\begin{figure}[H]
\[
\renewcommand{\arraystretch}{1.5}
\begin{array}{|c|c|}
\hline
\textbf{2-glue}
&
\vcenter{
\hbox{
$
\begin{tikzcd}
      \draw (-1,0) arc[start angle=180,end angle=360,radius=10mm] to (1,0);
      \draw [red] (1,0) arc[start angle=0,end angle=180,radius=10mm] to (-1,0);
      \draw [red] (180:1) to (0:1);
      \draw [red] (180:1) to (60:1);
      \fill (0:1) node[right]{a_{j+2}:= a'_{j}\color{red}+1} circle (2pt);
      \fill (180:1) node[left]{a_{j-1} := a'_{j-1}\color{red}+2} circle (2pt);
      \fill (60:1) node[above]{\color{red}2} circle (2pt);
      \fill (120:1) node[above]{\color{red}1} circle (2pt);
  \end{tikzcd}
  \begin{tikzcd}
      \fill (0,0) node{};
  \end{tikzcd}
  \begin{tikzcd}
      \draw (-1,0) arc[start angle=180,end angle=360,radius=10mm] to (1,0);
      \draw [blue] (1,0) arc[start angle=0,end angle=180,radius=10mm] to (-1,0);
      \draw [blue] (180:1) to (0:1);
      \draw [blue] (120:1) to (0:1);
      \fill (0:1) node[right]{a_{j+2}:= a'_{j}\color{blue}+2} circle (2pt);
      \fill (180:1) node[left]{a_{j-1}:=a'_{j-1}\color{blue}+1} circle (2pt);
      \fill (60:1) node[above]{\color{blue}1} circle (2pt);
      \fill (120:1) node[above]{\color{blue}2} circle (2pt);
  \end{tikzcd}
$
}
\vspace{1.5mm}}
\\
\hline

\textbf{3-glue}
&
\vcenter{
\hbox{
$
\begin{tikzcd}
      \draw (-1,0) arc[start angle=180,end angle=360,radius=10mm] to (1,0);
      \draw [ForestGreen] (1,0) arc[start angle=0,end angle=180,radius=10mm] to (-1,0);
      \draw [ForestGreen] (180:1) to (0:1);
      \draw [ForestGreen] (90:1) to (0:1);
      \draw [ForestGreen] (180:1) to (90:1);
      \fill (0:1) node[right]{a_{j+3}:= a_{j}\color{ForestGreen}+2} circle (2pt);
      \fill (180:1) node[left]{a_{j-1} := a'_{j-1}\color{ForestGreen}+2} circle (2pt);
      \fill (45:1) node[above]{\color{ForestGreen}1} circle (2pt);
      \fill (90:1) node[above]{\color{ForestGreen}3} circle (2pt);
      \fill (135:1) node[above]{\color{ForestGreen}1} circle (2pt);
  \end{tikzcd}
$
}
\vspace{1.5mm}}
\\
\hline

\textbf{4-glue}
&
\vcenter{
\hbox{
$
\begin{tikzcd}
      \draw (-1,0) arc[start angle=180,end angle=360,radius=10mm] to (1,0);
      \draw [red] (1,0) arc[start angle=0,end angle=180,radius=10mm] to (-1,0);
      \draw [red] (180:1) to (0:1);
      \draw [red] (180:1) to (108:1);
      \draw [red] (36:1) to (108:1);
      \draw [red] (180:1) to (36:1);
      \fill (0:1) node[right]{a_{j+4}:= a'_{j}\color{red}+1} circle (2pt);
      \fill (180:1) node[left]{a_{j-1}:= a'_{j-1}\color{red}+3} circle (2pt);
      \fill (36:1) node[above]{\color{red}3} circle (2pt);
      \fill (72:1) node[above]{\color{red}1} circle (2pt);
      \fill (108:1) node[above]{\color{red}3} circle (2pt);
      \fill (144:1) node[above]{\color{red}1} circle (2pt);
  \end{tikzcd}
  \begin{tikzcd}
      \fill (0,0) node{};
  \end{tikzcd}
  \begin{tikzcd}
      \draw (-1,0) arc[start angle=180,end angle=360,radius=10mm] to (1,0);
      \draw [blue] (1,0) arc[start angle=0,end angle=180,radius=10mm] to (-1,0);
      \draw [blue] (180:1) to (0:1);
      \draw [blue] (144:1) to (72:1);
      \draw [blue] (0:1) to (72:1);
      \draw [blue] (144:1) to (0:1);
      \fill (0:1) node[right]{a_{j+4}:= a'_{j}\color{blue}+3} circle (2pt);
      \fill (180:1) node[left]{a_{j-1} := a'_{j-1}\color{blue}+1} circle (2pt);
      \fill (36:1) node[above]{\color{blue}1} circle (2pt);
      \fill (72:1) node[above]{\color{blue}3} circle (2pt);
      \fill (108:1) node[above]{\color{blue}1} circle (2pt);
      \fill (144:1) node[above]{\color{blue}3} circle (2pt);
  \end{tikzcd}
$
}
\vspace{1.5mm}}
\\
\hline
\end{array}
\]
    \caption{We illustrate the triangulations motivating the definition of $n$-glue (and $n$-cut) from Definition \ref{n-cut}. In the case of $2$-glues and $4$-glues, red represents a type (a) glue, and blue represents a type (b) glue. For $3$-glues, there is only one type of glue, and we use green to represent this.}
    \label{fig:n-cut}
\end{figure}

Roughly speaking, the following proposition shows that if one performs $2$-cuts or $4$-cuts on a pair of $\mathbf{Y}$-equivalent frieze patterns, then their alternating scalar factor is preserved.

\begin{proposition}\label{double cut}
Let $(a_i)_{i\in\mathbb{Z}}$ and $(\overline{a}_i)_{i\in\mathbb{Z}}$ be $m$-periodic positive integer sequences.
\begin{enumerate}[label=\normalfont(\arabic*)]
    \item  If $(\overline{a}_i):(a_i) \in \{\frac{1}{2}, 2\}$ and for some $j \in [1,m]$ we have $$\text{$(a_j,a_{j+1}) \in \{(1,2), (2,1)\}$ and $(\overline{a}_j,\overline{a}_{j+1})=(a_{j+1},a_j)$},$$
    then let $(a'_i)_{i\in\mathbb{Z}}$ and $(\overline{a}'_i)_{i\in\mathbb{Z}}$ be the respective $2$-cuts of $(a_i)_{i\in\mathbb{Z}}$ and $(\overline{a}_i)_{i\in\mathbb{Z}}$.
    
    \item  If $(\overline{a}_i):(a_i) \in \{\frac{1}{3}, 3\}$ and for some $j \in [1,m]$ we have $$\text{$(a_j,a_{j+1},a_{j+2},a_{j+3}) \in \{(1,3,1,3), (3,1,3,1)\}$ and $(\overline{a}_j,\overline{a}_{j+1},\overline{a}_{j+2},\overline{a}_{j+3})=(a_{j+3},a_{j+2},a_{j+1},a_j)$},$$
    then let $(a'_i)_{i\in\mathbb{Z}}$ and $(\overline{a}'_i)_{i\in\mathbb{Z}}$ be the respective $4$-cuts of $(a_i)_{i\in\mathbb{Z}}$ and $(\overline{a}_i)_{i\in\mathbb{Z}}$.
    \end{enumerate}

    Then, in either case above, we have $(\overline{a}'_i):(a'_i)=(\overline{a}_i):(a_i)$.
\end{proposition}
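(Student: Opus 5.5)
This is a direct bookkeeping verification. Write $q=(\overline{a}_i):(a_i)$, so $\overline{a}_{2k+1}=q\,a_{2k+1}$ and $\overline{a}_{2k}=q^{-1}a_{2k}$. The cut removes the same block of positions from both sequences and modifies only the two neighbouring entries $a_{j-1}$ and $a_{j+n}$ (with $n=2$ or $4$). Every other entry of $(a'_i)$ and $(\overline{a}'_i)$ is inherited from the original sequence. Since $n$ is even, removing $n$ consecutive entries from each period does not change the parity of the surviving indices. Hence all unmodified entries still satisfy the alternating relation with the same factor $q$. One detail needs checking: $m$ and $m-n$ have the same parity, so the alternating pattern remains consistent across periods after re-indexing. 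It therefore suffices to check the two modified entries in each case.

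\textbf{Case (1).} First reduce the possibilities. The hypotheses $(\overline{a}_j,\overline{a}_{j+1})=(a_{j+1},a_j)$ and $\overline{a}_j=q^{\pm1}a_j$ force $q$ once the parity of $j$ and the pattern $(1,2)$ or $(2,1)$ are fixed. For example, if $j$ is odd and $(a_j,a_{j+1})=(1,2)$, then $q=2$. If $(a_j,a_{j+1})=(1,2)$ is cut by a 2(a)-cut, then $(\overline{a}_j,\overline{a}_{j+1})=(2,1)$ is cut by a 2(b)-cut. The new entries are:
\begin{itemize}
\item $a'_{j-1}=a_{j-1}-2$ and $\overline{a}'_{j-1}=\overline{a}_{j-1}-1$;
\item the entry replacing position $j+2$ is $a_{j+2}-1$, and in the barred sequence it is $\overline{a}_{j+2}-2$.
\end{itemize}
Position $j-1$ is even when $j$ is odd, so we need $a_{j-1}-2=q(\overline{a}_{j-1}-1)=a_{j-1}-q$. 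This holds since $q=2$. Position $j+2$ is odd, so we need $q(a_{j+2}-1)=\overline{a}_{j+2}-2=q\,a_{j+2}-2$. This also holds since $q=2$. The remaining combinations (pattern $(2,1)$, or $j$ even) are symmetric. In each one the roles of $2$ and $1/2$ swap and the same two linear identities reduce to $q=2$ or $q^{-1}=2$.

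\textbf{Case (2).} The argument is identical, using the 4(a)- and 4(b)-cuts with offsets $(3,1)$ and $(1,3)$ and $q\in\{3,1/3\}$. For instance, take $j$ odd with $(a_j,\dots,a_{j+3})=(1,3,1,3)$, which forces $q=3$. The checks become $a_{j-1}-3=q(\overline{a}_{j-1}-1)=a_{j-1}-q$ and $q(a_{j+4}-1)=\overline{a}_{j+4}-3$, since $j+4$ is odd. Both hold because $q=3$.

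\textbf{Main obstacle.} The only delicate point is the index and parity bookkeeping. I expect it to be the main obstacle, though not a real difficulty. One must confirm that the modified entries sit at parities where the constants subtracted, $2$ versus $1$ or $3$ versus $1$, exactly match the factor $q$ forced by the pattern at $j$. One must also confirm that removing an even-length block keeps the alternating relation consistent after re-indexing the $(m-n)$-periodic sequences. I would handle this by fixing $j$ odd without loss of generality, through a shift of indices that replaces $q$ by $q^{-1}$, and then treating the two patterns in each case explicitly.
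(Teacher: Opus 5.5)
Your proposal is correct and follows essentially the paper's argument: both reduce to checking that the two modified entries, at positions $j-1$ and $j+n$, still satisfy the alternating relation, using the value of $q$ forced by the pattern at $j$. You also spell out the parity bookkeeping (since $n$ is even, surviving entries keep their parity), and you phrase the check multiplicatively, which avoids the paper's division by $a_{j-1}-2$; this makes your version slightly more complete than the paper's one-case sketch.
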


\begin{proof}
    We shall only prove the case $(a_j,a_{j+1})=(1,2)$ as the proofs of the other cases are similar.

    It suffices to check that $\frac{\overline{a}_{j-1}-1}{a_{j-1}-2}=\frac{\overline{a}_{j-1}}{a_{j-1}}$ and $\frac{\overline{a}_{j+2}-2}{a_{j+2}-1}=\frac{\overline{a}_{j+2}}{a_{j+2}}$. Since $(\overline{a}_i):(a_i)$ is well-defined, we have $\frac{\overline{a}_{j-1}}{a_{j-1}}=\frac{a_j}{\overline{a}_j}=\frac{1}{2}$, hence $2\overline{a}_{j-1}=a_{j-1}$ and therefore $\frac{\overline{a}_{j-1}-1}{a_{j-1}-2}=\frac{\overline{a}_{j-1}-1}{2\overline{a}_{j-1}-2}=\frac{1}{2}=\frac{\overline{a}_{j-1}}{a_{j-1}}$. Similarly $\frac{\overline{a}_{j+2}-2}{a_{j+2}-1}=\frac{\overline{a}_{j+2}}{a_{j+2}}$ and we are done.
\end{proof}

\begin{remark}
\label{product-preservation}
    Note that, in Proposition \ref{double cut} above, if $a_ia_{i+1} = \overline{a}_i\overline{a}_{i+1}$ for all $i \in \mathbb{Z}$ then we also have $a'_ia'_{i+1} = \overline{a}'_i\overline{a}'_{i+1}$ for all $i \in \mathbb{Z}$. Indeed, this follows directly from the fact that $(\overline{a}'_i):(a'_i)=(\overline{a}_i):(a_i)$.
\end{remark}

\begin{proposition}\label{ratio-classification}
    Let $A$ be a frieze pattern with quiddity sequence $q(A) = (a_i)_{i \in \mathbb{Z}}$ and suppose $(\overline{a}_i)_{i \in \mathbb{Z}}$ is a distinct positive integer sequence such that: $$\text{$a_ia_{i+1} = \overline{a}_i\overline{a}_{i+1}$ \qquad for all $i \in \mathbb{Z}$}.$$ Then $(\overline{a}_i):(a_i) \in \left\{\frac{1}{3}, \frac{1}{2}, 2, 3\right\}$ and the following holds:
    \begin{enumerate}[label=\normalfont(\arabic*)]
        \item $(\overline{a}_i):(a_i)\in \left\{\frac{1}{2}, 2\right\}$ $\iff$ there exists $j \in \mathbb{Z}$ such that $(a_j,a_{j+1}) \in \{(1,2), (2,1)\}$ and $(\overline{a}_j,\overline{a}_{j+1})=(a_{j+1},a_j)$.
        
        \item $(\overline{a}_i):(a_i)\in \left\{\frac{1}{3}, 3\right\}$ $\iff$ there exists $j \in \mathbb{Z}$ such that $(a_j,a_{j+1},a_{j+2},a_{j+3}) \in \{(1,3,1,3), (3,1,3,1)\}$ and $(\overline{a}_j,\overline{a}_{j+1},\overline{a}_{j+2},\overline{a}_{j+3})=(a_{j+3},a_{j+2},a_{j+1},a_j)$.
    \end{enumerate}

    Moreover, there exists a frieze pattern $\overline{A}$ such that $q(\overline{A}) = (\overline{a}_i)$.
    
\end{proposition}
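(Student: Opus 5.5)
Since $a_ia_{i+1}=\overline{a}_i\overline{a}_{i+1}$ for all $i$, the ratio $\overline{a}_i/a_i$ alternates between some $q$ and $1/q$. So $(\overline{a}_i):(a_i)=q$ is well defined (possibly after adjusting the parity convention). Corollary \ref{Cuntz-Holm} puts $q$ in $\{\frac13,\frac12,1,2,3\}$. If $q=1$ then $\overline{a}_i=a_i$ for all $i$, contradicting distinctness, so $q\in\{\frac13,\frac12,2,3\}$. The rest of the argument is an induction on the width $w$ (equivalently on the period $m=w+3$), in the style of Conway--Coxeter, using the cuts of Definition \ref{n-cut} together with Proposition \ref{double cut} and Remark \ref{product-preservation}.

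\textbf{The reverse implications in (1) and (2).} These are immediate. If $(\overline{a}_j,\overline{a}_{j+1})=(a_{j+1},a_j)$ with $\{a_j,a_{j+1}\}=\{1,2\}$, then $\overline{a}_j/a_j\in\{2,\frac12\}$, which fixes $q$. Likewise the reversed block $(1,3,1,3)\leftrightarrow(3,1,3,1)$ forces $\overline{a}_j/a_j=3^{\pm1}$.

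\textbf{The forward implications.}
\begin{itemize}
\item Suppose $q\in\{\frac12,2\}$. By Corollary \ref{Cuntz-Holm}(1), $(a_i)$ contains $(1,2)$ or $(2,1)$ at some position $j$, so $\overline{a}_j=q^{\pm1}a_j$. One parity makes the pair swap, which is what we want. With the other parity we would get a pair such as $(\frac12,4)$ or $(4,1)$, and non-integrality rules out the first. To handle the second, I would induct: perform a $2$-cut on $(a_i)$ at that occurrence. This yields a frieze quiddity by Remark \ref{cut polygons}. I would then show that the correspondingly modified $(\overline{a}_i)$ still satisfies the product relation with the same $q$, and apply the induction hypothesis to find a swapped pair there. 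Finally, glue back and check that the swapped pair survives, or that it reappears adjacent to the glue.
\item For $q\in\{\frac13,3\}$ the argument is analogous, using Corollary \ref{Cuntz-Holm}(2) and the $3$-cut of a $(1,3,1)$. Around an occurrence of $(1,3,1)$, scaling by $3^{\pm1}$ gives $(3,1,3)$ or $(\frac13,9,\frac13)$, and integrality forces the first. Integrality of the neighbours then forces $a_{j-1}$ or $a_{j+3}$ to be divisible by $3$. I would then argue, again by cutting and induction, that the pattern extends to a full $(1,3,1,3)$ block reversed in $\overline{a}$.
\end{itemize}

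\textbf{Existence of $\overline{A}$.} By (1) or (2) we can apply the appropriate $2$-cut or $4$-cut to both sequences simultaneously. Proposition \ref{double cut} and Remark \ref{product-preservation} say the cut pair still satisfies the product relation with the same factor $q$, and the cut of $(a_i)$ is the quiddity of a frieze of width $w-2$ or $w-4$. Induction then gives a frieze $\overline{A}'$ with quiddity $(\overline{a}'_i)$. Gluing the mirrored quadrilateral or hexagon (the blue/red pictures in Figure \ref{fig:n-cut}) onto the triangulation of $\overline{A}'$ produces a triangulation whose quiddity is exactly $(\overline{a}_i)$. Theorem \ref{Conway-Coxeter-thm} then gives the frieze $\overline{A}$.

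The base cases are small widths, checked directly; for example $w=1$ with quiddities $(1,2,1,2)$ and $(2,1,2,1)$. For the induction to apply, the cut sequence must again be distinct from the cut of $(a_i)$; this holds because $q\neq1$ is preserved.

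\textbf{Main obstacle.} I expect the hardest step to be the forward direction: producing the reversed occurrence from the mere existence of a $(1,2)$ or $(1,3,1)$ somewhere in $(a_i)$. The integrality and local divisibility bookkeeping in the cut-and-induct argument is delicate there, especially in the $q=3$ case, where one must rule out an isolated $(1,3,1)$ that does not extend to $(1,3,1,3)$.
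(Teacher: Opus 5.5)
\textbf{The gap: the forward direction of (2).} Your reverse implications, your exclusion of $q=1$, and your construction of $\overline{A}$ by simultaneous $2$- or $4$-cuts (Proposition \ref{double cut}, Remark \ref{product-preservation}) followed by gluing all match the paper. The problem is the forward direction of (2), which is exactly the step you flag. There, ``by cutting and induction'' does not work as stated.

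A $3$-cut at $(x,1,3,1,y)$ removes an odd number of entries, and so does a $1$-cut at an ear $(x,1,y)$. So $x$ and $y$ become adjacent, as $x-2,y-2$ (respectively $x-1,y-1$). But $x$ and $y$ lie in the same parity class: both satisfy $\overline{a}=a/3$. Since $3\mid x,y$, neither new entry is divisible by $3$. Any partner with ratio $3^{\pm1}$ needs one of every two adjacent entries to be divisible by $3$. Hence the cut sequence has no ratio-$3^{\pm1}$ partner, and the induction hypothesis tells you nothing about the original block.

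The only cuts that preserve the relation are $2$- and $4$-cuts, and neither helps here:
\begin{itemize}
\item $2$-cuts are unavailable when $q=3^{\pm1}$, because a $(1,2)$ would force $\overline{a}_{j+1}=2/3$.
\item A $4$-cut needs the very $(1,3,1,3)$ block you are trying to produce.
\end{itemize}
Your existence argument for $\overline{A}$ uses a $4$-cut when $q=3^{\pm1}$, so it inherits this gap.

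\textbf{The missing idea: a global argument on $A$ alone.} Integrality forces $\overline{a}_k=3$ whenever $a_k=1$. Hence \emph{both} neighbours of every $1$ in $(a_i)$ are multiples of $3$, not just one of $a_{j-1},a_{j+3}$ as you wrote. Now suppose no $(1,3,1,3)$ or $(3,1,3,1)$ occurs. Then every $(x,1,3,1,y)$ has $x,y\ge 6$, and every other $1$ lies in some $(x,1,y)$ with $x,y\ge 3$. Cut off all these pentagons and ears from the triangulation simultaneously. One checks that every surviving quiddity entry is still $\ge 2$: a vertex next to a removed pentagon is $\ge 6$ and loses at most $4$; a vertex between two ears cannot be $3$, so it is $\ge 6$ and loses $2$; and so on. This contradicts the fact that a triangulated polygon always has an ear.

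\textbf{A slip in (1).} For $(a_j,a_{j+1})=(2,1)$, the wrong parity gives $(\overline{a}_j,\overline{a}_{j+1})=(4,\tfrac12)$, not $(4,1)$, because the product must stay $2$. So every occurrence of $(1,2)$ or $(2,1)$ is automatically swapped. The forward direction of (1) then follows directly from Corollary \ref{Cuntz-Holm}(1), and the induction you sketch there is unnecessary.
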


\begin{proof}

Directly from Corollary \ref{Cuntz-Holm} we know $(\overline{a}_i):(a_i) \in \left\{\frac{1}{3}, \frac{1}{2}, 2, 3\right\}$ and that (1) holds. Similarly, regarding (2), we know that $(\overline{a}_i):(a_i)\in \left\{\frac{1}{3}, 3\right\}$ $\iff$  $(a_k, a_{k+1},a_{k+2}) = (1,3,1)$ for some $k \in \mathbb{Z}$
    
    Now, if $(a_k, a_{k+1},a_{k+2}) = (1,3,1)$ for some $k \in \mathbb{Z}$ then $(\overline{a}_k,\overline{a}_{k+1},\overline{a}_{k+2})=(3,1,3)$. Furthermore, since $a_{k-1}a_k=\overline{a}_{k-1}\overline{a}_k$ then $a_{k-1}$ is divisible by $3$. Similarly $a_{k+2}$ is also divisible by $3$. In other words, all subsequences of $(a_i)$ of the form $(x,1,3,1,y)$ must have $x$ and $y$ being multiples of $3$. If $x$ or $y$ is $3$ we are done, so for a contradiction suppose $x,y \geq 6$. Note that, the only other subsequences of $(a_i)$ containing a $1$ will be of the form $(x,1,y)$ where $x,y \geq 3$ by (1). 
    
    Now, simultaneously carry out $3$-cuts at all subsequences of the form $(x,1,3,1,y)$, and $1$-cuts (the move corresponding to removing a single triangle in the polygon associated to $A$) on all other subsequences of the form $(x,1,y)$. Note that, after doing so, we obtain the quiddity cycle of a triangulation that has no $1$'s. Contradiction. Therefore, as desired, $(a_i)$ must contain the subsequence $(1,3,1,3)$.
    

Finally, note that the existence of a frieze pattern $\overline{A}$ such that $q(\overline{A}) = (\overline{a}_i)$ follows by induction on the period of $A$. Indeed, there are two bases cases for which the result is trivially true; here $A$ corresponds to either a quadrilateral or hexagon with respective quiddity cycles $(1,2,1,2)$ and $(1,3,1,3,1,3)$.

Now, assume $A$ has period $m>6$ (the case when $m=5$ is vacuous) and that the result holds for every frieze pattern whose period is less than $m$. By Corollary \ref{Cuntz-Holm} we know $(\overline{a}_i):(a_i) \in \left\{\frac{1}{3}, \frac{1}{2}, 2, 3\right\}$. In turn, Proposition \ref{double cut} tells us that we can perform respective $2$-cuts or $4$-cuts on $(a_i)$ and $(\overline{a}_i)$ to yield two sequences with a smaller period: $(a'_i)$ and $(\overline{a}'_i)$. Furthermore, by the equality $(\overline{a}'_i):(a'_i)=(\overline{a}_i):(a_i)$ and Remark \ref{product-preservation}, then $(\overline{a}'_i)$ is the quiddity sequence of some frieze pattern $\overline{A}'$. Finally, since $(\overline{a}_i)$ arises from a $2$-glue or $4$-glue on $(\overline{a}'_i)$, then the result follows.
    
\end{proof}

\begin{definition}

Let $n \in \mathbb{N}_{\geq 3}$ and consider the $n$-gon whose vertices are labeled $1$ to $n$. \newline
Given $k \in \mathbb{N}_{\geq 3}$ such that $k-2$ divides $n-2$\footnote{There are no partitions of an $n$-gon into $k$-gons if $n-2$ is not divisble by $k-2$.},
we define $\mathcal{P}_{n,k}$ to be the set of all partitions of the $n$-gon into $k$-gons. \newline
Moreover, for each $P \in \mathcal{P}_{n,k}$, we let $T_P$ denote the dual graph of $P$, which is a tree in this context. Namely, $T_P$ is the graph whose vertices are the $k$-gons in the partition $P$, and edges between two vertices exist whenever their corresponding $k$-gons share an arc.
    
\end{definition}


%

\begin{definition}
\label{4-triangulations}
    Let $P\in\mathcal{P}_{n,4}$. We construct triangulations $P_1$ and $P_2$ of $P$ as follows.

    Namely, for each $i \in \{1,2\}$, to define $P_i$ we first consider the quadrilateral in $P$ containing the boundary edge connecting $1$ and $2$. We triangulate this quadrilateral by adding the unique diagonal connected to vertex $i$.


    After that, we uniquely triangulate the remaining quadrilaterals such that all added diagonals (including the diagonal connected to $i$) form a connected graph. Since $T_P$ is a tree, this procedure is indeed well-defined and unique.
  
\end{definition}

\begin{definition}
\label{6-triangulations}
    Let $P\in\mathcal{P}_{n,6}$. We construct triangulations $P_1,P_2$ of $P$ in a similar way to Definition \ref{4-triangulations}. Indeed, for each $6$-gon in $P$, we replace it with one of the two triangulations that possess rotational symmetry of order $2$:

    \[\begin{tikzcd}
      \draw (0,0) circle (1cm);
      \draw (30:1) to (150:1);
      \draw (270:1) to (150:1);
      \draw (30:1) to (270:1);
      \fill (30:1) node[above]{} circle (2pt);
      \fill (90:1) node[above]{} circle (2pt);
      \fill (150:1) node[above]{} circle (2pt);
      \fill (210:1) node[below]{} circle (2pt);
      \fill (270:1) node[below]{} circle (2pt);
      \fill (330:1) node[below]{} circle (2pt);
  \end{tikzcd}
  \begin{tikzcd}
      \fill (0,0) node{};
  \end{tikzcd}
  \begin{tikzcd}
      \draw (0,0) circle (1cm);
      \draw (90:1) to (210:1);
      \draw (210:1) to (330:1);
      \draw (330:1) to (90:1);
      \fill (30:1) node[above]{} circle (2pt);
      \fill (90:1) node[above]{} circle (2pt);
      \fill (150:1) node[above]{} circle (2pt);
      \fill (210:1) node[below]{} circle (2pt);
      \fill (270:1) node[below]{} circle (2pt);
      \fill (330:1) node[below]{} circle (2pt);
  \end{tikzcd}\]

  Namely, for each $i \in \{1,2\}$, to define $P_i$, we first consider the $6$-gon in $P$ that contains the boundary edge connecting $1$ and $2$, and choose the unique triangulation of this $6$-gon (with rotational symmetry of order $2$) so that vertex $i$ has $3$ triangles adjacent to it. 

    After that, replace the remaining hexagons with their triangulated counterparts so that all the new diagonals form a connected graph. Since $T_P$ is a tree, this procedure is well-defined and unique.
  
\end{definition}

\begin{proposition}
    Let $i\in\{1,2\}$, $j\in\{1,2,...,n\}$, $P\in\mathcal{P}_{n,4}$. Then if $i\equiv j$ modulo $2$, in each quadrilateral vertex $j$ is adjacent to inside P, it is adjacent to $1$ additional arc on top of those in $P$, otherwise all arcs adjacent to $j$ in $P_i$ are in $P$.
\end{proposition}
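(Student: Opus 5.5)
Since $T_P$ is a tree, I will argue inductively along it, starting from the quadrilateral $Q_0$ of $P$ that contains the boundary edge $\{1,2\}$. The idea is that in $P_i$ the diagonals added to the quadrilaterals all join vertices whose labels have the same parity as $i$, i.e.\ the vertices $\equiv i \pmod 2$. The statement follows from this immediately. A vertex $j\equiv i$ lies on exactly one added diagonal in each quadrilateral containing it, and that diagonal is new, so it gains exactly one arc per such quadrilateral. A vertex $j\not\equiv i$ lies on no added diagonal, so its arcs in $P_i$ are exactly those of $P$.

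The first step is a parity observation about $P$ itself. Every arc of $P\in\mathcal{P}_{n,4}$ joins vertices of opposite parity. For a boundary edge this is clear. For a diagonal $\{a,b\}$ of $P$, the side of it containing vertices $a,a+1,\dots,b$ is itself partitioned into quadrilaterals, so it is an even-gon. Hence $b-a+1$ is even and $a\not\equiv b \pmod 2$; here $n$ is even because $2\mid n-2$. It follows that the four vertices of any quadrilateral of $P$ alternate in parity around it. Each quadrilateral therefore has exactly one diagonal joining its two vertices $\equiv i$ (the "$i$-diagonal") and one joining its two vertices $\not\equiv i$.

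Next I show by induction on the distance in $T_P$ from $Q_0$ that $P_i$ uses the $i$-diagonal in every quadrilateral. For $Q_0$ this holds by definition, since the added diagonal is the one through vertex $i$. Now let $Q$ be adjacent in $T_P$ to a quadrilateral $Q'$ that is closer to $Q_0$, sharing the arc $\{a,b\}$, and suppose $Q'$ received its $i$-diagonal. Exactly one of $a,b$, say $a$, is $\equiv i$, and the diagonal added in $Q'$ passes through $a$. The connectivity requirement in Definition \ref{4-triangulations} forces the diagonal chosen in $Q$ to meet the diagonals already chosen, and the only vertices $Q$ shares with the earlier quadrilaterals are $a$ and $b$. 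Since $b$ lies on no diagonal chosen so far, by the inductive hypothesis, $Q$'s diagonal must pass through $a$. That diagonal is the $i$-diagonal of $Q$.

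The main obstacle is this connectivity step. I need to check that the vertices $Q$ shares with the already-treated part are exactly the two endpoints of the common arc, and that connectivity is meant at vertices, so that a diagonal meeting the earlier graph only at $b$ is impossible. Planarity of the partition together with the tree structure of $T_P$ should give both facts. Once this is settled, counting arcs at each vertex $j$ finishes the proof as described in the first paragraph.
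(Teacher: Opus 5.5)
Your proposal is correct, but it runs in the opposite direction from the paper. The paper inducts on the number of quadrilaterals by deleting a leaf of $T_P$. A leaf has consecutive vertices $k,\dots,k+3$, so their parity is automatic, and the induction hypothesis for the smaller partition $P'$ says that only one of $k,k+3$ carries added diagonals; connectivity then forces the leaf's diagonal through that vertex. You instead propagate outward from $Q_0$. Because interior quadrilaterals do not have consecutive labels, you need your lemma that every arc of $P$ joins vertices of opposite parity, which the paper never states. In return you avoid relabelling after the deletion, and you avoid the paper's implicit checks that the leaf can be taken different from $Q_0$ and that $P_i$ restricts to $P'_i$.

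The step you flag as the main obstacle is fine. The arc $\{a,b\}$ splits the $n$-gon into two sub-polygons meeting only in $a$ and $b$, with $Q$'s branch of $T_P$ on one side and everything treated so far on the other. In fact your parity lemma makes the induction unnecessary. Every added diagonal joins two vertices of equal parity, so the graph of added diagonals has no edge between the two parity classes. Connectivity therefore forces every added diagonal into the class of the $Q_0$-diagonal, i.e.\ the class of $i$. This version also covers the global reading of the connectivity condition in Definition~\ref{4-triangulations}; under that reading your phrase ``must meet the diagonals already chosen'' is not literally justified. Combined with the fact that the $i$-diagonals of two adjacent quadrilaterals meet at the endpoint $\equiv i$ of their shared arc, it also proves the existence and uniqueness of $P_i$, which the paper only asserts.
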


\begin{proof}
    We prove by induction on $\frac{n-2}{2}$, which is the number of quadrilaterals. The case $\frac{n-2}{2}=1$ is trivial. Suppose the statement is true for $\frac{n-2}{2}<m-1$ for some $m>1$, when $\frac{n-2}{2}=m$, let $P\in\mathcal{P}_{n,4}$. Since $T_P$ is a tree, we can remove a quadrilateral containing vertices $k,k+1,k+2,k+3$ from $P$ that corresponds to a leaf in $T_P$ to obtain $P'\in\mathcal{P}_{n-2,4}$. To check that the statement holds for $P_1$, observe that since the statement is holds for $P_1'$, it suffices to check that the statement holds for vertices $k,k+1,k+2,k+3$ in $P_1$.
    
    Without loss of generality assume that $k$ is odd. By induction hypothesis $k+3$ is adjacent to only arcs in $P$, so the quadrilateral containing vertices $k,k+1,k+2,k+3$ is triangulated by the arc joining $k$ and $k+2$. Therefore the statement holds for vertices $k,k+1,k+2,k+3$ in $P_1$ and hence all vertices in $P_1$. By similar argument the statement holds for all vertices in $P_2$.
\end{proof}

\begin{proposition}\label{count triangles}
    Let $i\in\{1,2\}$, $j\in\{1,2,...,n\}$, $P\in\mathcal{P}_{n,4}$, and $P(j)$ be the number of quadrilaterals vertex $j$ is adjacent to inside $P$. If $i\equiv j$ modulo $2$, the vertex $j$ is adjacent to $2P(j)$ triangles in $P_i$, otherwise $j$ is adjacent to $P(j)$ triangles in $P_i$.
\end{proposition}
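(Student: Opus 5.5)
The plan is to deduce Proposition \ref{count triangles} directly from the preceding proposition by counting triangles locally, one quadrilateral of $P$ at a time. The triangles of $P_i$ are obtained by splitting each quadrilateral of $P$ into exactly two triangles along a single diagonal. Consequently, the triangles of $P_i$ adjacent to a vertex $j$ are precisely the triangles arising from the quadrilaterals of $P$ that contain $j$. It therefore suffices to show, for each quadrilateral $Q$ of $P$ containing $j$, that $j$ lies in exactly $2$ triangles of $Q$ when $i\equiv j \pmod 2$, and in exactly $1$ triangle otherwise. Summing over the $P(j)$ quadrilaterals containing $j$ then gives the totals $2P(j)$ and $P(j)$.

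The local count runs as follows. Fix a quadrilateral $Q$ with vertex $j$, triangulated in $P_i$ by one of its two diagonals.
\begin{itemize}
\item If that diagonal is incident to $j$, then $j$ is a vertex of both triangles of $Q$.
\item If it is not, the diagonal joins the two neighbours of $j$ in $Q$. Then $j$ lies in exactly one of the two triangles, namely the one cut off at $j$.
\end{itemize}
The preceding proposition says exactly which case occurs. When $i\equiv j \pmod 2$, vertex $j$ receives one additional arc, beyond those of $P$, in each quadrilateral it belongs to, so the diagonal of every such $Q$ is incident to $j$. Otherwise no added arc is incident to $j$, so in every quadrilateral containing $j$ the diagonal avoids $j$.

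One point needs care. In the first case we need the additional arc in $Q$ to be the diagonal of $Q$ itself and not some other arc. This holds because each quadrilateral receives exactly one diagonal, lying inside it. So "one additional arc in each quadrilateral $j$ is adjacent to" means precisely that $Q$'s diagonal passes through $j$.

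I do not expect a real obstacle. The only subtle step is this translation from the arc statement of the preceding proposition to the statement about which diagonal of each quadrilateral is chosen. If that proposition is read only globally, I would instead redo its induction: remove a leaf quadrilateral $\{k,k+1,k+2,k+3\}$ of $T_P$ and track the triangle counts directly. The count changes only at the four vertices of the removed quadrilateral, and these are checked by hand using the parity of $k$.
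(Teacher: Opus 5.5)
Your proposal is correct and follows essentially the same route as the paper. The paper also reads off from the preceding proposition that $j$ lies in $2$ triangles of each of its $P(j)$ quadrilaterals when $i\equiv j \pmod 2$ and in $1$ otherwise, then sums; your remark that the added arc at $j$ is the diagonal of that quadrilateral makes explicit a step the paper leaves implicit.
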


\begin{proof}
    If $i\equiv j$ modulo $2$, then by the previous proposition, $j$ is adjacent to $2$ triangles in each of the $P(j)$ quadrilaterals it is adjacent to inside $P$; similarly if $i\not\equiv j$ modulo $2$, $j$ is adjacent to $1$ triangle in each of the $P(j)$ quadrilaterals it is adjacent to inside $P$.
\end{proof}

Similarly we have the following proposition:

\begin{proposition}
    Let $i\in\{1,2\}$, $j\in\{1,2,...,n\}$, $P\in\mathcal{P}_{n,6}$, and $P(j)$ be the number of hexagons vertex $j$ is adjacent to inside $P$. If $i\equiv j$ modulo $2$, the vertex $j$ is adjacent to $3P(j)$ triangles in $P_i$, otherwise $j$ is adjacent to $P(j)$ triangles in $P_i$.
\end{proposition}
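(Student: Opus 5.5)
The plan is to follow the quadrilateral case: first prove a hexagon analogue of the unnumbered proposition preceding Proposition \ref{count triangles}, then count triangles as in the proof of Proposition \ref{count triangles}. The analogue I would establish reads as follows. Let $P\in\mathcal{P}_{n,6}$ and $i\in\{1,2\}$. For every hexagon $H$ of $P$ and every vertex $j$ of $H$, the restriction of $P_i$ to $H$ is the order-$2$ symmetric triangulation of $H$ in which
\begin{itemize}
\item vertex $j$ is an endpoint of the two new diagonals (so $3$ triangles of $H$ meet $j$) if $j\equiv i \pmod 2$, and
\item vertex $j$ meets no new diagonal (so exactly $1$ triangle of $H$ meets $j$) if $j\not\equiv i\pmod 2$.
\end{itemize}
This is a property of the symmetric triangulation of a hexagon. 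Its "inner triangle" uses either the three alternate vertices of one parity class or those of the other. Vertices of the chosen class each lie in $3$ triangles. The other vertices are ears and lie in exactly $1$ triangle.

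The key point is that every hexagon in $P$ has its vertices alternating in the parity of their global labels. Write a hexagon in cyclic order as $v_1<\dots<v_6$. Each gap $v_{k+1}-v_k$ has the form $1+(\text{number of boundary vertices cut off})$. The region cut off by the side $v_kv_{k+1}$ is itself partitioned into hexagons, so it is a polygon with $m$ vertices where $m-2\equiv 0 \pmod 4$. It has $v_{k+1}-v_k+1$ vertices, so $v_{k+1}-v_k$ is odd. Hence consecutive vertices of every hexagon have opposite parity. In particular, "the class of alternate vertices" is exactly a parity class within each hexagon. The same argument, with $m-2$ even, applies to quadrilaterals.

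With this in hand, I would induct on $\frac{n-2}{4}$, the number of hexagons, exactly as in the quadrilateral proposition. The base case is a single hexagon. Here the definition of $P_i$ chooses the symmetric triangulation in which $i$ has $3$ triangles, so the alternate vertices $i,i+2,i+4$ are the centre class, and the claim holds.

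For the inductive step, remove a leaf hexagon $H$ with vertices $k,\dots,k+5$, glued along the arc $(k,k+5)$. The induction hypothesis gives the result for the remaining partition $P'$, relabelled. The obstacle is to check that the connectivity rule forces the correct choice on $H$, and that removing $H$ and relabelling preserves parity.

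On parity: relabelling shifts the later vertices by $4$, which is even, so parity classes are preserved. The gluing arc $(k,k+5)$ joins vertices of opposite parity; say $k\equiv i$. In $P'$, the hexagon adjacent across this arc has its new diagonals at the vertices of parity $i$, so they touch $k$ but not $k+5$. The connectivity requirement then forces $H$'s inner triangle to contain $k$, and hence to be $\{k,k+2,k+4\}$, which is again the parity-$i$ class.

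I would also verify that this choice is consistent with the rest of $P'$. This follows because the parity of a vertex is global and does not depend on which hexagon it belongs to. Consequently the union of all new diagonals is connected through shared parity-$i$ vertices along the tree $T_P$.

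Finally, summing over the $P(j)$ hexagons containing $j$ gives $3P(j)$ triangles if $j\equiv i$ and $P(j)$ triangles otherwise. No other triangles contain $j$, since every triangle of $P_i$ lies in some hexagon of $P$.
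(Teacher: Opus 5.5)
Your proposal is correct and follows the route the paper intends. The paper states this proposition with only ``similarly'', meaning it repeats the leaf-removal induction and triangle count of the quadrilateral case, and your parity-alternation lemma makes explicit a fact the paper uses silently. One point should be stated explicitly in your inductive step: remove a leaf hexagon that does not contain the boundary edge joining $1$ and $2$, which is possible because a tree with at least two vertices has at least two leaves; this keeps labels $1,2$ intact and makes $P'_i$ the restriction of $P_i$.
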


\begin{corollary}
    Let $P\in\mathcal{P}_{n,k}$ where $k=4$ or $6$. The quiddity cycle of $P_2$ is related to that of $P_1$ by an alternating scalar factor $\frac{k}{2}$.
\end{corollary}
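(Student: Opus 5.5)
The plan is to read the quiddity cycles of $P_1$ and $P_2$ off the triangle counts in Proposition \ref{count triangles} and its hexagonal analogue, and then compare them vertex by vertex. For $k=4$ we have, for every vertex $j$ of the $n$-gon,
\[
a_j(P_1)=\begin{cases}2P(j), & j \text{ odd},\\ P(j), & j\text{ even},\end{cases}
\qquad
a_j(P_2)=\begin{cases}P(j), & j \text{ odd},\\ 2P(j), & j\text{ even}.\end{cases}
\]
Here $P(j)$ is the number of quadrilaterals of $P$ at $j$. It depends only on $P$, not on the choice $i\in\{1,2\}$, and every vertex lies in at least one $k$-gon, so $P(j)\geq 1$. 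The same holds for $k=6$, with $2$ replaced by $3$.

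Next we check the definition of the alternating scalar factor with $q=\frac{k}{2}$, taking $(a_i)$ to be the quiddity sequence of $P_1$ and $(\overline{a}_i)$ that of $P_2$, and writing $q=\frac{k}{2}$. At an odd index $2t+1$ we need $q\cdot a_{2t+1}=\overline{a}_{2t+1}$. At an even index $2t$ we need $a_{2t}=q\cdot\overline{a}_{2t}$. Substituting the counts above, each condition reduces to an identity of the form $qP(j)=qP(j)$, in one orientation or the other. So the corollary follows as soon as the correct orientation is fixed.

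The main obstacle is consistency of conventions. The definition writes $q\cdot a_{\mathrm{odd}}=\overline{a}_{\mathrm{odd}}$, but in $P_1$ the odd vertices carry the \emph{larger} count. So I would first confirm which ordered pair gives factor $\frac{k}{2}$ rather than $\frac{2}{k}$, adjusting the direction of ``$P_2$ related to $P_1$'' if necessary, for instance via the convention in Proposition \ref{distinct-bound}, where $(1,2)\mapsto(2,1)$ gives factor $2$.

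A second point is periodicity. The quiddity sequence is the $n$-periodic extension of the cycle, and $n$ is even: $n-2$ is divisible by $k-2\in\{2,4\}$. Hence the parity of an index is well defined modulo $n$, and the alternating pattern is consistent across periods.

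Finally, the hexagon case relies on the unproved analogue of Proposition \ref{count triangles}. I would justify it briefly by the same leaf-removal induction on the tree $T_P$. Each order-$2$-symmetric triangulation of a hexagon gives $3$ triangles at three alternating vertices and $1$ at the other three. Connectivity of the added diagonals forces the alternating parity to propagate across every shared arc, exactly as in the quadrilateral case.
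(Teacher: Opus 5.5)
Your route is the paper's own. The corollary is stated there without separate proof, as an immediate read-off of $a_j(P_1)$ and $a_j(P_2)$ from Proposition~\ref{count triangles} and its hexagonal analogue, exactly as you do.

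\textbf{The gap: orientation.} You must not leave the orientation open. Your own counts settle it, and they settle it against the literal wording. Take $(a_i)=q(P_1)$, $(\overline{a}_i)=q(P_2)$ and $q=\frac{k}{2}$. The odd-index condition $q\,a_{2t+1}=\overline{a}_{2t+1}$ becomes $q\cdot qP(j)=P(j)$, and the even-index condition becomes $P(j)=q\cdot qP(j)$. Both fail, since $P(j)\geq 1$ and $q\neq 1$.

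So your claim that each condition reduces to $qP(j)=qP(j)$ holds only for the other ordered pair. What is actually true is $(q(P_2)):(q(P_1))=\frac{2}{k}$, equivalently $(q(P_1)):(q(P_2))=\frac{k}{2}$. The case $n=4$ shows this directly: $q(P_1)=(2,1,2,1)$ and $q(P_2)=(1,2,1,2)$. By the convention of Proposition~\ref{distinct-bound} that you cite ($(1,2)$ in $A$ and $(2,1)$ in $\overline{A}$ give $\overline{A}:A=2$), it is $q(P_1)$ that is related to $q(P_2)$ by $2$. This is also how the result is used in Theorem~\ref{equivalence}(1),(3), where $\overline{A}:A\in\{2,3\}$ is matched with $P_1\leftrightarrow\overline{A}$ and $P_2\leftrightarrow A$.

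You should therefore say explicitly that, under the paper's Definition, the statement as worded has $P_1$ and $P_2$ interchanged, and then prove the corrected form. Writing ``adjusting the direction if necessary'' proves neither version.

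\textbf{The hexagon case.} The triangulations you need are the two with a central triangle pictured in Definition~\ref{6-triangulations}. They are invariant under rotation by $120^\circ$, not $180^\circ$. A genuinely $180^\circ$-symmetric triangulation, such as the one with diagonals $1$--$3$, $3$--$6$, $6$--$4$, has quiddity $(2,1,3,2,1,3)$ and lacks the $3/1$ alternation you invoke.

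To make ``parity propagates'' rigorous, use two facts.
\begin{enumerate}
\item Each side of a $k$-gon of $P$ is either a boundary edge or cuts off a sub-polygon partitioned into $k$-gons. Such a sub-polygon has an even number of vertices, because $k-2$ is even. Hence consecutive vertices of every piece have opposite parity.
\item Diagonals (respectively central triangles) joining odd vertices never share a vertex with those joining even vertices.
\end{enumerate}
Connectivity of the added diagonals therefore forces a single parity class throughout, fixed by vertex $i$. This yields the counts $\frac{k}{2}P(j)$ and $P(j)$ directly for both $k=4$ and $k=6$, with no leaf induction.
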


\begin{theorem}\label{equivalence}
    Let $A$ be a frieze pattern of width $w$ such that $\overline{A}$ exists. Then \begin{enumerate}[label=\normalfont(\arabic*)]
        \item if $\overline{A}:A=2$, then there exists $P\in\mathcal{P}_{w+3,4}$ such that $P_1$ corresponds to $\overline{A}$ and $P_2$ corresponds to $A$.
        \item if $\overline{A}:A=\frac{1}{2}$, then there exists $P\in\mathcal{P}_{w+3,4}$ such that $P_1$ corresponds to $A$ and $P_2$ corresponds to $\overline{A}$.
        \item if $\overline{A}:A=3$, then there exists $P\in\mathcal{P}_{w+3,6}$ such that $P_1$ corresponds to $\overline{A}$ and $P_2$ corresponds to $A$.
        \item if $\overline{A}:A=\frac{1}{2}$, then there exists $P\in\mathcal{P}_{w+3,6}$ such that $P_1$ corresponds to $A$ and $P_2$ corresponds to $\overline{A}$.
    \end{enumerate}
\end{theorem}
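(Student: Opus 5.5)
We argue by induction on the width $w$ (equivalently, on the period $m=w+3$), mirroring the inductive structure of the proof of Proposition \ref{ratio-classification}. We treat cases (1) and (3); cases (2) and (4) follow by exchanging the roles of $A$ and $\overline{A}$, since $A:\overline{A}$ is the reciprocal of $\overline{A}:A$. We read the last case as $\overline{A}:A=\frac{1}{3}$.

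The base cases are the quadrilateral and the hexagon. For the quadrilateral, $q(A)=(1,2,1,2)$ and $q(\overline{A})=(2,1,2,1)$ with $\overline{A}:A=2$. Here $P$ is the single quadrilateral. $P_1$ uses the diagonal at vertex $1$, giving $(2,1,2,1)=q(\overline{A})$, and $P_2$ gives $(1,2,1,2)=q(A)$. For the hexagon, $q(A)=(1,3,1,3,1,3)$ and $q(\overline{A})=(3,1,3,1,3,1)$, and $P$ is the single hexagon; the check is immediate in the same way. The same base cases handle any cyclic relabelling, because the factor $\overline{A}:A$ fixes which parity class carries the larger entries.

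For the inductive step, suppose $\overline{A}:A=2$. By Proposition \ref{ratio-classification}(1) there is a $j$ with $(a_j,a_{j+1})\in\{(1,2),(2,1)\}$ and $(\overline{a}_j,\overline{a}_{j+1})=(a_{j+1},a_j)$. We perform the $2$-cuts at $j$ on both sequences. By Proposition \ref{double cut} and Remark \ref{product-preservation}, the results $(a'_i)$ and $(\overline{a}'_i)$ satisfy the same product relation with the same factor $2$. Moreover $(a'_i)$ is a quiddity cycle, since by Remark \ref{cut polygons} the cut removes a triangulated quadrilateral. The last part of Proposition \ref{ratio-classification} then shows that $(\overline{a}'_i)$ is also a quiddity cycle. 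We relabel the vertices of the $(m-2)$-gon so that the parity of labels is preserved away from the cut. This is possible because we remove two consecutive vertices, although the parity of the position of the edge $\{1,2\}$ needs care, and we can rotate so that $j\notin\{1,2\}$, using $m\ge 6$. The induction hypothesis then gives $P'\in\mathcal{P}_{m-2,4}$ with $P'_1\leftrightarrow\overline{A}'$ and $P'_2\leftrightarrow A'$.

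We define $P$ by gluing back the quadrilateral on vertices $j-1,j,j+1,j+2$ along the edge $(j-1,j+2)$ of $P'$. By Proposition \ref{count triangles}, the quiddity of $P_2$ at a vertex $k$ is $P(k)$ or $2P(k)$ according to parity. Gluing adds one quadrilateral at $j-1$ and $j+2$, and these two vertices have opposite parities. Hence the quiddities of $P_2$ are those of $P'_2$ with $+1$ or $+2$ at the two ends, and $1,2$ or $2,1$ at $j,j+1$. These are exactly the local modifications of the $2$-glue in Figure \ref{fig:n-cut}, with the type (a) or (b) forced by the parities. Therefore $q(P_2)=q(A)$ and $q(P_1)=q(\overline{A})$, and Theorem \ref{Conway-Coxeter-thm} identifies the triangulations. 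The $\overline{A}:A=3$ case is identical, using the $4$-cut of Proposition \ref{ratio-classification}(2), gluing a hexagon, and applying the hexagon analogue of Proposition \ref{count triangles}, whose increments are $+1$ and $+3$.

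The main obstacle is bookkeeping. We must make sure that, after cutting, the relabelled $(m-2)$- or $(m-4)$-gon has the parity convention of Definitions \ref{4-triangulations} and \ref{6-triangulations} matching the scalar factor, so that $P'_1$ (rather than $P'_2$) corresponds to $\overline{A}'$. Rather than tracking labels, we plan to argue directly via Proposition \ref{count triangles}: the quiddity of $P_1$ equals $2P(k)$ exactly on one parity class. This identifies which of the two triangulations is $A$ purely from the scalar factor, using the fact that the alternating factor is preserved under cutting.
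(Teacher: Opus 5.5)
Your proposal is correct and follows essentially the same route as the paper: induction on the width, simultaneous $2$-cuts (resp.\ $4$-cuts) of $A$ and $\overline{A}$ at an aligned pair supplied by Proposition~\ref{ratio-classification}, preservation of the scalar factor by Proposition~\ref{double cut}, and regluing the quadrilateral (resp.\ hexagon) onto $P'$. Your identification of $A$ with $P_2$ via the parity count of Proposition~\ref{count triangles} is a more explicit form of the paper's closing sentence (which, incidentally, swaps $P_1$ and $P_2$ relative to the statement), and you are right to read case (4) as $\overline{A}:A=\frac{1}{3}$.
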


\begin{proof}
    We shall only prove the first case as the other cases are similar. We prove by induction on $w$. The cases $w=-1,0,1$ are trivial and the case $w=2$ is implied by Proposition \ref{distinct-implies-odd}.
    
    Suppose the statement is true for all $w<m$ for some $m\in\mathbb{Z}^+$, when $w=m$, by Proposition \ref{ratio-classification}, the first row of $A$ (denoted by $(a_i)_{i\in\mathbb{Z}}$) contains $(1,2)$ or $(2,1)$ as a subsequence. Without loss of generality assume that $(a_j,a_{j+1})=(1,2)$  (therefore $j$ is odd and the corresponding subsequence in $\overline{A}$ (denoted by $(\overline{a}_i)_{i\in\mathbb{Z}}$) is $(2,1)$). If we perform a 2(a)-cut at $j$ on $(a_i)$ to obtain the sequence $(a'_i)$ and 2(b)-cut at $j$ on $(\overline{a}_i)$ to obtain the sequence $(\overline{a}'_i)$, by Proposition \ref{double cut}, the $(\overline{a}'_i):(a'_i)=2$. By Remark \ref{cut polygons}, $(\overline{a}'_i)$ and $(a'_i)$ are quiddity cycles of $w+1$-gons, therefore they are the first rows of some frieze patterns of width $w-2$, say $A'$ and $\overline{A}'$ respectively. By induction hypothesis, there is $P'\in\mathcal{P}_{w+1,4}$ such that $P'_1$ corresponds to $\overline{A}'$ and $P'_2$ corresponds to $A'$. Consider $P\in\mathcal{P}_{w+3,4}$ given by gluing a quadrilateral on the edge $j-1,j+2$ of $P'$. Since vertex $j-1$ (which is even) in the triangulation that corresponds to $A$ is connected to arcs not in $P'$, $P_1$ corresponds to $A$ and $P_2$ corresponds to $\overline{A}$.
\end{proof}

\begin{theorem}\label{Y-frieze from frieze numnber}
    Let $\lvert p_{w}(\operatorname{Frieze}(w))\rvert$ denote the cardinality of the image of $p_w$. Namely, it is the number of $\mathbf{Y}$-frieze patterns of width $w$ arising, under $p_w$, from frieze patterns. Then we have:

\[
\lvert p_{w}(\operatorname{Frieze}(w))\rvert = \begin{cases}
C_{w+1}^{(2)},
&\text{if $w$ is even};\\[1.5em]
C_{w+1}^{(2)}-C_{\frac{w+1}{2}}^{(3)},
&\text{if $w\equiv1\text{ }(\operatorname{mod}4)$};\\[1.5em]
C_{w+1}^{(2)}-C_{\frac{w+1}{2}}^{(3)}-C_{\frac{w+1}{4}}^{(5)},
&\text{if $w\equiv3\text{ }(\operatorname{mod}4)$}.
\end{cases}
\]

\noindent where $C_{p}^{(q)}:=\frac{1}{(q-1)p+1}{qp\choose p}$ are the Fuss-Catalan numbers.
\end{theorem}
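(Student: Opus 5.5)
We count the image of $p_w$ by counting $\mathbf{Y}$-equivalence classes of frieze patterns. By Theorem \ref{Conway-Coxeter-thm}, $|\operatorname{Frieze}(w)|$ equals the number of triangulations of a $(w+3)$-gon, which is the Catalan number $C_{w+1}=C^{(2)}_{w+1}$. By Proposition \ref{distinct-bound}, each fibre of $p_w$ has size one or two. Hence
\[
|p_w(\operatorname{Frieze}(w))| = C^{(2)}_{w+1} - N_w,
\]
where $N_w$ is the number of unordered pairs $\{A,\overline{A}\}$ with $A\neq\overline{A}$ and $A\sim_{\mathbf{Y}}\overline{A}$. By Proposition \ref{distinct-implies-odd}, $N_w=0$ for $w$ even, which gives the first case.

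For $w$ odd, we split the pairs by their alternating scalar factor. By Proposition \ref{ratio-classification}, the factor of a pair lies in $\{\frac12,2\}$ or in $\{\frac13,3\}$. Swapping the roles of $A$ and $\overline{A}$ inverts the factor, so each unordered pair is counted once by requiring $\overline{A}:A\in\{2,3\}$.

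Theorem \ref{equivalence} gives the following maps:
\begin{enumerate}[label=\normalfont(\roman*)]
\item each pair with factor $2$ to some $P\in\mathcal{P}_{w+3,4}$ with $\{P_1,P_2\}$ realising $\{\overline{A},A\}$;
\item each pair with factor $3$ to some $P\in\mathcal{P}_{w+3,6}$ in the same way.
\end{enumerate}
Conversely, the corollary before Theorem \ref{equivalence} shows that for any $P\in\mathcal{P}_{w+3,4}$ (resp.\ $\mathcal{P}_{w+3,6}$), the triangulations $P_1$ and $P_2$ have quiddity cycles related by the factor $2$ (resp.\ $3$). Since $k/2\neq1$, these cycles are distinct. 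They satisfy $a_ia_{i+1}=\overline{a}_i\overline{a}_{i+1}$, so the two friezes are $\mathbf{Y}$-equivalent.

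To see that these correspondences are bijections, we must check injectivity, that is, that $P$ can be recovered from the pair. The diagonals of $P$ are exactly the diagonals common to $P_1$ and $P_2$. Indeed, within each quadrilateral the two chosen diagonals are different. Within each hexagon the two order-$2$ symmetric triangulations share no diagonal. So $P$ is determined by $\{P_1,P_2\}$ and the maps are injective. Therefore
\[
N_w=|\mathcal{P}_{w+3,4}|+|\mathcal{P}_{w+3,6}|.
\]

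It remains to use the classical enumeration of dissections: the number of partitions of an $((k-2)p+2)$-gon into $k$-gons is the Fuss–Catalan number $C^{(k-1)}_p$.
\begin{itemize}
\item For $k=4$, $w+3=2p+2$ gives $p=\frac{w+1}{2}$ and $|\mathcal{P}_{w+3,4}|=C^{(3)}_{(w+1)/2}$. This is nonempty only for $w$ odd.
\item For $k=6$, $w+3=4p+2$ requires $w\equiv 3\pmod 4$, with $p=\frac{w+1}{4}$ and $|\mathcal{P}_{w+3,6}|=C^{(5)}_{(w+1)/4}$. Otherwise $\mathcal{P}_{w+3,6}$ is empty.
\end{itemize}
Substituting these counts gives the remaining two cases.

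The main obstacle is the bijectivity step. We must make sure that Theorem \ref{equivalence} together with Definitions \ref{4-triangulations} and \ref{6-triangulations} really gives a well-defined map from pairs to dissections. In particular, a single pair must not arise from two different $P$, and a pair with factor $2$ must not coincide with one of factor $3$. The factors differ, so the second concern disappears. The first is handled by the "common diagonals" argument above, which should be checked carefully in the hexagon case.
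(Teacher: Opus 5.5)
Your proposal is correct and follows the same route as the paper. The paper's one-line proof invokes Theorem \ref{equivalence} to write the image size as $|\operatorname{Frieze}(w)|-|\mathcal{P}_{w+3,4}|-|\mathcal{P}_{w+3,6}|$ and then cites the Fuss--Catalan dissection counts; your fibre-size bound and your common-diagonals check that $P\mapsto\{P_1,P_2\}$ is injective simply make explicit the bijection that proof takes for granted. One small correction to wording you inherited from the paper: the hexagon triangulations actually used, and pictured, have rotational symmetry of order $3$ (not $2$), namely the triangle on the odd versus the even vertices of each hexagon, which is exactly why the two share no diagonal.
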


\begin{proof}
    By Theorem \ref{equivalence}, the number of $\mathbf{Y}$-frieze patterns of width $n$ generated from frieze patterns is given by $|\operatorname{Frieze}(w)|-|\mathcal{P}_{w+3,4}|-|\mathcal{P}_{w+3,6}|$, and the value of each term is calculated in \cite{hilton1991catalan}.
\end{proof}

\section{Classification of $\mathbf{Y}$-frieze patterns}\label{Y-frieze}

We introduce another way to denote a part of the entries in a $\mathbf{Y}$-frieze pattern for the sake of better presentation. Given a $\mathbf{Y}$-frieze pattern, we denote the entries in the first non-trivial row by $b_1,b_2,...$ (so $b_{i}=b_{i,i}$). Moreover, we denote the entries in the diagonal starting from $b_1$ by $f_1,f_2,...$ (so $f_{i}=b_{1,i}$) and from $b_2$ by $g_1,g_2,...$ (so $g_{i}=b_{2,i}$). By convention, we set $f_{-1}=-1,f_0=f_{-2}=0$.

\[
\setlength{\arraycolsep}{2pt}
\begin{matrix}
    0&&0&&0&&0&&0&&0&&0&&0&&0\\
    &-1&&-1&&-1&&-1&&-1&&\cdots&&-1&&-1\\
    &&0&&0&&0&&0&&\cdots&&0&&0\\
    &&&b_1&&b_2&&\dots&&\dots&&b_{n-1}&&b_n\\
    &&&&f_2&&g_2&&\dots&&\\
    &&&&&\ddots&&\ddots&&\iddots\\
    &&&&&&f_{n-2}&&g_{n-2}&&\iddots\\
    &&&&&&&f_{n-1}&&g_{n-1}&&\\
    &&&&&&&&f_n
\end{matrix}
\]

\begin{definition}[$\mathbf{Y}$-$n$-cut]
\label{Y-n-cut}
    Let $n \in \{2,3,4\}$ and $(b_i)_{i\in\mathbb{Z}}$ be an integer sequence with period $m \ge n+3$. Given $j \in \{1,\ldots, m\}$ we define a new integer sequence $(b'_i)_{i\in\mathbb{Z}}$, called an $\mathbf{Y}$-\textbf{\boldmath$n$-cut} of $(b_i)_{i\in\mathbb{Z}}$ at $j$, as follows:
    
\begin{enumerate}[label=\normalfont(\arabic*)]
        \item $\mathbf{Y}$-\textbf{$2$-cut}: if $b_j=1$, then $(b'_i)_{i\in\mathbb{Z}}$ is the $(m-2)$-periodic sequence formed by applying the following replacement to each $m$-period of $(b_i)_{i\in\mathbb{Z}}$:
        \[
        (b_{j-2},b_{j-1},b_j,b_{j+1},b_{j+2})
        \mapsto
        \left(b_{j-2}-\tfrac{2(b_{j-2}+1)}{b_{j-1}+1},\tfrac{(b_{j-1}-1)(b_{j+1}-1)}{2}-1,b_{j+2}-\tfrac{2(b_{j+2}+1)}{b_{j+1}+1}\right).
        \]
        
        \item $\mathbf{Y}$-\textbf{$3$-cut}: if $(b_j,b_{j+1})=(2,2)$, then $(b'_i)_{i\in\mathbb{Z}}$ is the $(m-3)$-periodic integer sequence formed by applying the following replacement to each $m$-period of $(b_i)_{i\in\mathbb{Z}}$:
        \[
        (b_{j-2},b_{j-1},b_j,b_{j+1},b_{j+2},b_{j+3})
        \mapsto
        \left(b_{j-2}-\tfrac{2(b_{j-2}+1)}{b_{j-1}+1},(b_{j-1}-1)(b_{j+2}-1)-1,b_{j+3}-\tfrac{2(b_{j+3}+1)}{b_{j+2}+1}\right).
        \]

        \item $\mathbf{Y}$-\textbf{$4$-cut}: if $(b_j,b_{j+1},b_{j+2})=(2,2,2)$ then $(b'_i)_{i\in\mathbb{Z}}$ is the $(m-4)$-periodic integer sequence formed  by applying the following replacement to each $m$-period of $(b_i)_{i\in\mathbb{Z}}$:
        \[
        (b_{j-2},b_{j-1},b_j,b_{j+1},b_{j+2},b_{j+3},b_{j+4})
        \mapsto
        \left(b_{j-2}-\tfrac{3(b_{j-2}+1)}{b_{j-1}+1},\tfrac{(b_{j-1}-2)(b_{j+3}-2)}{3}-1,b_{j+4}-\tfrac{3(b_{j+4}+1)}{b_{j+3}+1}\right).
        \]

\end{enumerate}

\end{definition}

\begin{remark}
    In the case where $(b_i)_{i\in\mathbb{Z}}$ is the quiddity sequence of a $\mathbf{Y}$-frieze pattern, the sequence resulting from a $\mathbf{Y}$-$n$-cut is the quiddity sequence of a $\mathbf{Y}$-frieze pattern, as we shall see in Propositions \ref{Y-2-cut}, \ref{Y-3-cut} and \ref{Y-4-cut}. Furthermore, for each $n \in \{2,3,4\}$, note that $\mathbf{Y}$-$n$-cuts and $n$-cuts are related by the following commutative diagram:

\[
    \begin{tikzcd}
        \operatorname{Frieze}(w)\arrow[rr,"n\text{-cut}"]\arrow[d,"p_{w}"]&&\operatorname{Frieze}(w')\arrow[d,"p_{w'}"]\\
        \operatorname{YFrieze}(w)\arrow[rr,"\mathbf{Y}\text{-}n\text{-cut}"]&&\operatorname{YFrieze}(w')
    \end{tikzcd}
\]

\noindent where $w' = w - n$.

Indeed, we shall chase the diagram for the $n=2$ case with respect to a $2$(a)-cut, and verify commutativity. In particular, consider a frieze pattern $A$ with quiddity cycle $q(A) = (a_1,a_2,...,a_{w+3})$ such that $(a_j,a_{j+1})=(1,2)$. 

If we perform a $2$(a)-cut at $j$ on $q(A)$, we obtain $(a_{j-2},a_{j-1}-2,a_{j+2}-1,a_{j+3})$ in the $j-2^{th}$ to the $j+1^{th}$ entries, which is part of the quiddity cycle of a frieze pattern $A'$ by Theorem \ref{Conway-Coxeter-thm}. Furthermore, note that the $\mathbf{Y}$-frieze pattern $p_{w'}(A')$ arising from $A'$ has: $$(a_{j-2}a_{j-1}-1-2a_{j-2},(a_{j-1}-2)(a_{j+2}-1)-1,a_{j+2}a_{j+3}-1-a_{j+3})$$ in the $j-2^{th}$ to $j^{th}$ entries of its quddity cycle, and all other entries remain unchanged.

On the other hand, consider the $\mathbf{Y}$-frieze pattern $p_w(A)$ arising from $A$. The quiddity cycle of $p_w(A)$ has $(a_{j-2}a_{j-1}-1,a_{j-1}-1,1,2a_{j+2}-1,a_{j+2}a_{j+3}-1)$ in the $j-2^{th}$ to $j+2^{th}$ entries. Performing a $\mathbf{Y}$-$2$-cut at $j$ on $q\left(p_w(A)\right)$, we get: $$\left(a_{j-2}a_{j-1}-1-\frac{2a_{j-2}a_{j-1}}{a_{j-1}},\frac{(a_{j-1}-2)(2a_{j+2}-2)}{2}-1,a_{j+2}a_{j+3}-1-\frac{2a_{j+2}a_{j+3}}{2a_{j+2}}\right)$$ in the $j-2^{th}$ to $j^{th}$ entries, and all other entries remain unchanged. 

Therefore, regardless of the chosen composition in the above diagram, we obtain the same quiddity sequence, and hence the same $\mathbf{Y}$-frieze pattern.

Similarly, one can verify the commutativity of the diagram for all remaining types of cuts.

\end{remark}

We will now build the tools necessary to show that in the first non-trivial row a $\mathbf{Y}$-frieze pattern, one of the $\mathbf{Y}$-$n$-cuts is possible.

\begin{proposition}
\label{y-diagonals}
    Consider a $\mathbf{Y}$-frieze pattern of width $w$. Then for all $n \in \mathbb{Z}$ satisfying $0 \leq n \leq w+3$ we have
    
    \begin{equation}
    \label{y-diagonals1}
        f_n(f_{n-1}+f_{n-2}+1)=(b_nf_{n-1}-(f_{n-2}+1))(f_{n-1}+1).
    \end{equation}

    Equivalently, if $1<n<w+3$ then
\begin{equation}
\label{y-diagonals2}
f_n=\frac{(b_nf_{n-1}-(f_{n-2}+1))(f_{n-1}+1)}{(f_{n-1}+f_{n-2}+1)}.\end{equation}
\end{proposition}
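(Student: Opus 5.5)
The plan is to prove \eqref{y-diagonals1} by strong induction on $n$. We prove it simultaneously for \emph{every} diagonal, not just the one through $b_1$. Since the definition of a $\mathbf{Y}$-frieze pattern is invariant under translation, the same statement for the diagonal $(g_k)$ starting at $b_2$ reads
\[
g_k(g_{k-1}+g_{k-2}+1)=(b_kg_{k-1}-(g_{k-2}+1))(g_{k-1}+1),
\]
with the conventions $g_0=g_{-2}=0$ and $g_{-1}=-1$ coming from the trivial rows above $b_2$. This is what makes the induction work: the hypothesis at step $n$ is available for both the $f$-diagonal and the $g$-diagonal at all smaller indices.

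\textbf{Base cases.} For $n=0,1$ both sides of \eqref{y-diagonals1} vanish, because $f_{-1}+1=0$ and $f_{-2}+1=0$. For $n=2$ the identity reads $f_2(b_1+1)=(b_1b_2-1)(b_1+1)$. This follows from the $\mathbf{Y}$-diamond rule applied to the diamond with $W=b_1$, $E=b_2$, $N=0$, $S=f_2$, which gives $b_1b_2=1+f_2$.

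\textbf{Inductive step.} The only relations between the two diagonals are the $\mathbf{Y}$-diamond rules
\[
f_{k-1}g_k=(1+f_k)(1+g_{k-1}),
\]
one for each $k$ (with $W=f_{k-1}$, $E=g_k$, $N=g_{k-1}$, $S=f_k$). We argue as follows.

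\begin{enumerate}[label=\normalfont(\roman*)]
\item Take the diamond relation at index $n$ and solve it for $1+f_n$:
\[
1+f_n=\frac{f_{n-1}g_n}{1+g_{n-1}}.
\]
\item Substitute for $g_n$ using the inductive identity on the $g$-diagonal. Here one must be careful with the indexing, since $g_k=b_{2,k}$ sits one step further along the top row, so the top-row entry appearing at this index needs to be matched correctly.
\item Eliminate $g_{n-1}$ and $g_{n-2}$ using the diamond relations at indices $n-1$ and $n-2$, expressing them through $f_{n-1},f_{n-2},f_{n-3}$ and $g_{n-3}$.
\item Use the inductive hypothesis for $f_{n-1}$ to remove the leftover $g_{n-3}$ and $f_{n-3}$.
\end{enumerate}

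What remains should be a rational identity in $b_n,f_{n-1},f_{n-2}$ that can be cleared of denominators and checked directly.

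\textbf{Expected main obstacle.} I expect the difficulty to lie in this elimination, in two respects. First, the algebra must collapse neatly, and it is plausible that one should instead carry along a conserved auxiliary quantity, for example $(f_{n-1}+f_{n-2}+1)/(f_{n-1}+1)$ or a Wronskian-type expression coupling the $f$- and $g$-diagonals, and prove that it propagates. Second, one must avoid dividing by quantities that vanish. This is why the statement is phrased multiplicatively in \eqref{y-diagonals1}.

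\textbf{Boundary cases.} For $1<n<w+3$ the relevant entries are positive, or equal to the zero in the bottom rows, so we get $f_{n-1}+f_{n-2}+1>0$ and may divide; this gives the equivalent form \eqref{y-diagonals2}. The endpoint cases $n=w+2$ and $n=w+3$, where $f_{w+1}=0$, $f_{w+2}=-1$ and $f_{w+3}=0$, I would verify directly from the boundary values and the diamonds in the bottom trivial rows rather than through the general induction.
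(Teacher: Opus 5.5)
Your framework matches the paper's. You induct on rows, with the hypothesis available on every diagonal by translation invariance. You then use the diamond relation $1+f_n=f_{n-1}g_n/(1+g_{n-1})$ and substitute the $g$-diagonal identity for $g_n$. The gap is that the proposal stops exactly at the step that \emph{is} the proof, and the elimination you sketch in (iii)--(iv) does not work as stated. Step (iii) solves the diamond at index $n-2$, $f_{n-3}g_{n-2}=(1+f_{n-2})(1+g_{n-3})$, for $g_{n-2}$. That divides by $f_{n-3}$, which is $f_0=0$ when $n=3$. Step (iv) cannot remove $g_{n-3}$, because the hypothesis for $f_{n-1}$ involves no $g$ at all. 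It would only bring in $b_{n-1}$, which has no place in the target identity.

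The missing observation is that, after step (ii),
\[
1+f_n=\frac{f_{n-1}\bigl(b_ng_{n-1}-(g_{n-2}+1)\bigr)}{g_{n-1}+(g_{n-2}+1)}
\]
is homogeneous of degree zero in the pair $(g_{n-1},\,g_{n-2}+1)$. Hence the single diamond relation at index $n-1$, namely $(f_{n-1}+1)(g_{n-2}+1)=f_{n-2}g_{n-1}$, removes every $g$. Multiply numerator and denominator by $f_{n-1}+1$, substitute, and cancel $g_{n-1}$, which is a positive entry for $3\le n\le w+2$. This gives
\[
1+f_n=\frac{f_{n-1}\bigl(b_n(f_{n-1}+1)-f_{n-2}\bigr)}{f_{n-1}+f_{n-2}+1}.
\]
Subtracting $1$ and using $f_{n-1}f_{n-2}+f_{n-1}+f_{n-2}+1=(f_{n-1}+1)(f_{n-2}+1)$ gives \eqref{y-diagonals2}. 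No auxiliary conserved quantity is needed.

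There are also two smaller slips. First, for $n=0,1$ the vanishing comes from $f_0=0$ and $f_{-1}+1=0$; note that $f_{-2}+1=1$, not $0$. Second, your diamond relation uses $g_k=b_{2,k}$, and that is also what makes the $b_k$ in your $g$-identity correct. In this indexing the trivial values are $g_1=0$, $g_0=-1$, $g_{-1}=0$, not the ones you list. Handling $n=w+2$ and $n=w+3$ directly is fine: there both sides equal $-(f_w+1)$ and $0$, respectively.
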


\begin{proof}
    Recall that, by definition, we have $f_{-1}=-1$,$f_0 = 0$ and $f_1 = b_1$. Hence, for $n=0$ or $1$, both sides of equation (\ref{y-diagonals1}) are $0$. Similarly, the equation holds when $n=w+3$. Moreover, for $n=2$, we have $$f_2=b_1b_2-1 = \frac{(b_2f_{1}-(f_{0}+1))(f_{1}+1)}{(f_{1}+f_{0}+1)}$$ where the first equality follows from the $\mathbf{Y}$-diamond rule.

    Now, for $w+3>k \geq 3$, assume equation (\ref{y-diagonals2}) holds for all entries in the ${k-1}^{th}$ row of the $\mathbf{Y}$-frieze. We will show the equation also holds for $n=k$.

    By this induction hypothesis applied to $g_{k-1}$ we have:

    \begin{equation*}
g_{k-1}=\frac{(b_{k}g_{k-2}-(g_{k-3}+1))(g_{k-2}+1)}{(g_{k-2}+g_{k-3}+1)}.
\end{equation*}

Moreover, $f_k =\frac{f_{k-1}g_{k-1}}{g_{k-2}+1}-1$ and $g_{k-3}+1 = \frac{f_{k-2}g_{k-2}}{f_{k-1}+1}$ by the $\mathbf{Y}$-diamond rule, so we obtain:

\begin{align*}
 f_k &=\frac{f_{k-1}g_{k-1}}{g_{k-2}+1}-1 =\frac{f_{k-1}\frac{(b_kg_{k-2}-(g_{k-3}+1))(g_{k-2}+1)}{g_{k-2}+g_{k-3}+1}}{g_{k-2}+1}-1 = \frac{f_{k-1}(b_kg_{k-2}-(g_{k-3}+1))}{g_{k-2}+(g_{k-3}+1)}-1 \\
 &= \frac{f_{k-1}\left(b_kg_{k-2}-\left(\frac{f_{k-2}g_{k-2}}{f_{k-1}+1}\right)\right)}{g_{k-2}+\left(\frac{f_{k-2}g_{k-2}}{f_{k-1}+1}\right)}-1 =\frac{f_{k-1}(b_k(f_{k-1}+1)-f_{k-2})}{f_{k-1}+1+f_{k-2}}-1 \\ &
 =\frac{(b_kf_{k-1}-(f_{k-2}+1))(f_{k-1}+1)}{f_{k-1}+f_{k-2}+1}.
\end{align*}

\end{proof}

From this, we can retrieve the following result:

\begin{corollary}[Theorem 5.2 in \cite{germain2023frieze}]
    We denote by $\mathbf{Y}_w$ the unique glide reflection whose square amounts to a horizontal translation by $w+3$. Every $\mathbf{Y}$-frieze pattern of width $w$ is $\mathbf{Y}_w$-invariant. 
\end{corollary}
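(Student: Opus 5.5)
The plan is to read the glide-reflection invariance off Proposition \ref{y-diagonals}, used as a recursion along diagonals. Concretely, $\mathbf{Y}_w$-invariance means that the entry $b_{i,j}$ equals the entry reflected across the middle row and shifted. With the indexing conventions of the paper, this amounts to the identity
\[b_{i,j}=b_{j+1,\,i+w+2}\qquad\text{for all } i,j \text{ with } -3\le j-i\le w+2 .\]
Equivalently, the diagonal starting at $b_i$ and read downward coincides with the anti-diagonal ending at $b_{i+w+2}$ and read upward. Applying the glide twice gives translation by $w+3$, so invariance also recovers the $(w+3)$-periodicity. I would therefore prove this identity directly, by comparing two recursions that generate the same sequence.

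\textbf{Step 1 (top-down recursion).} Fix $i$; by translating the pattern we may take $i=1$. Proposition \ref{y-diagonals} shows that the diagonal $f_n=b_{1,n}$ is determined by the quiddity entries $b_1,b_2,\dots$ through
\[f_n(f_{n-1}+f_{n-2}+1)=(b_nf_{n-1}-f_{n-2}-1)(f_{n-1}+1),\]
starting from $f_{-1}=-1$, $f_0=0$. Evaluating at $n=w+1,w+2,w+3$ and using the boundary values of the frieze, the diagonal ends with $f_{w}=0$, $f_{w+1}=-1$, $f_{w+2}=0$.

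\textbf{Step 2 (reverse recursion and symmetry).} Next I would show that the same relation, run backwards, governs the diagonal read from the bottom. The relation is a three-term recurrence, and I would check that it is symmetric under $f_{n-2}\leftrightarrow f_n$ after the appropriate index shift. Rewriting it as
\[(f_n+1)(f_{n-2}+1)=f_{n-1}\bigl(b_n(f_{n-1}+1)-f_{n-1}-f_{n-2}-f_n\bigr)\cdots\]
or a similar normal form should exhibit this symmetry. Solving for $b_n$ gives
\[b_n=\frac{f_n(f_{n-1}+f_{n-2}+1)}{f_{n-1}(f_{n-1}+1)}+\frac{f_{n-2}+1}{f_{n-1}},\]
and I would aim to show that this expression is invariant under swapping $f_n$ and $f_{n-2}$.

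\textbf{Step 3 (conclusion).} Granting that symmetry, the anti-diagonal ending at $b_{w+3}$, read upward, satisfies the same recursion with the same initial data $(-1,0)$ at its start, namely the bottom boundary values $-1,0$. It also uses the same coefficients $b_n$ in reversed order, because of the symmetry in Step 2. Uniqueness of solutions of the recursion then forces the two sequences to agree entry by entry, which is exactly the identity $b_{1,j}=b_{j+1,w+3}$. Applying this to every $i$ by translation gives $\mathbf{Y}_w$-invariance.

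\textbf{Main obstacle.} The hard step is the symmetry check in Step 2. Beyond the algebra, there is a real difficulty: the coefficient $b_n$ sits in the middle position, so reversing the recursion does not obviously reproduce the quiddity entries on the other side of the frieze. If the algebraic symmetry fails, I would fall back on a second approach. That approach compares the two diagonals $f$ and $g$, through the $\mathbf{Y}$-diamond rule, with their reflected counterparts, and argues by induction on rows exactly as in the proof of Proposition \ref{y-diagonals}. The base cases would be the two boundary rows: row $n=w+1$ gives $f_{w}=0=b_{w+1,w+1}$-type identifications, and row $w+2$ matches $b_{i,i+w+1}=-1$ with the $-1$ row at the top.
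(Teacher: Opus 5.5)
\textbf{Gap: Step 3's uniqueness fails because the recursion is degenerate at the boundary.}

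Your Step 2 is not actually the obstacle. Clearing denominators in your formula gives
\[
b_n f_{n-1}(f_{n-1}+1)=f_{n-1}(f_n+f_{n-2}+1)+(f_n+1)(f_{n-2}+1),
\]
which is visibly symmetric under $f_n\leftrightarrow f_{n-2}$.

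You should fix two indexing slips first. In the paper's conventions the glide is $b_{i,j}=b_{j+2,\,i+w+1}$; your formula sends the first row onto the row of $-1$'s. Also, the diagonal ends with $f_{w+1}=0$, $f_{w+2}=-1$, $f_{w+3}=0$, whereas $f_w=b_{1,w}$ is the last nontrivial entry. With these corrected, the reversed anti-diagonal $\tilde e_k:=b_{k+2,\,w+2}$ does satisfy (\ref{y-diagonals1}). This uses the left--right mirror of Proposition \ref{y-diagonals} together with your symmetry. It has the same coefficients $b_k$ and the same boundary values $(\tilde e_{-1},\tilde e_0)=(-1,0)$.

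The problem is that the recursion is degenerate exactly at those boundary values. The coefficient of $f_n$ is $f_{n-1}+f_{n-2}+1$, which vanishes at $(f_{n-2},f_{n-1})=(-1,0)$, so for $n=1$ the relation reads $0=0$. Hence the data $(-1,0)$ leave the first nontrivial entry free: with $f_1=t$ one gets $f_2=b_2t-1$, and so on. Uniqueness therefore requires the extra input $\tilde e_1=f_1$, i.e.\ $b_{3,w+2}=b_1$: the last nontrivial row is a shifted copy of the first. That identity carries essentially all the content of the statement, and your proposal never proves it.

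Running the recursion backward does not help. It is equally degenerate at $(f_{w+1},f_{w+2})=(0,-1)$ and would need $f_w=b_{w+2}$. Your fallback sketch only matches boundary rows, and its base case $f_w=0$ is wrong, so it does not supply the identity either.

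\textbf{The missing idea: use the row of zeros, as the paper does.} Apply Proposition \ref{y-diagonals} to the diagonal starting at $b_2$, at the point where it reaches that row ($b_{2,w+2}=0$). Since $b_{2,w+1}+1\neq 0$, this forces $b_{w+2}\,b_{2,w+1}=b_{2,w}+1$. The $\mathbf{Y}$-diamond rule on the diamond with bottom entry $b_{1,w+1}=0$ gives $b_{1,w}\,b_{2,w+1}=b_{2,w}+1$. Cancelling $b_{2,w+1}\neq 0$ yields $b_{1,w}=b_{w+2}$.

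Given this, your symmetric recursion does recover the whole diagonal. Solve it backward from $(f_{w+1},f_w)=(\tilde e_{w+1},\tilde e_w)$, where the relevant coefficient $f_n+f_{n-1}+1$ is now positive. At that stage, though, a shorter finish is available. The vertically reflected array is again a $\mathbf{Y}$-frieze, since the rule is symmetric in $N$ and $S$. Its first row is a shift of the original first row, so it equals the shifted original.
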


\begin{proof}
    Set $n=w+1$, then we have $0=\frac{(b_{w+2}g_{w}-(g_{w-1}+1))(g_{w-2}+1)}{(g_{w}+g_{w-1}+1)}$, therefore $b_{w+2}g_w=g_{w-1}+1$. Now observe that by the $\mathbf{Y}$-diamond rule, $f_wg_w=g_{w-1}+1$. Since $g_w\neq0$, $f_w=b_{w+2}$.
\end{proof}

\begin{remark}
    As a result, $\mathbf{Y}$-frieze patterns are $w+3$-periodic.
\end{remark}

By direct computation from equation \ref{y-diagonals2} we can obtain the following equation:

\begin{corollary}\label{reduction}
    $f_{n}+f_{n-1}+1=\frac{(b_n+1)f_{n-1}(f_{n-1}+1)}{f_{n-1}+f_{n-2}+1}$ for $0\leq n\leq w+3$
\end{corollary}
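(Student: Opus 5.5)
The identity follows from equation (\ref{y-diagonals2}) of Proposition \ref{y-diagonals} by direct algebra. I will add $f_{n-1}+1$ to both sides and put everything over the common denominator $D:=f_{n-1}+f_{n-2}+1$. Where the division in (\ref{y-diagonals2}) is not permitted, namely at $n=0,1,w+3$, I will check the statement directly from the boundary conventions.

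For $1<n<w+3$, I will write
\[
f_n+f_{n-1}+1=\frac{(b_nf_{n-1}-(f_{n-2}+1))(f_{n-1}+1)+(f_{n-1}+1)D}{D}.
\]
The factor $(f_{n-1}+1)$ is common to both terms in the numerator, so the numerator equals
\[
(f_{n-1}+1)\bigl(b_nf_{n-1}-f_{n-2}-1+f_{n-1}+f_{n-2}+1\bigr)=(f_{n-1}+1)(b_n+1)f_{n-1}.
\]
This is exactly the claimed right-hand side. The cancellation of $f_{n-2}+1$ against the same term inside $D$ is the whole content of the computation.

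The endpoint cases reduce to evaluating both sides with the conventions $f_{-2}=f_0=0$ and $f_{-1}=-1$.
\begin{enumerate}[label=\normalfont(\roman*)]
\item For $n=0$: the left side is $f_0+f_{-1}+1=0$. On the right, the factor $f_{-1}+1=0$ appears in the numerator and the denominator is $f_{-1}+f_{-2}+1=0$. So the right side has the form $0/0$, and the identity for this $n$ must be read in its cleared-denominator form.
\item For $n=1$: the left side is $b_1+0+1=b_1+1$. The right side is $(b_1+1)\cdot 0\cdot 1/(0+(-1)+1)$, again $0/0$.
\end{enumerate}
For these small $n$ I will therefore prove the identity in the multiplied-out form
\[
(f_n+f_{n-1}+1)(f_{n-1}+f_{n-2}+1)=(b_n+1)f_{n-1}(f_{n-1}+1).
\]
This form follows from (\ref{y-diagonals1}) by the same algebra, for every $0\le n\le w+3$.

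For $n=w+3$, I will use that $f_{w+3}=0$ and $f_{w+2}=-1$, since these entries lie on the boundary rows. Then both sides of the multiplied-out form vanish because of the factor $f_{w+2}+1=0$.

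The only real obstacle is interpreting the stated range $0\le n\le w+3$ where denominators vanish. I plan to handle it as above: state the quotient form for $1<n<w+3$, and note that the cleared form, which comes from (\ref{y-diagonals1}), holds on the full range. That covers the cases the later propositions need.
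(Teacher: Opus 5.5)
Your proof is correct and matches the paper, which simply asserts that the identity follows by direct computation from equation~(\ref{y-diagonals2}); that computation is exactly your step of adding $f_{n-1}+1$ to both sides and factoring out $(f_{n-1}+1)$. Your extra care at $n=0,1,w+3$, where the right-hand side is $0/0$ and one should instead use the cleared form obtained from equation~(\ref{y-diagonals1}), is a correct refinement that the paper glosses over.
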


\begin{proposition}
\label{y-diagonals3}
    Consider a $\mathbf{Y}$-frieze pattern of width $w$. Then for all $n \in \mathbb{Z}$ satisfying $1 \leq n \leq w+3$ we have
    
    \begin{equation}
        f_n=b_nf_{n-1}-\frac{(b_{n-1}f_{n-2}-(f_{n-3}+1))(b_{n}+1)}{b_{n-1}+1}-1.
    \end{equation}
\end{proposition}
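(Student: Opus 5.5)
The plan is to derive the identity purely algebraically from Proposition \ref{y-diagonals} and Corollary \ref{reduction}, applied at the two consecutive indices $n$ and $n-1$. Throughout, write $D_n := f_{n-1}+f_{n-2}+1$ and $X_n := b_{n-1}f_{n-2}-(f_{n-3}+1)$. In this notation the claim reads
\[
f_n+1-b_nf_{n-1} = -\frac{(b_n+1)X_n}{b_{n-1}+1}.
\]

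First, assuming $D_n\neq 0$, I would compute the left-hand side using equation (\ref{y-diagonals2}). Putting everything over the common denominator $D_n$, the numerator is
\[
(b_nf_{n-1}-f_{n-2}-1)(f_{n-1}+1)-(b_nf_{n-1}-1)D_n .
\]
Expanding, the $b_nf_{n-1}^2$ and $b_nf_{n-1}$ terms cancel. The remaining terms collapse to $-b_nf_{n-1}f_{n-2}-f_{n-1}f_{n-2}$. This gives
\[
f_n+1-b_nf_{n-1} = -\frac{(b_n+1)f_{n-1}f_{n-2}}{D_n}.
\]
So it remains to show the cleaner identity $(b_{n-1}+1)f_{n-1}f_{n-2} = X_nD_n$.

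For this second step I would use two results at index $n-1$, both taken in multiplied-out form so as to avoid division:
\begin{itemize}
\item[(i)] Corollary \ref{reduction}: $D_n(f_{n-2}+f_{n-3}+1) = (b_{n-1}+1)f_{n-2}(f_{n-2}+1)$;
\item[(ii)] equation (\ref{y-diagonals1}): $f_{n-1}(f_{n-2}+f_{n-3}+1) = X_n(f_{n-2}+1)$.
\end{itemize}
Multiply (ii) by $D_n$ and substitute (i). This yields
\[
(b_{n-1}+1)f_{n-1}f_{n-2}(f_{n-2}+1) = X_nD_n(f_{n-2}+1).
\]
Whenever $f_{n-2}+1\neq 0$, cancelling $f_{n-2}+1$ gives the desired identity.

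The main obstacle is the boundary indices, where the conventions $f_{-2}=f_0=0$ and $f_{-1}=-1$ make $f_{n-2}+1$ or $D_n$ vanish; here we also need $b_{n-1}$ with $n-1\le 0$, which we interpret via the $(w+3)$-periodicity remark. I would check the small cases directly.
\begin{itemize}
\item For $n=1$: the right-hand side is $-(-b_0-1)(b_1+1)/(b_0+1)-1=b_1=f_1$.
\item For $n=2$: it gives $b_2b_1-(b_1\cdot 0-1)(b_2+1)/(b_1+1)\cdot\ldots$, which I would expand to confirm it reduces to $f_2=b_1b_2-1$.
\item For $n=3$, and for $n=w+3$ where $f_{w+3}=0$: again a direct check, using the $\mathbf{Y}$-diamond rule values already used in the proof of Proposition \ref{y-diagonals}.
\end{itemize}
For the interior range $3\le n<w+3$, the entries $f_{n-2}$ and $f_{n-1}$ lie in nontrivial rows and are therefore positive, so $f_{n-2}+1>0$ and $D_n>0$. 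Hence the divisions above are legitimate and the general argument applies.
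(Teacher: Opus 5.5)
Your proposal is essentially the paper's proof. Both arrive at
$f_n+1=b_nf_{n-1}-\frac{(b_n+1)f_{n-1}f_{n-2}}{f_{n-1}+f_{n-2}+1}$
and then eliminate $f_{n-1}$ and $f_{n-1}+f_{n-2}+1$ using (\ref{y-diagonals1}) and Corollary \ref{reduction} at index $n-1$. There are two small differences. You get the first identity by direct algebra from (\ref{y-diagonals2}), whereas the paper re-derives it through the neighbouring diagonal $g$. Your multiplied-out form needs only $f_{n-2}+1\neq 0$ and $D_n\neq 0$, while the paper's final cancellation also divides by $f_{n-2}$.

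The one place to tighten is $n=w+3$.
\begin{itemize}
\item There $D_{w+3}=f_{w+2}+f_{w+1}+1=0$, so the paper's displayed computation does not literally cover this case either.
\item Substituting $f_{w+1}=0$, $f_{w+2}=-1$, $f_{w+3}=0$ turns the claim into $(b_{w+3}+1)\bigl(\frac{f_w+1}{b_{w+2}+1}-1\bigr)=0$, that is, $f_w=b_{w+2}$.
\item This is not a local check from the $\mathbf{Y}$-diamond values used in the proof of Proposition \ref{y-diagonals}. It is exactly the glide symmetry established in the corollary stated right after that proposition, and you should cite it there.
\end{itemize}

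There are also two minor slips.
\begin{itemize}
\item For $n=2$ the bracket is $b_1f_0-(f_{-1}+1)=0$, not $b_1\cdot 0-1$; as you wrote it, the check would not close. In any case $n=2$ is already covered by your general argument, since $f_0+1=1$ and $D_2=b_1+1$ are nonzero.
\item For $n=w+2$, the entry $f_{n-1}=f_{w+1}$ equals $0$ and does not lie in a nontrivial row. The division is still legitimate because $D_{w+2}=f_w+1>0$.
\end{itemize}
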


\begin{proof}
    Recall that, by definition, we have $f_{-1}=-1$, $f_0=f_{-2} = 0$ and $f_1 = b_1$. The $n=1$ case follows directly from this, and the $n=2$ case additionally follows from the $\mathbf{Y}$-diamond rule: $f_1\cdot b_2 = (1+0)(1+f_2)$.

Now, suppose $n>2$. Then we have:

\begin{alignat*}{2}
\quad f_{n}+1 &= \frac{f_{n-1}g_{n-1}}{g_{n-2}+1}
&&
\parbox[t]{7cm}{\raggedright
by the $\mathbf{Y}$-diamond rule}
\\[1ex]
&= \frac{f_{n-1}(b_ng_{n-2}-(g_{n-3}+1))}{g_{n-2}+g_{n-3}+1}
&&
\parbox[t]{5.5cm}{\raggedright \vspace{-7mm}
since $\frac{g_{n-1}}{g_{n-2}+1}=\frac{b_ng_{n-2}-(g_{n-3}+1)}{g_{n-2}+g_{n-3}+1}$ by Proposition \ref{y-diagonals}}
\\[1ex]
&=\frac{f_{n-1}\left(b_ng_{n-2}-\left(\frac{f_{n-2}g_{n-2}}{f_{n-1}+1}\right)\right)}{g_{n-2}+\frac{f_{n-2}g_{n-2}}{f_{n-1}+1}}
&&
\parbox[t]{7cm}{\raggedright
by the $\mathbf{Y}$-diamond rule}
\\[1ex]
&=\frac{f_{n-1}(b_n(f_{n-1}+1)-f_{n-2})}{f_{n-1}+f_{n-2}+1}
&&
\\
&= b_nf_{n-1}-\frac{(b_{n}+1)f_{n-1}f_{n-2}}{f_{n-1}+f_{n-2}+1}
&&
\\
&= b_nf_{n-1}-\frac{(b_{n}+1)\left(\frac{(b_{n-1}f_{n-2}-(f_{n-3}+1))(f_{n-2}+1)}{(f_{n-2}+f_{n-3}+1)}\right)f_{n-2}}{\left(\frac{(b_{n-1}+1)f_{n-2}(f_{n-2}+1)}{f_{n-2}+f_{n-3}+1}\right)}
&&
\hspace{3mm} \parbox[t]{7cm}{\raggedright
by Corollary \ref{reduction}}
\\[1ex]
&=b_nf_{n-1}-\frac{(b_{n-1}f_{n-2}-(f_{n-3}+1))(b_{n}+1)}{b_{n-1}+1}
&&
\end{alignat*}

This completes the proof.
\end{proof}

\begin{proposition}\label{divisibility}
    Consider a $\mathbf{Y}$-frieze pattern of width $w$ with integer entries. Then for all $k \in \mathbb{Z}$ satisfying $2 \leq 2k \leq w+2$ and $i\in\mathbb{Z}$ we have $\frac{(b_{i}+1)(b_{i+2}+1)\cdots (b_{i+2k}+1)}{(b_{i+1}+1)(b_{i+3}+1)\cdots (b_{i+2k-1}+1)}\in\mathbb{Z}$.
\end{proposition}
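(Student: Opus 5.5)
The plan is to find a closed-form expression for the alternating product in terms of entries of the $\mathbf{Y}$-frieze, and then read off integrality from condition (2) of the definition. By translation invariance of the $\mathbf{Y}$-diamond rule, it suffices to take $i=1$. Then the diagonal $f_n=b_{1,n}$ starting at $b_1$ is available, and Proposition \ref{y-diagonals}, Corollary \ref{reduction} and Proposition \ref{y-diagonals3} apply verbatim. Write
\[
R_k:=\frac{(b_{1}+1)(b_{3}+1)\cdots (b_{2k+1}+1)}{(b_{2}+1)(b_{4}+1)\cdots (b_{2k}+1)} .
\]
The heuristic guiding the search for a closed form is the case $B=p_w(A)$. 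There $b_n+1=a_na_{n+1}$, so $R_k=a_1a_{2k+2}$. Hence $R_k$ should be a product of ``two boundary quantities'' that is visible inside $B$ itself.

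First, I would use Corollary \ref{reduction} to rewrite each factor. Set $h_n:=f_n+f_{n-1}+1$. Then for $n\ge 2$,
\[
b_n+1=\frac{h_nh_{n-1}}{f_{n-1}(f_{n-1}+1)},
\]
and for $n=1$ directly $b_1+1=f_1+1=h_1$. Substituting into $R_k$, the $h$'s telescope and give
\[
R_k=h_{2k+1}\cdot\frac{\prod_{j\ \mathrm{odd},\,1\le j\le 2k-1} f_j(f_j+1)}{\prod_{j\ \mathrm{even},\,2\le j\le 2k} f_j(f_j+1)} .
\]
This is an exact rational identity valid for $2k+1\le w+3$. The restriction $2k\le w+2$ in the statement is exactly what keeps all the $f_j$ used here inside the frieze, and the $f_j$ with $1\le j\le w$ are positive by condition (2). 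The remaining task is to show this expression is an integer.

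Second, I would prove integrality by induction on $k$, using Proposition \ref{y-diagonals3} as the divisibility engine. That proposition says that for every $n$ the quantity
\[
E_n:=\frac{(b_{n-1}f_{n-2}-(f_{n-3}+1))(b_n+1)}{b_{n-1}+1}=b_nf_{n-1}-f_n-1
\]
is an integer. Together with Proposition \ref{y-diagonals} this identifies $b_{n-1}f_{n-2}-(f_{n-3}+1)$ as $f_{n-1}h_{n-2}/(f_{n-2}+1)$. The base case $k=1$ should then come from combining $E_3\in\mathbb{Z}$ (using $b_2f_1-1=f_2$) with the identity $(b_1+1)(b_2+1)=f_2+b_1+b_2+2$. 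For the inductive step, I would aim to show $R_k=R_{k-1}\cdot\frac{b_{2k+1}+1}{b_{2k}+1}$ by expressing the new factor through $E_{2k+1}$ and the previously established closed form of $R_{k-1}$. In the spirit of the heuristic $R_k=a_1a_{2k+2}$, I would strengthen the induction hypothesis to the statement that $R_k$ equals an explicit integer combination of $f_{2k}$, $f_{2k+1}$ and $b_{2k+1}$. The aim is a recursion of the form $R_k=(\text{integer})\cdot(\ldots)-R_{k-1}(\ldots)$, mimicking how $a_{2k+2}$ is determined in an ordinary frieze by the linear recurrence $a_{1,n+1}=a_{n+1}a_{1,n}-a_{1,n-1}$.

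The main obstacle is this second step. The raw telescoped formula is a ratio, and integrality does not follow termwise. One has to find the right integer-valued invariant, which is the $\mathbf{Y}$-analogue of ``$a_1a_{2k+2}$'', whose recursion involves only the integer quantities $E_n$ and frieze entries. I would first compute $R_1$ and $R_2$ explicitly in terms of $f_1,\dots,f_5$ to guess this invariant. If no clean recursion appears, the fallback is a $p$-adic argument: show $v_p(R_k)\ge 0$ for each prime $p$ by tracking which of the consecutive factors $f_j(f_j+1)$ can share $p$, using that consecutive $f_j$ are linked through the $\mathbf{Y}$-diamond rule $f_jg_j=(1+f_{j+1})(1+g_{j-1})$.
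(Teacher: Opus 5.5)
Your telescoped closed form for $R_k$ is correct (it reproduces $R_k=a_1a_{2k+2}$ when $B=p_w(A)$), and your base case goes through: from the definition of $E_3$ one checks that $R_1=b_1(b_3+1)-E_3\in\mathbb{Z}$. There is, however, a genuine gap. The inductive step, which is the whole content of the proposition, is never supplied; you only describe a search for an invariant and a $p$-adic fallback.

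The route you set up also cannot deliver that step as it stands. The closed form writes $R_k$ as $h_{2k+1}$ times a ratio of the quantities $f_j(f_j+1)$, and that ratio is not integral in general. The step $R_k=R_{k-1}\cdot\frac{b_{2k+1}+1}{b_{2k}+1}$ transmits no integrality, because the new factor is not an integer. Separately, contrary to your claim, the closed form is not valid on the whole range $2k\le w+2$. For $2k\in\{w+1,w+2\}$ it involves $f_{w+1}(f_{w+1}+1)$ and $f_{w+2}(f_{w+2}+1)$, and both vanish because $f_{w+1}=0$ and $f_{w+2}=-1$.

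The missing idea is to use Proposition~\ref{y-diagonals3} additively, keeping the $f$'s out of denominators. You identified $b_{n-1}f_{n-2}-(f_{n-3}+1)$ with $f_{n-1}h_{n-2}/(f_{n-2}+1)$. Instead, write it as $(b_{n-1}+1)f_{n-2}-h_{n-2}$. Then the statement $E_n\in\mathbb{Z}$ becomes the recursion
\[
h_n=(b_n+1)(f_{n-1}-f_{n-2})+\frac{b_n+1}{b_{n-1}+1}\,h_{n-2}.
\]
Iterating from $n=2k+1$ down to $h_1=b_1+1$ gives
\[
h_{2k+1}=R_k+\sum_{j=1}^{k}(f_{2j}-f_{2j-1})\,P_j,\qquad P_j:=\frac{(b_{2j+1}+1)(b_{2j+3}+1)\cdots(b_{2k+1}+1)}{(b_{2j+2}+1)(b_{2j+4}+1)\cdots(b_{2k}+1)},
\]
with $P_k=b_{2k+1}+1$. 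Each $P_j$ is an alternating product of the same shape, starting at $b_{2j+1}$, of length $2(k-j)+1<2k+1$. So each $P_j$ is an integer by induction on $k$, provided the inductive hypothesis is stated for all starting indices $i$ and not only for $R_{k-1}$ at $i=1$. Since $h_{2k+1}\in\mathbb{Z}$, it follows that $R_k\in\mathbb{Z}$.

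This is the paper's proof. You had the right engine in $E_n\in\mathbb{Z}$, but the inductive hypothesis must be applied to the shorter products ending at $b_{2k+1}$, not to $R_{k-1}$.
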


\begin{proof}
    Without loss of generality we shall assume $i=1$. 

    Note that for any $k$ with $2\leq 2k\leq w+2$, by Proposition \ref{y-diagonals3} we have: \begin{align}
        f_{2k+1}+f_{2k}+1&=(b_{2k+1}+1)f_{2k}-\frac{(b_{2k}f_{2k-1}-(f_{2k-2}+1))(b_{2k+1}+1)}{b_{2k}+1} \nonumber \\
        &=(b_{2k+1}+1)(f_{2k}-f_{2k-1})+\frac{(f_{2k-1}+f_{2k-2}+1)(b_{2k+1}+1)}{b_{2k}+1}. \label{divisibility-induction}
    \end{align}

    For $k=1$, since $f_0 = 0$ and $f_1 = b_1$, we see equality (\ref{divisibility-induction}) yields: $$f_{3}+f_{2}+1 = (b_{3}+1)(f_{2}-f_{1})+\frac{(b_1+1)(b_{3}+1)}{b_{2}+1}.$$ Therefore, $\frac{(b_1+1)(b_{3}+1)}{b_{2}+1} \in \mathbb{Z}$. Now, we proceed by induction and assume the result holds for all smaller $k$. By iterative substitution of equation (\ref{divisibility-induction}) we obtain:

     \begin{align*}
         f_{2k+1}+f_{2k}+1 &=(b_{2k+1}+1)(f_{2k}-f_{2k-1})+\frac{(b_{2k-1}+1)(b_{2k+1}+1)}{b_{2k}+1}(f_{2k-2}-f_{2k-3}) \hspace{1mm} + \ldots \\ & \cdots+ \frac{(b_3+1)(b_5+1)\cdots(b_{2k+1}+1)}{(b_4+1)(b_6+1)\cdots(b_{2k}+1)}(f_{2}-f_{1}) + \frac{(b_1+1)(b_3+1)\cdots(b_{2k+1}+1)}{(b_2+1)(b_4+1)\cdots(b_{2k}+1)}.
     \end{align*}

    Observe that, by our induction hypothesis, $\frac{(b_{2k-1}+1)(b_{2k+1}+1)}{b_{2k}+1},\cdots,\frac{(b_3+1)(b_5+1)\cdots(b_{2k+1}+1)}{(b_4+1)(b_6+1)\cdots(b_{2k}+1)}$ are all integers. Consequently, $\frac{(b_1+1)(b_3+1)\cdots(b_{2k+1}+1)}{(b_2+1)(b_4+1)\cdots(b_{2k}+1)}\in \mathbb{Z}$ and this completes the proof.
\end{proof}

The following proposition is similar to the above, but for entries along diagonals and involves just three entries:

\begin{proposition}\label{diagonal-divisibility}
    Consider a $\mathbf{Y}$-frieze pattern of width $w$ with integer entries. Then for all $n\in\mathbb{Z}$ satisfying $1\leq n\leq w$ we have $\frac{f_{n-1}f_{n+1}}{f_{n}+1}\in\mathbb{Z}$.
\end{proposition}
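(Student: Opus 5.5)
The plan is to get the statement from the three-term recursion along the diagonal in Proposition \ref{y-diagonals}. Equation (\ref{y-diagonals1}) ties together exactly $f_{n-1}$, $f_n$ and $f_{n+1}$. After rearranging, the product $f_{n-1}f_{n+1}$ should appear as $(f_n+1)$ times an explicit integer combination of the entries. No induction is needed beyond what Proposition \ref{y-diagonals} already gives.

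Concretely, I will apply equation (\ref{y-diagonals1}) with $n+1$ in place of $n$. This is allowed because $1\leq n\leq w$ gives $0\leq n+1\leq w+3$. It reads
\[
f_{n+1}(f_n+f_{n-1}+1)=(b_{n+1}f_n-f_{n-1}-1)(f_n+1).
\]
Splitting the left-hand side as $f_{n+1}(f_n+1)+f_{n+1}f_{n-1}$ and moving the first term to the right, I obtain
\[
f_{n-1}f_{n+1}=(f_n+1)\bigl(b_{n+1}f_n-f_{n-1}-1-f_{n+1}\bigr).
\]

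Next, for $1\leq n\leq w$ the entry $f_n=b_{1,n}$ lies in a nontrivial row, so it is a positive integer. Hence $f_n+1\neq 0$ and we may divide:
\[
\frac{f_{n-1}f_{n+1}}{f_n+1}=b_{n+1}f_n-f_{n-1}-f_{n+1}-1.
\]
The right-hand side is an integer, since every entry of the pattern is an integer (including the conventional values $f_0=0$ and the trivial-row values at the ends). This proves the claim.

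The only step needing any care is the bookkeeping at the ends of the range. At $n=1$ the computation uses $f_0=0$. At $n=w$ it uses $f_{w+1}$, a trivial-row entry. One must check that equation (\ref{y-diagonals1}) is really valid at the index $n+1$ in both cases. Proposition \ref{y-diagonals} states it for all $0\leq n\leq w+3$, with the boundary cases handled explicitly in its proof, so I expect no genuine obstacle.
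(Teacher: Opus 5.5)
Your proposal is correct and follows the paper's own argument. The paper likewise applies Proposition \ref{y-diagonals} to $f_{n+1}$ and notes that $\frac{f_{n-1}f_{n+1}}{f_n+1}+f_{n+1}=\frac{(f_n+f_{n-1}+1)f_{n+1}}{f_n+1}=b_{n+1}f_n-(f_{n-1}+1)\in\mathbb{Z}$, which is exactly your rearranged identity; your explicit checks that $f_n+1\neq 0$ and of the endpoint values $f_0=0$, $f_{w+1}=0$ are a small but welcome addition.
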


\begin{proof}
    Note that $\frac{f_{n-1}f_{n+1}}{f_{n}+1}+f_{n+1}=\frac{(f_{n}+f_{n-1}+1)f_{n+1}}{f_{n}+1}=b_{n+1}f_n-(f_{n-1}+1) \in \mathbb{Z}$, where the last equality follows from applying Proposition \ref{y-diagonals} on $f_{n+1}$.
\end{proof}

The following lemma shows that entries along a diagonal of a $\mathbf{Y}$-frieze pattern remains positive if entries on the first row are not "small enough":

\begin{lemma}\label{growth}
    Consider a closed $\mathbf{Y}$-frieze pattern with positive integer entries such that all entries in the first row are $>1$. Suppose $f_{n-1}\geq f_{n-2}+1$.
    \begin{enumerate}[label=\normalfont(\arabic*)]
        \item If $b_{n}\geq 3$, then $f_{n}\geq f_{n-1}+1$.
        \item If $b_{n}\geq5$, $b_{n+1}=2$, then $f_n\geq 2(f_{n-1}+1)$ and $f_{n+1}\geq f_n+1$.
        \item If $b_{n}=2$, $b_{n+1}\geq5$, then $f_n\geq \frac{f_{n-1}+1}{2}$ and $f_{n+1}\geq f_n+1$.
        \item If $b_{n}=2$, $b_{n+1}=8$, $b_{n+2}=2$, then $f_n\geq \frac{f_{n-1}+1}{2}$, $f_{n+1}\geq 2(f_n+1)$, and $f_{n+2}\geq f_{n+1}+1$.
    \end{enumerate}
\end{lemma}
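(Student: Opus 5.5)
The plan is to work directly with the recursion of Proposition \ref{y-diagonals},
\[
f_n=\frac{(b_nf_{n-1}-(f_{n-2}+1))(f_{n-1}+1)}{f_{n-1}+f_{n-2}+1},
\]
and to treat it as an increasing function of $b_n$ and of $f_{n-1}$, and a decreasing function of $f_{n-2}$. The hypothesis $f_{n-1}\ge f_{n-2}+1$ means $f_{n-2}\le f_{n-1}-1$. So in each case I will first replace $f_{n-2}$ by its largest permitted value and $b_n$ by its smallest permitted value. This reduces everything to a one-variable inequality in $x:=f_{n-1}\ge 1$ (positivity is given). To justify that these substitutions go the right way, I will write $s=f_{n-2}+1\in(0,x]$ and check monotonicity of
\[
\frac{(b x - s)(x+1)}{x+s}
\]
in $s$. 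The numerator decreases and the denominator increases in $s$, provided $bx-s>0$, which holds since $b\ge 2$ and $s\le x$. So the expression decreases in $s$, and the worst case is $s=x$, i.e. $f_{n-2}=f_{n-1}-1$.

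\textbf{Case (1).} With $s=x$ the bound becomes $f_n\ge \frac{(b-1)(x+1)}{2}$. For $b\ge 3$ this is at least $x+1$, which is exactly the claim $f_n\ge f_{n-1}+1$.

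\textbf{Case (2).} For $b_n\ge5$, the same bound gives $f_n\ge 2(x+1)$. Next I apply the recursion at index $n+1$ with $b_{n+1}=2$, now with the new pair $(f_n,f_{n-1})$. Setting $y=f_n$ and $s'=f_{n-1}+1$, we have $s'\le y/2$, which is stronger than the hypothesis used before. Plugging into
\[
\frac{(2y-s')(y+1)}{y+s'}
\]
at the worst case $s'=y/2$ gives $\frac{(3y/2)(y+1)}{3y/2}=y+1$. Hence $f_{n+1}\ge f_n+1$.

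\textbf{Case (3).} With $b_n=2$ and $s\le x$, the bound is $f_n\ge \frac{x(x+1)}{2x}=\frac{x+1}{2}$. Then at index $n+1$, with $y=f_n$, $s'=x+1\le 2y$ and $b\ge5$, the worst case gives
\[
\frac{(5y-2y)(y+1)}{3y}=y+1.
\]

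\textbf{Case (4).} This combines the previous cases. First $f_n\ge\frac{x+1}{2}$ as in (3). Then with $s'\le 2y$ and $b=8$ we get $\frac{6y(y+1)}{3y}=2(y+1)$. Finally, the step $b=2$ with ratio $1/2$ is exactly the computation in (2).

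The only point needing care is the monotonicity claim, because the hypotheses at later indices are of ratio type ($s'\le cy$) rather than $s'\le y$. Decrease in $s'$ still holds as long as $by-s'>0$, i.e. $c<b$: we have $c=2<5$ in (3), $2<8$ in (4), and $1/2<2$ in (2). Monotonicity in $b$ is immediate since $x\ge 1$. I expect no real obstacle beyond bookkeeping; the integrality of entries is not needed, only positivity of the $f_i$.
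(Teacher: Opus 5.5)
Your proposal is correct and follows the paper's route. In each case the paper substitutes the worst-case bound $f_{n-2}+1\le f_{n-1}$ into the recursion of Proposition \ref{y-diagonals}. At the subsequent steps it uses the ratio bound from the previous step instead ($f_{n-1}+1\le f_n/2$ or $f_{n-1}+1\le 2f_n$), and it obtains exactly your one-variable estimates. Your explicit check that the recursion is decreasing in $f_{n-2}$ and increasing in $b_n$ makes precise a monotonicity step the paper leaves implicit.
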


\begin{proof}
    Suppose $b_n\geq3$, then we have\\ \[f_n=\frac{(b_nf_{n-1}-(f_{n-2}+1))(f_{n-1}+1)}{f_{n-1}+f_{n-2}+1}\geq\frac{(b_nf_{n-1}-f_{n-1})(f_{n-1}+1)}{f_{n-1}+1+f_{n-1}-1}=\frac{(b_n-1)(f_{n-1}+1)}{2}\geq f_{n-1}+1\]

    Suppose $b_{n}\geq5$, $b_{n+1}=2$, then we have \begin{align*}f_n&=\frac{(b_nf_{n-1}-(f_{n-2}+1))(f_{n-1}+1)}{f_{n-1}+f_{n-2}+1}\geq\frac{(b_nf_{n-1}-f_{n-1})(f_{n-1}+1)}{f_{n-1}+f_{n-1}}=\frac{(b_n-1)(f_{n-1}+1)}{2}\geq2(f_{n-1}+1)\\
    f_{n+1}&=\frac{(2f_{n}-(f_{n-1}+1))(f_{n}+1)}{f_{n}+f_{n-1}+1}\geq\frac{\left(2f_n-\frac{f_n}{2}\right)(f_n+1)}{f_n+\frac{f_n}{2}}=f_n+1\end{align*}

Suppose $b_{n}=2$, $b_{n+1}\geq5$, then we have \begin{align*}f_n&=\frac{(2f_{n-1}-(f_{n-2}+1))(f_{n-1}+1)}{f_{n-1}+f_{n-2}+1}\geq\frac{(2f_{n-1}-f_{n-1})(f_{n-1}+1)}{f_{n-1}+f_{n-1}}=\frac{(f_{n-1}+1)}{2}\\f_{n+1}&=\frac{(b_{n+1}f_n-(f_{n-1}+1))(f_n+1)}{f_n+f_{n-1}+1}\geq\frac{(b_{n+1}-2)f_n(f_n+1)}{f_n+2f_n}=\frac{(b_{n+1}-2)(f_n+1)}{3}\geq f_n+1\end{align*}

Suppose $b_{n}=2$, $b_{n+1}=8$, $b_{n+2}=2$, then we have \begin{align*}f_n&=\frac{(2f_{n-1}-(f_{n-2}+1))(f_{n-1}+1)}{f_{n-1}+f_{n-2}+1}\geq\frac{(2f_{n-1}-f_{n-1})(f_{n-1}+1)}{f_{n-1}+f_{n-1}}=\frac{(f_{n-1}+1)}{2}\\
f_{n+1}&=\frac{(b_{n+1}f_n-(f_{n-1}+1))(f_n+1)}{f_n+f_{n-1}+1}\geq\frac{(b_{n+1}-2)f_n(f_n+1)}{f_n+2f_n}=\frac{(b_{n+1}-2)(f_n+1)}{3}= 2(f_n+1)\\
f_{n+2}&=\frac{(b_{n+2}f_{n+1}-(f_{n}+1))(f_{n+1}+1)}{f_{n+1}+f_{n}+1}\geq\frac{\left(2-\frac{1}{2}\right)f_{n+1}(f_{n+1}+1)}{f_{n+1}+\frac{f_{n+1}}{2}}= f_{n+1}+1\end{align*}

\end{proof}

Using this lemma, we can prove the following proposition:

\begin{proposition}\label{can cut}
    Consider a closed $\mathbf{Y}$-frieze pattern $B$ of width $w\geq1$ with positive integer entries. Then the first row of $B$ must contain one of the following:

    \begin{enumerate}[label=\normalfont(\arabic*)]
        \item The entry $1$.
        \item A subsequence of the form $(a,2,2,b)$ such that one of $a,b$ is not congruent to $2$ modulo $3$.
        \item The subsequence $(2,2,2)$.
    \end{enumerate}
\end{proposition}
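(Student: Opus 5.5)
Assume, for contradiction, that the first row of $B$ contains no $1$, no $(2,2,2)$, and that every subsequence $(a,2,2,b)$ has $a\equiv b\equiv 2 \pmod 3$. Then all $b_i\ge 2$. We aim to show that some diagonal $(f_n)$ never returns to $0$, contradicting $f_{w+1}=0$ (the bottom row of zeros). The engine is Lemma \ref{growth}: we follow a diagonal starting at a suitably chosen index and keep the invariant $f_{n-1}\ge f_{n-2}+1$. Lemma \ref{growth} lets us advance this invariant across blocks of the first row of the forms $(\ge 3)$, $(\ge5,2)$, $(2,\ge5)$ and $(2,8,2)$. Since the invariant forces $f_n\ge f_0+n=n>0$ at every step, reaching the end of the diagonal gives the contradiction.

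The first task is to use the local constraints to show the first row decomposes into these blocks. Proposition \ref{divisibility} with $k=1$ gives $(b_{i+1}+1)\mid (b_i+1)(b_{i+2}+1)$. Combined with the hypotheses, this should restrict the neighbourhoods of $2$'s.

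\begin{enumerate}[label=\normalfont(\roman*)]
\item An isolated $2$ flanked by $x,y$ needs $3\mid(x+1)(y+1)$, so $x\equiv2$ or $y\equiv2\pmod 3$. Neighbours $\equiv 2 \pmod 3$ that are at least $3$ are at least $5$. So every isolated $2$ sits next to some entry $\ge5$, and can be paired into a $(\ge5,2)$ or $(2,\ge5)$ block.
\item A pair $(2,2)$ (there are no triples) has neighbours $a,b\equiv2\pmod 3$, hence $a,b\ge5$. Applying divisibility to $(a,2,2)$ and $(2,2,b)$ is automatic, so we use $k=2$ on $(x,a,2,2,b)$-type windows, or simply split $(a,2,2,b)$ as $(a,2)(2,b)$, i.e.\ a $(\ge5,2)$ block followed by a $(2,\ge5)$ block.
\end{enumerate}

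The danger is an entry $c\ge5$ that must serve two $2$'s, as in $(2,c,2)$. This is where the $(2,8,2)$ block should enter. I expect to show that in $(2,c,2)$ with both $2$'s otherwise unserved, divisibility (Proposition \ref{divisibility} for $k=1,2$ around the window) forces $c+1$ to be divisible by $9$, so $c\ge8$. Then (4) applies, with $c=8$ the extremal case, and larger $c$ handled by an identical estimate since the bound only improves.

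Next we choose the starting point. By cyclic shift, start the diagonal just before a block, with $f_{-1}=-1$ and $f_0=0$, so the invariant $f_0\ge f_{-1}+1$ holds. Then the blocks tile $b_1,\dots,b_{w+3}$ greedily, giving $f_{w+1}\ge w+1>0$, a contradiction. The case $w\ge1$ ensures the row has length at least $4$, so the windows used exist.

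The main obstacle is the combinatorial covering step: proving rigorously that, under the negated hypotheses, the cyclic row can be partitioned into blocks $(\ge3)$, $(\ge5,2)$, $(2,\ge5)$ and $(2,c,2)$ with $c\ge8$, and that such a $c$ is indeed forced to be at least $8$. I would first attempt a greedy left-to-right assignment, giving each $2$ to an adjacent large entry. I would isolate exactly the conflict configuration $(2,c,2)$ and extract $9\mid c+1$ from the $k=2$ divisibility quotient $\frac{(x+1)(c+1)(z+1)}{3\cdot 3}$ together with the neighbours' congruences. I would also check that (4) extends from $c=8$ to $c\ge8$ by the same inequalities, since $(c-2)/3\ge2$.
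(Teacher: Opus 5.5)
Your overall route is the paper's: negate the conclusion, use Proposition \ref{divisibility} with $k=1$ to put an entry $\ge 5$ next to every $2$, tile the row by the blocks of Lemma \ref{growth}, and contradict $f_{w+1}=0$. Assuming all three negations at once is a tidy way to merge the paper's two stages (first forcing a $(2,2)$, then forcing a flank $<5$).

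\textbf{The $(2,c,2)$ step is set up backwards.} The step you single out would not work as planned. For a window $(x,2,c,2,z)$, the $k=2$ quotient $\frac{(x+1)(c+1)(z+1)}{9}$ does not force $9\mid c+1$ once $x+1$ or $z+1$ carries a factor of $3$. For example, if $x$ has been assigned to a $2$ on its other side, then $x=8$ and the quotient is an integer for every $c$. What Proposition \ref{divisibility} actually gives is the $k=1$ instance centred at $c$, which you already wrote down: $(c+1)\mid(2+1)(2+1)=9$. Hence $c\in\{2,8\}$. Since $c=2$ would create a $(2,2,2)$, $c=8$ exactly; this is how the paper argues.

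With that in hand, Lemma \ref{growth}(4) applies verbatim and no extension to $c>8$ is needed. The covering step is also no longer an obstacle. Let each $2$ pick any neighbour $\ge 5$: a flank of its $(2,2)$-run, or, if it is isolated, its neighbour $\equiv 2 \pmod 3$. An entry can be picked by two $2$'s only if it sits in a window $(2,c,2)$. Then it is an $8$ forming a $(2,8,2)$ block, and all unpicked entries form $(\ge 3)$ blocks.

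\textbf{Two smaller repairs.}
\begin{enumerate}
\item Starting from $f_{-1}=-1$, $f_0=0$, you cannot invoke Lemma \ref{growth} for the first block. Its proof uses (\ref{y-diagonals2}), whose denominator $f_0+f_{-1}+1$ vanishes at $n=1$. Instead, verify the first block directly from $f_1=b_1$ and $f_2=b_1b_2-1$ (and $f_3=24$ when the block is $(2,8,2)$). This succeeds for all four block types.
\item The invariant does not give $f_n\ge n$ inside a $(2,\ge 5)$ or $(2,8,2)$ block; there you only have $f_n\ge (f_{n-1}+1)/2$. Positivity of each $f_n$ up to $n=w+1$ is all the contradiction needs, and that still holds.
\end{enumerate}
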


\begin{proof}
    Suppose the first row of $B$ does not contain the entry $1$, then by the first statement of Lemma \ref{growth}, the first row of $B$ must contain at least one $2$, for otherwise $f_n$ would be positive for all $n$.

    We shall show that the first row of $B$ contains the subsequence $(2,2)$ by contradiction. Assume the contrary is true. For all integers $i$ such that $b_i=2$, $\frac{(b_{i-1}+1)(b_{i+1}+1)}{b_i+1}=\frac{(b_{i-1}+1)(b_{i+1}+1)}{3}$ is an integer by Proposition \ref{divisibility}. Therefore either $b_{i-1}+1$ or $b_{i+1}+1$ is a multiple of $3$. By assumption, $b_{i-1}$ and $b_{i+1}$ cannot be $2$, therefore one of them must be $5$.
    
    In the case where two $2$s are separated by one entry (i.e. $b_{i-1}=b_{i+1}=2$ for some $i$), then $\frac{(b_{i-1}+1)(b_{i+1}+1)}{b_i+1}=\frac{9}{b_i+1}$ is an integer by Proposition \ref{divisibility}. Since $b_i\neq0$ is given and $b_i$ cannot be $2$ by assumption, $b_i=8$. In other words, any entry between two $2$s must be $8$.

    Since $b_1,b_2>1$, $f_2=b_1b_2-1\geq2f_1-1\geq f_1+1$. Now we can repeatedly apply the statements 2,3, and 4 of Lemma \ref{growth} by the argument above to show that $f_i$ is positive for $i\in\mathbb{Z}^+$ by grouping isolated $2$s to adjacent entry that is at least $5$, and grouping two $2$s are separated by one entry to the entry in between (which is $8$). Since $B$ is closed, our assumption that the first row of $B$ does not contain the subsequence $(2,2)$ is false.

    Now consider all subsequence of the first row of $B$ of the form $a,2,2,b$. We shall show that in one of these subsequences either $a$ or $b$ is less than $5$ by contradiction. Assume the contrary, that is, $a,b\geq 5$ for all such subsequences. We can apply the same grouping argument from above to show that $f_i$ is positive for $i\in\mathbb{Z}^+$. For instance, if there is an entry $2$ before $a$ but not after $b$, we can group $(2,a,2,2,b)$ into $(2,a,2)$ and $(2,b)$, since $a=8$ by the argument above, we can apply the fourth and third statement in Lemma \ref{growth}. Since $B$ is closed, either $a$ or $b$ is less than $5$.

    Without loss of generality suppose $a<5$, then $a=2,3,$ or $4$. If $a=2$ then the first row of $B$ contains the subsequence $(2,2,2)$. If $a=3$ or $4$ then the first row of $B$ contains a subsequence of the form $(a,2,2,b)$ such that one of $a,b$ is not congruent to $2$ modulo $3$.
\end{proof}

\begin{proposition}\label{Y-2-cut}
    Let $B = (b_{i,j})$ be a $\mathbf{Y}$-frieze pattern of width $w\geq 4$.
    
    If the first row of $B$ contains the entry $1$, then by performing a $\mathbf{Y}$-$2$-cut on the first row, we obtain the quiddity sequence of a $\mathbf{Y}$-frieze pattern $B' = (b'_{i,j})$ of width $w-2$.
\end{proposition}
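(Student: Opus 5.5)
We build $B'$ inside $B$ itself. Concretely, we look for a row of $B$ whose entries, suitably reindexed and corrected near the cut, satisfy the $\mathbf{Y}$-diamond rule with the boundary conditions of width $w-2$. This mirrors the triangulated-polygon picture, where removing a quadrilateral at the diagonal from $j-1$ to $j+2$ leaves a polygon with two fewer vertices.

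Without loss of generality take $j=3$, so $b_3=1$. The first step is to extract arithmetic constraints near the $1$. Proposition \ref{divisibility} with $k=1$ centred at $b_3$ gives that $\frac{(b_2+1)(b_4+1)}{2}$ is an integer. Applied at $b_2$ and $b_4$, it gives that $\frac{(b_1+1)\cdot 2}{b_2+1}$ and $\frac{2(b_5+1)}{b_4+1}$ are integers. These integralities make the three replacement entries of Definition \ref{Y-n-cut}(1) integers.

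Next, the $\mathbf{Y}$-diamond rule applied to the small diamonds around $b_3=1$ gives the second-row entries $b_{2,3}=2b_2/(b_4\text{-side})$ and related quantities in closed form. I expect to show that $b_2,b_4\ge 2$, since a $1$ adjacent to a $1$ would force a zero entry for $w\ge 4$. From this, the replacement entries are positive.

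The core of the argument is to exhibit the whole frieze $B'$. I would define $B'$ by taking $b'_{i,k}=b_{i,k}$ for diamonds not straddling the cut region, with a glued definition for the rest. Proving that every diamond holds this way is messy, so instead I would use the recursion of Proposition \ref{y-diagonals}. A sequence $(b'_i)$ defines a $\mathbf{Y}$-frieze of width $w-2$ exactly when, for every starting point $s$, the diagonal recursion (\ref{y-diagonals2}) produces positive integers up to length $w-2$ and then produces $f'_{w-1}=0$ and $f'_{w}=-1$.

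For a diagonal starting at $b'_s$ that does not pass through the modified window, I would compare with the diagonal of $B$ from the same start. The claim is that after crossing the window, the pair $(f'_{n-1},f'_{n-2})$ for $B'$ equals the pair $(f_{n+1},f_n)$ for $B$ with the index shifted by two. This is checked by composing the recursion of Proposition \ref{y-diagonals3} through the five entries $b_1,\dots,b_5$ with $b_3=1$, and comparing it with the composition through the three new entries. It is a finite rational-function identity in $f_{n-1},f_{n-2},b_1,b_2,b_4,b_5$.

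The identity is most easily verified by the substitution $b_i+1=x_ix_{i+1}$, which is valid rationally. In that substitution it becomes precisely the $2$-cut computation already done for frieze patterns, so the commutative diagram in the preceding remark can be reused. Once shifted, the termination values $0$ and $-1$ and positivity come for free from $B$.

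Diagonals that start inside the window (at $b'_1,b'_2,b'_3$) are handled by symmetry. We may reflect the frieze, using the glide symmetry from the corollary to Proposition \ref{y-diagonals}, or restart from a different anchor point so that the window lies strictly inside the diagonal.

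I expect the main obstacle to be the rational-function identity across the window, especially ensuring that no denominator vanishes. I would handle it by passing to the rational substitution $b_i+1=x_ix_{i+1}$, where the frieze recursion is the classical $SL_2$ one. The hypothesis $w\ge 4$ guarantees the window of five entries fits with room for the diagonals on both sides.
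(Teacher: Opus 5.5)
Your route differs from the paper's. You do not write $B'$ down explicitly; instead you push each diagonal of $B'$ through the window, compare it with a diagonal of $B$, and check the transfer identity via $b_i+1=x_ix_{i+1}$. For the diamond rule, and for closing values on diagonals that cross the whole window, this can work. It needs more than the remark you cite, which only checks the first row and only for genuine friezes. You would need the product formula $b_{i,j}=a_{i,j+1}a_{i+1,j}$ for the rational array generated by $(x_i)$, normalised so that $x_3=1$ and $x_4=2$. You would also need the identity $M(x_2)M(1)M(2)M(x_5)=M(x_2-2)M(x_5-1)$, where $M(x)$ has rows $(x,-1)$ and $(1,0)$.

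The genuine gap is the entries of $B'$ that are not entries of $B$.
\begin{itemize}
\item On a diagonal starting at $b'_s$ before the window, your comparison gives $b'_{s,k}=b_{s,k}$ for $k\le 0$ and $b'_{s,k}=b_{s,k+2}$ for $k\ge 2$. The entry $b'_{s,1}$, produced when the recursion reads $b'_1$, is new.
\item The diagonal starting at $b'_3$ consists entirely of new entries; it is the mirror image of this anti-diagonal. Reversing the sequence is a legitimate way to reach it. The glide symmetry of the corollary is not yet available for $B'$.
\item Your claim that positivity ``comes for free from $B$'' is false for these entries, and you do not address their integrality at all.
\item Integrality is not automatic. $\mathbf{Y}$-frieze entries are rational, not polynomial, functions of the first row, e.g. $b_{1,3}=\frac{b_{1,2}b_{2,3}}{b_2+1}-1$, and your $x_i$ are merely rational. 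So integrality of $b'_1,b'_2,b'_3$ does not propagate.
\item For the diagonal starting at $b'_4$, the window is reached only at the last step. Its closing value $f'_{w-1}$ is itself one of these new entries, so the shift argument says nothing about it.
\end{itemize}

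The missing idea is the closed form $b'_{s,1}=\frac{b_{s,1}\,b_{s,3}}{b_{s,2}+1}$. This is the paper's $b'_{i,w-1}=\frac{b_{i,w-1}b_{i,w+1}}{b_{i,w}+1}$ with $b_{w+1}=1$. It makes positivity evident. It gives $0$ for the diagonal just after the window, since $b_{s,3}$ then lies in the zero row. Integrality then follows from Proposition \ref{diagonal-divisibility}.

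Relatedly, your positivity argument for the first-row replacements fails. The conditions $b_2,b_4\ge 2$ together with Proposition \ref{divisibility} allow $(b_1,b_2,b_3)=(2,2,1)$, which gives $b_1-\frac{2(b_1+1)}{b_2+1}=0$. This case is excluded only because it forces $b_{1,3}=0$. The same closed form at $s=1$ captures this exclusion.
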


\begin{proof}
Without loss of generality, we assume $b_{w+1,w+1}=1$.

Due to glide symmetry, to describe $B'$ it suffices to specify the entries $b'_{i,j}$ for $1 \leq i \leq j \leq w-1$, and we do so as follows:
\begin{align}
    b'_{i,j} &:= b_{i,j} \hspace{-20mm}&\text{for}& \ \ 1 \leq i \leq j \leq w-2. \label{Y2cut-inherit}
    \\[1em]  
    b'_{i,w-1} &:= \frac{b_{i,w-1}b_{i,w+1}}{b_{i,w}+1} \hspace{-20mm}& \text{for}& \ \ 1 \leq i \leq w-1. \label{Y2cut-new}
\end{align}

We begin the proof by showing that the quiddity sequence of $B'$ arises from a $\mathbf{Y}$-$2$-cut on the quiddity sequence of $B$. To this end, observe that by equation (\ref{Y2cut-new})  we know $$b'_{w-1, w-1} := \frac{b_{w-1,w-1}b_{w-1,w+1}}{b_{w-1,w}+1}$$ and by equation (\ref{Y2cut-inherit}), equation (\ref{Y2cut-new}) and glide symmetry we have $$b'_{w,w} := b_{1,w-2} \ \  \text{and} \ \ b'_{w+1,w+1} := b'_{2,w-2} = \frac{b_{2,w-1}b_{2,w+1}}{b_{2,w}+1}.$$

Therefore, since all other entries in the quiddity sequence of $B'$ coincide with those of $B$, it remains only to verify the following (final) equality in order to show that $B'$ arises from a $\mathbf{Y}$-$2$-cut of $B$:

\begin{align*}
&\Big(b'_{w-1,w-1}, \ b'_{w,w}, \ b'_{w+1,w+1}\Big) = \left(\frac{b_{w-1,w-1}b_{w-1,w+1}}{b_{w-1,w}+1}, \ b_{1,w-2}, \ \frac{b_{2,w-1}b_{2,w+1}}{b_{2,w}+1}\right) \\
= &\left(b_{w-1,w-1}-\frac{2(b_{w-1,w-1}+1)}{b_{w,w}+1}, \ \frac{(b_{w,w}-1)(b_{w+2,w+2}-1)}{2}-1, \ b_{w+3,w+3}-\frac{2(b_{w+3,w+3}+1)}{b_{w+2,w+2}+1}\right).
\end{align*}

Now, note that by Proposition \ref{y-diagonals} applied to $f_n := b_{w-1,w+1}$ and using $b_{w+1,w+1} = 1$ we have

\begin{align}
    \frac{b_{w-1,w-1}b_{w-1,w+1}}{b_{w-1,w}+1}&=\frac{b_{w-1,w-1}(b_{w+1,w+1}b_{w-1,w}-(b_{w-1,w-1}+1))}{b_{w-1,w}+b_{w-1,w-1}+1}
    =\frac{b_{w-1,w-1}(b_{w-1,w}-(b_{w-1,w-1}+1))}{b_{w-1,w}+b_{w-1,w-1}+1} \nonumber \\[1em] 
    &=b_{w-1,w-1}-\frac{2b_{w-1,w-1}(b_{w-1,w-1}+1)}{b_{w-1,w}+b_{w-1,w-1}+1}
    =b_{w-1,w-1}-\frac{2b_{w-1,w-1}(b_{w-1,w-1}+1)}{b_{w-1,w-1}b_{w,w}+b_{w-1,w-1}} \nonumber \\[1em] 
    &=b_{w-1,w-1}-\frac{2(b_{w-1,w-1}+1)}{b_{w,w}+1} \label{Y-2-cut-holds}
\end{align}

where the penultimate equality follows from the $\mathbf{Y}$-diamond rule $(b_{w-1,w} +1)(0+1) = b_{w-1,w-1}b_{w,w}$.

By symmetry, $\frac{b_{2,w-1}b_{2,w+1}}{b_{2,w}+1}=b_{w+3,w+3}-\frac{2(b_{w+3,w+3}+1)}{b_{w+2,w+2}+1}$.

Finally, observe that $b_{w,w+2} = b_{1,w-2}$ (due to glide symmetry) belongs to the following triangle of entries in $B$:

\[
\begin{matrix}
0 && 0 && 0 && 0\\
& b_{w,w} && 1 && b_{w+2,w+2} &\\
&& b_{w,w+1} && b_{w+1,w+2} &&\\
&&& b_{1,w-2} &&&
\end{matrix}
\]

Therefore, applying the $\mathbf{Y}$-diamond rule to the lower most diamond above and re-arranging we have $$b_{1,w-2} = \frac{(b_{w,w+1})(b_{w+1,w+2})}{2}-1 = \frac{(b_{w,w}-1)(b_{w+2,w+2}-1)}{2}-1$$

where the final equality follows from two further applications of the $\mathbf{Y}$-diamond rule (with $0$ at the tip of each diamond and $b_{w,w+1}$ and $b_{w+1,w+2}$ at the bottom, respectively). This completes the proof that the first row of $B'$ arises from a $\mathbf{Y}$-$2$-cut on the first row of $B$.

We now turn our attention to proving the $\mathbf{Y}$-diamond rule holds everywhere in $B'$. Note that, to prove this, it suffices to show that the rule holds on each diamond that involves the entry $b'_{i,w-1} := \frac{b_{i,w-1}b_{i,w+1}}{b_{i,w}+1}$ for some $i \in \{1,\ldots, w-1\}$, since these are the only entries not in $B$. For each $i \in \{1,\ldots, w-1\}$ there are two such types of diamonds:

\[
\begin{matrix}
    &&b_{i+1,w-2}&&\\
    &b_{i,w-2}&&\frac{b_{i+1,w-1}b_{i+1,w+1}}{b_{i+1,w}+1}\\
    &&\frac{b_{i,w-1}b_{i,w+1}}{b_{i,w}+1}&&
\end{matrix}
\qquad \text{and} \ \qquad
\begin{matrix}
    &&\frac{b_{i+1,w-1}b_{i+1,w+1}}{b_{i+1,w}+1}&&\\
    &\frac{b_{i,w-1}b_{i,w+1}}{b_{i,w}+1}&&b_{1,i-1}\\
    &&b_{1,i-2}
\end{matrix}
\]



In the interest of showing the $\mathbf{Y}$-diamond rule holds on the left diamond above, first observe that the entries in $B'$ satisfy equation (\ref{y-diagonals2}) for $f_n := \frac{b_{i,w-1}b_{i,w+1}}{b_{i,w}+1}$. Indeed, we see:\\\begin{align*}&\frac{\left(\left(\frac{b_{w-1,w-1}b_{w-1,w+1}}{b_{w-1,w}+1}\right)b_{i,w-2}-(b_{i,w-3}+1)\right)(b_{i,w-2}+1)}{b_{i,w-2}+b_{i,w-3}+1}\\
&=\frac{\left(\left(b_{w-1,w-1}-\frac{2(b_{w-1,w-1}+1)}{b_{w,w}+1}\right)b_{i,w-2}-(b_{i,w-3}+1)\right)(b_{i,w-2}+1)}{b_{i,w-2}+b_{i,w-3}+1} \qquad \text{by equation (\ref{Y-2-cut-holds})} \\
&=b_{i,w-1}-\frac{2(b_{w-1,w-1}+1)b_{i,w-2}(b_{i,w-2}+1)}{(b_{w,w}+1)(b_{i,w-2}+b_{i,w-3}+1)} \qquad \text{by Proposition \ref{y-diagonals} applied to $f_n := b_{i,w-1}$} \\ 
&=b_{i,w-1}-\frac{2(b_{i,w-1}+b_{i,w-2}+1)}{b_{w,w}+1} \qquad \text{by Corollary \ref{reduction} applied to $f_n := b_{i,w-1}$}\\
&=b_{i,w-1}-\frac{2b_{i,w-1}(b_{i,w-1}+1)}{b_{i,w}+b_{i,w-1}+1} \qquad \text{by Corollary \ref{reduction} applied to $f_n := b_{i,w}$}\\
&=\frac{b_{i,w-1}(b_{i,w}-(b_{i,w-1}+1))}{b_{i,w}+b_{i,w-1}+1}=\frac{b_{i,w-1}b_{i,w+1}}{b_{i,w}+1}\end{align*}

\noindent where the last equality follows from applying Proposition \ref{y-diagonals} to $f_n := b_{i,w+1}$ and using the assumption that $b_{w+1,w+1}=1$.

Since the $\mathbf{Y}$-diamond rule determines the lower entry of a diamond uniquely, and equation (\ref{y-diagonals2}) was derived from this rule, then any corresponding value satisfying equation (\ref{y-diagonals2}) must coincide with the value prescribed by the $\mathbf{Y}$-diamond rule. Therefore, as $\frac{b_{i,w-1}b_{i,w+1}}{b_{i,w}+1}$ satisfies equation (\ref{y-diagonals2}) and the $\mathbf{Y}$-diamond rule holds on the rows before $\frac{b_{i,w-1}b_{i,w+1}}{b_{i,w}+1}$, it follows that the $\mathbf{Y}$-diamond rule holds for $\frac{b_{i,w-1}b_{i,w+1}}{b_{i,w}+1}$.

The $\mathbf{Y}$-diamond rule holds for the second type of diamond analogously. 

Clearly $b'_{i,w-1}:=\frac{b_{i,w-1}b_{i,w+1}}{b_{i,w}+1}$ is (strictly) positive for all $2 \leq i \leq w-1$. By Proposition \ref{diagonal-divisibility} applied to $f_n:=b_{i,w}$ we see that it is positive.

Therefore, the proposed array $B'$ is a $\mathbf{Y}$-frieze pattern whose quiddity sequence comes from performing a $\mathbf{Y}$-$2$-cut on the first row of $B$.

\end{proof}

\begin{lemma}
\label{Y-2-lemma}
Let $B$ be a $\mathbf{Y}$-frieze pattern of width $w$, and let $B'$ be the $\mathbf{Y}$-frieze pattern obtained from a $\mathbf{Y}$-2-cut on $B$ with respect to the indexing $b_{w+1} := b_{w+1,w+1} = 1$.

Furthermore, let $A'$ be a frieze pattern of width $w-2$ such that $p_{w-2}(A') = B'$, and let $A$ be obtained from $A'$ by a corresponding 2-glue. Let $q(A) = (a_1,\ldots, a_{w+3})$ and $q(A') = (a'_1,\ldots, a'_{w+1})$ be the quiddity cycles of $A$ and $A'$ respectively.
Then the following holds: 
\begin{enumerate}[label=\normalfont(\roman*)]
\item if $A$ is obtained from $A'$ by a 2(a)-glue then $p_w(A) = B$ is equivalent to $b_{w} = a'_{w}+1$,
\item if $A$ is obtained from $A'$ by a 2(b)-glue then $p_w(A) = B$ is equivalent to $b_{w+2} = a'_{w+1}+1$.
\end{enumerate}
\end{lemma}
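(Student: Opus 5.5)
The plan is to compare quiddity cycles directly. Write $q(p_w(A)) = (c_1,\ldots,c_{w+3})$ with $c_i = a_ia_{i+1}-1$, and $q(B') = (b'_1,\ldots,b'_{w+1})$ with $b'_i = a'_ia'_{i+1}-1$, which holds because $p_{w-2}(A')=B'$. Since a $\mathbf{Y}$-frieze pattern is determined by its quiddity sequence, $p_w(A)=B$ is equivalent to $c_i=b_i$ for all $1\le i\le w+3$.

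In case (i), the 2(a)-glue at the position matching the $\mathbf{Y}$-$2$-cut at $w+1$ gives
\[
q(A)=(a'_1,\ldots,a'_{w-1},\,a'_w+2,\,1,\,2,\,a'_{w+1}+1).
\]
\begin{enumerate}
\item For $1\le i\le w-2$ we have $a_i=a'_i$ and $a_{i+1}=a'_{i+1}$. The $\mathbf{Y}$-$2$-cut leaves these entries untouched, so $b'_i=b_i$, and hence $c_i=b'_i=b_i$ automatically.
\item We also get $c_{w+1}=1\cdot 2-1=1=b_{w+1}$ for free.
\item The remaining entries of $p_w(A)$ are
\[
c_{w-1}=a'_{w-1}(a'_w+2)-1,\quad c_w=a'_w+1,\quad c_{w+2}=2a'_{w+1}+1,\quad c_{w+3}=(a'_{w+1}+1)a'_1-1.
\]
\end{enumerate}
The forward implication of (i) is then immediate: $p_w(A)=B$ forces $b_w=c_w=a'_w+1$.

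For the converse, assume $b_w=a'_w+1$. We recover the other three entries of $B$ in turn from the three defining formulas of the $\mathbf{Y}$-$2$-cut (Definition \ref{Y-n-cut}, whose validity for $B$ is given by Proposition \ref{Y-2-cut}).
\begin{enumerate}
\item The first formula rearranges to $(b_{w-1}+1)(b_w-1)=(b'_{w-1}+1)(b_w+1)$. Substituting $b'_{w-1}+1=a'_{w-1}a'_w$ and $b_w=a'_w+1$ gives $b_{w-1}+1=a'_{w-1}(a'_w+2)$, so $b_{w-1}=c_{w-1}$.
\item The middle formula gives $b'_w+1=(b_w-1)(b_{w+2}-1)/2$. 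Substituting $b'_w+1=a'_wa'_{w+1}$ and $b_w-1=a'_w$ yields $b_{w+2}=2a'_{w+1}+1=c_{w+2}$.
\item The last formula gives $(b'_{w+1}+1)(b_{w+2}+1)=(b_{w+3}+1)(b_{w+2}-1)$. With $b'_{w+1}+1=a'_{w+1}a'_1$ and the value of $b_{w+2}$ just found, this yields $b_{w+3}+1=a'_1(a'_{w+1}+1)$, i.e. $b_{w+3}=c_{w+3}$.
\end{enumerate}
All divisions are legitimate because $b_w-1=a'_w>0$ and $b_{w+2}-1=2a'_{w+1}>0$.

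Case (ii) is the mirror image. Here
\[
q(A)=(a'_1,\ldots,a'_{w-1},\,a'_w+1,\,2,\,1,\,a'_{w+1}+2),
\]
so $c_{w+2}=a'_{w+1}+1$ is the entry that is automatically forced. Starting from $b_{w+2}=a'_{w+1}+1$, one runs the same three substitutions in reverse order:
\begin{enumerate}
\item the last cut formula determines $b_{w+3}$;
\item the middle formula determines $b_w=2a'_w+1$;
\item the first formula determines $b_{w-1}$.
\end{enumerate}
Each is checked against the corresponding $c_i$.

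The main obstacle is bookkeeping rather than any single hard step. One must be sure the glue positions match the cut positions, with $a_{w-1}=a'_{w-1}$ unchanged and the new entries sitting at $w,\ldots,w+3$, so that the untouched range $1\le i\le w-2$ really coincides on both sides. One must also remember that index $w+1$ in $B'$ wraps around to give $a'_{w+1}a'_1$.
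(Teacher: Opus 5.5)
Your proof is correct and follows essentially the paper's argument. Both write out $q(A)$ after the glue and reduce $p_w(A)=B$ to the entrywise identities $b_i=a_ia_{i+1}-1$. Both then use the three $\mathbf{Y}$-2-cut relations (the paper's system $(*)$) to propagate the single hypothesis $b_w=a'_w+1$, resp.\ $b_{w+2}=a'_{w+1}+1$, to the remaining entries. The only difference is that the paper writes out case (ii) in full and calls (i) analogous, while you do the reverse.
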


\begin{proof}
Directly from Remark \ref{cut polygons} and Proposition \ref{double cut}, if $A$ is obtained from $A'$ by a 2(b)-glue then $q(A)=(a'_1,a'_2,...,a'_{w-1},a'_{w}+1,2,1,a'_{w+1}+2).$ 

Since friezes and $\mathbf{Y}$-friezes are determined by their quiddity sequences, we see that $p_{w}(A)=B$ is equivalent to $a_{i}a_{i+1} = b_{i} + 1$ holding for all $1 \leq i \leq w+3$ where $q(B) = (b_1,\ldots, b_{w+3})$. \newline

This may be restated as:

\begin{align*}
b_i =
\begin{cases}
        a'_ia'_{i+1} - 1, &1\leq i\leq w-2\\[0.5em]
        a'_{w-1}(a'_{w} +1)-1, &i=w-1\\[0.5em]
        2a'_{w} + 1, &i=w\\[0.5em]
        1, &i=w+1\\[0.5em]
        a'_{w+1} + 1, &i=w+2\\[0.5em]
        (a'_{w+1}+2)a'_1 -1, &i=w+3
\end{cases}
\end{align*}

Note that $b_{w+1} = 1$ follows immediately from the conditions of a $\mathbf{Y}$-2-cut. On the other hand, by assumption, we have $p_{w-2}(A')=B'$ which is equivalent to $a'_{i}a'_{i+1} = b'_{i} + 1$ holding for all $1 \leq i \leq w+1$ where $q(B') = (b'_1,\ldots, b'_{w+3})$. By the definition of $\mathbf{Y}$-2-cut, this may be restated as:

    \begin{align*}\label{system2}
    a'_{i}a'_{i+1} = \begin{cases}
        \qquad b_i+1, &1\leq i\leq w-2\\[0.5em] 
        \frac{(b_{w}-1)(b_{w-1}+1)}{b_{w}+1}, &i=w-1\\[0.5em] 
        \frac{(b_{w}-1)(b_{w+2}-1)}{2}, &i=w\\[0.5em] 
        \frac{(b_{w+2}-1)(b_{w+3}+1)}{b_{w+2}+1}, &i=w+1
        \end{cases}\tag{*}
    \end{align*}

Immediately from (\ref{system2}) we see $b_{i} = a'_ia'_{i+1} - 1$ for all $1\leq i\leq w-2$
. Now, let us assume $b_{w+2} = a'_{w+1}+1$. Considering $i=w$ from (\ref{system2}) we see $$ 
a'_{w}a'_{w+1} = \frac{(b_{w}-1)(b_{w+2}-1)}{2} = \frac{(b_{w}-1)a'_{w+1}}{2}.$$ Consequently, $b_{w} = 2a'_{w} + 1$.

Similarly, considering $i = w-1$ and $i = w+1$ from (\ref{system2}) and using $b_{w} = 2a'_{w} + 1$ we see: 
\begin{align*}
a'_{w-1}a'_{w} &= \frac{(b_{w}-1)(b_{w-1}+1)}{b_{w}+1} = \frac{a'_{w}(b_{w-1}+1)}{a'_{w} + 1}, \\
a'_{w+1}a'_{1} &= \frac{(b_{w+2}-1)(b_{w+3}+1)}{b_{w+2}+1} = \frac{a'_{w+1}(b_{w+3}+1)}{a'_{w+1}+2}.
\end{align*}

Hence, as desired, we also have $b_{w-1} = a'_{w-1}(a'_{w} +1)-1$ and $b_{w+3} = (a'_{w+1}+2)a'_1 -1$.

The proof of the case when $A$ is obtained from $A'$ by a 2(a)-glue follows analogously.
\end{proof}

\begin{proposition}\label{2-surjection}
     Let $B$ be a $\mathbf{Y}$-frieze pattern of width $w$, and let $B'$ be the $\mathbf{Y}$-frieze pattern obtained from a $\mathbf{Y}$-2-cut on $B$, as in Proposition \ref{Y-2-cut}. If $B'$ is in the range of $p_{w-2}$, then $B$ is in the range of $p_w$.
\end{proposition}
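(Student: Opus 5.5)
The plan is to take a frieze pattern $A'$ of width $w-2$ with $p_{w-2}(A')=B'$ and glue on a quadrilateral at the position of the cut. By Lemma \ref{Y-2-lemma}, a 2(a)-glue works precisely when $b_w=a'_w+1$, and a 2(b)-glue works precisely when $b_{w+2}=a'_{w+1}+1$. In general neither condition need hold for the particular $A'$ we start with. So the argument has two parts: first, use the freedom in choosing $A'$ within its $\mathbf{Y}$-equivalence class to force one of the two conditions; second, apply Lemma \ref{Y-2-lemma}.

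\textbf{Step 1 (the key relation).} From the $i=w$ line of (\ref{system2}) we have
\[
a'_wa'_{w+1}=\tfrac{(b_w-1)(b_{w+2}-1)}{2}.
\]
Set $r:=\frac{b_{w+2}-1}{a'_{w+1}}\in\mathbb{Q}_{>0}$, so that $\frac{b_w-1}{a'_w}=\frac{2}{r}$.
\begin{itemize}
\item If $r=1$, then $b_{w+2}=a'_{w+1}+1$, and we use a 2(b)-glue.
\item If $r=2$, then $b_w-1=a'_w$, and we use a 2(a)-glue.
\end{itemize}
In either case Lemma \ref{Y-2-lemma} finishes the proof. It remains to show that, after possibly replacing $A'$ by $\overline{A'}$, we may assume $r\in\{1,2\}$.

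\textbf{Step 2 (changing $A'$ within its class).} Define a sequence $(\overline{a}'_i)$ related to $(a'_i)$ by an alternating scalar factor, chosen so that $\overline{a}'_{w+1}=b_{w+2}-1$; equivalently, the scalar factor is $r$ or $1/r$ depending on the parity of $w+1$. Then $\overline{a}'_w=(b_w-1)/2$ automatically, and $\overline{a}'_i\overline{a}'_{i+1}=a'_ia'_{i+1}$ for all $i$.

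\begin{itemize}
\item \emph{Parity of the period.} For $r\neq 1$ this alternating rescaling is only consistent around the cycle if the period $w+1$ is even. So when $w$ is even I must show directly that $r=1$ or $r=2$. For this I would compute $\prod_i(b_i+1)^{(-1)^{i+1}}$ for $B$ and for $B'$ using Proposition \ref{product}, with the explicit cut formulas of Definition \ref{Y-n-cut}. The two expressions should differ by a factor expressible via $b_w$, $b_{w+2}$, $a'_w$ and $a'_{w+1}$, and combining $a_1^2$ with $a_1'^2$ should pin down $r\in\{1,2\}$.
\item \emph{Integrality.} When $w$ is odd, I will show that $(\overline{a}'_i)$ is a positive integer sequence. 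The entries $\overline{a}'_w$ and $\overline{a}'_{w+1}$ are integers once $b_w$ is odd. Oddness of $b_w$ should follow from integrality of $b'_w=\frac{(b_w-1)(b_{w+2}-1)}{2}-1$ together with the divisibility statements (Propositions \ref{divisibility} and \ref{diagonal-divisibility}). The remaining entries are obtained from these by dividing along the identities $\overline{a}'_i\overline{a}'_{i+1}=b'_i+1$, and I would get their integrality by propagating with Proposition \ref{divisibility}, which gives the alternating quotients $\frac{(b_i+1)(b_{i+2}+1)\cdots}{(b_{i+1}+1)\cdots}\in\mathbb{Z}$.
\item \emph{Conclusion for odd $w$.} Once $(\overline{a}'_i)$ is a positive integer sequence, Proposition \ref{ratio-classification} shows it is the quiddity sequence of a frieze pattern $\overline{A'}$. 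This pattern satisfies $p_{w-2}(\overline{A'})=B'$ and has the relevant ratio equal to $1$, so the 2(b)-glue of Step 1 applies to it.
\end{itemize}

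\textbf{Main obstacle.} The delicate point is integrality of the rescaled sequence (equivalently, $r\in\{1,2\}$, or more generally $r$ among the factors $\{\tfrac13,\tfrac12,2,3\}$ allowed by Corollary \ref{Cuntz-Holm}). I would attack it first through the product identity of Proposition \ref{product}, applied to both $B$ and $B'$. The aim is to express the alternating product of $B$ in terms of that of $B'$, and from this deduce that $b_w-1$ and $b_{w+2}-1$ factor compatibly with $a'_w$ and $a'_{w+1}$.
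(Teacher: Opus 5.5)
Your reduction via Lemma \ref{Y-2-lemma} and your split by the parity of $w+1$ match the paper. The reduction is to force $b_w=a'_w+1$ or $b_{w+2}=a'_{w+1}+1$, i.e.\ $r\in\{2,1\}$. However, the decisive step is missing in both parity cases.

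\textbf{The case $w$ odd.} You try only one normalisation, $\overline{a}'_{w+1}=b_{w+2}-1$ (the 2(b)-glue), and you claim $b_w$ must be odd. That claim is false.
\begin{itemize}
\item Take $w=5$ and $q(B)=(2,2,2,8,2,1,7,3)$. This is $q(p_5(A))$ for the frieze $A$ with $q(A)=(1,3,1,3,3,1,2,4)$, which appears in Example \ref{Y-2-bad_choice}.
\item Here $b_6=1$, $b_w=2$ and $b_{w+2}=7$. Also $q(B')=(2,2,2,2,2,2)$, so $b'_w=2$ is an integer, yet your rescaled sequence has $\overline{a}'_w=\tfrac12$.
\item If the given $A'$ is $(3,1,3,1,3,1)$, then $r=6$ and your Step~2 produces a non-integral sequence. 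The only glue that works is the 2(a)-glue onto $(1,3,1,3,1,3)$, i.e.\ $r=2$.
\end{itemize}
So you need both candidates and an argument that at least one of them is integral. These are the paper's $c$, normalised by $c_{w+1}=b_{w+2}-1$, and $d$, normalised by $d_w=b_w-1$; both satisfy (\ref{system2}). The paper proves integrality of one of them by a parity walk:
\begin{itemize}
\item One has $c_i=2d_i$ for $i$ even and $d_i=2c_i$ for $i$ odd. Moreover $c_{2k}$ and $d_{2k+1}$ are integers by Proposition \ref{divisibility}.
\item Walking down from index $w$, both sequences stay integral while these guaranteed terms are even.
\item At the first odd one, say $d_{2k+1}$, each remaining even-indexed $d_i$ satisfies $2d_i=c_i\in\mathbb{Z}$. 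It also satisfies $d_i d_{2k+1}\in\mathbb{Z}$, since this product is an alternating quotient, again by Proposition \ref{divisibility}. Since $d_{2k+1}$ is odd, $d_i\in\mathbb{Z}$.
\end{itemize}

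\textbf{The case $w$ even.} Comparing alternating products via Proposition \ref{product} cannot work, for two reasons.
\begin{itemize}
\item Applying it to $B$ presupposes a frieze $A$ with $p_w(A)=B$, which is exactly what you are proving.
\item Because $b_{w+1}+1=2$, the cut formulas make the alternating products of $B$ and $B'$ literally equal. So nothing about $r$ can come out of comparing them.
\end{itemize}
The missing idea is this. From (\ref{system2}) one computes $r^2=2q$, where
\[
q=\frac{(b_{w+2}+1)(b_1+1)(b_3+1)\cdots(b_{w-1}+1)}{(b_{w+3}+1)(b_2+1)(b_4+1)\cdots(b_{w-2}+1)(b_w+1)}.
\]
Now apply Proposition \ref{divisibility} to $B$ itself, which needs no preimage. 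Using $b_{w+1}+1=2$, it shows that both $2q$ and $2/q$ are integers. Hence $2q\in\{1,2,4\}$. Since $r$ is rational, $2q=2$ is excluded, so $r\in\{1,2\}$.
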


\begin{proof}
    To prove the proposition, we will show there is a frieze pattern $A'$ of width $w-2$ such that: $$ p_{w-2}(A')=B' \qquad \text{and} \qquad p_{w}(A)=B$$
\noindent where $A$ is a frieze pattern obtained by performing a $2$-glue on $A'$.

Directly from Lemma \ref{Y-2-lemma}, in order to show this, it suffices to find a frieze pattern $A'$ such that $p_{w-2}(A') = B'$ and:
    \begin{enumerate}[label=\normalfont(\roman*)]
\item $b_{w} = a'_{w}+1$, if $A$ is obtained from $A'$ by a 2(a)-glue,
\item $b_{w+2} = a'_{w+1}+1$, if $A$ is obtained from $A'$ by a 2(b)-glue.
    \end{enumerate}

Note that, as Examples \ref{Y-2-bad_choice} and \ref{Y-2-bad_choice2} demonstrate, care is required when choosing $A'$ if $B$ has odd width. Indeed, it is possible that $p_{w-2}(A') = B'$ but $p_{w}(A)\neq B$ for any choice of 2-glue relating $A'$ to $A$. The rest of our proof addresses this delicate point; detailing how to avoid this ``bad'' choice of frieze $A'$, and arrive at the desired one. We shall divide into two cases by the parity of $w+1$.

    \textbf{Case 1}: $w+1$ is even.

    Based on the conditions required for a 2(a) and 2(b)-glue, we will now construct two rational sequences $(d_1,\ldots, d_{w+1})$ and $(c_1,\ldots, c_{w+1})$
 and argue that one of them consists entirely of integers, and is our desired $q(A')$.

    Setting $c_{w+1}:=b_{w+2}-1$, we then inductively generate the remaining $c_i$ for $w \geq i \geq 1$ using (\ref{system2}). Doing so, we obtain: 
\[
c_i :=
\begin{cases}
\dfrac{b_w-1}{b_{w+1}+1},
& \text{if } i=w,\\[2ex]
\dfrac{(b_i+1)(b_{i+2}+1)\cdots(b_{w+1}+1)}
{(b_{i+1}+1)(b_{i+3}+1)\cdots(b_w+1)},
& \text{if } 1\le i\le w-1 \text{ and } i \text{ is even},\\[3ex]
\dfrac{(b_i+1)(b_{i+2}+1)\cdots(b_w+1)}
{(b_{i+1}+1)(b_{i+3}+1)\cdots(b_{w+1}+1)},
& \text{if } 1\le i\le w-1 \text{ and } i \text{ is odd},
\end{cases}
\]

\noindent where $b_{w+1}+1 = 2$. Furthermore, note that $(c_1,\ldots, c_{w+1})$ satisfies (\ref{system2}). Indeed, this follows from construction and the equality $\frac{(b_1+1)(b_{3}+1)\cdots(b_w+1)}
{(b_{2}+1)(b_{4}+1)\cdots(b_{w+1}+1)} = \frac{b_{w+3}+1}{b_{w+2}+1}$. In particular, this equality is implied by Proposition \ref{product} applied to $B'$, and the use of (\ref{system2}). We stress this is a valid application of Proposition \ref{product} from the assumption that $p_{w-2}(A') = B'$ for some frieze $A'$.

    Similarly, setting $d_w:=b_w-1$ and following (\ref{system2}) yields:
\[
d_i :=
\begin{cases}
\dfrac{(b_i+1)(b_{i+2}+1)\cdots(b_w+1)}
{(b_{i+1}+1)(b_{i+3}+1)\cdots(b_{w-1}+1)},
& \text{if } 1\le i\le w-1 \text{ and } i \text{ is odd},\\[3ex]
\dfrac{(b_i+1)(b_{i+2}+1)\cdots(b_{w-1}+1)}
{(b_{i+1}+1)(b_{i+3}+1)\cdots(b_w+1)},
& \text{if } 1\le i\le w-1 \text{ and } i \text{ is even},\\[3ex]
\dfrac{(b_{w+2}-1)(b_{w+3}+1)(b_{2}+1)(b_{4}+1)\cdots(b_{w-1}+1)}
{(b_{w+2}+1)(b_{1}+1)(b_{3}+1)\cdots(b_w+1)} = \dfrac{b_{w+2}-1}{2},
& \text{if } i=w+1,\\[2ex]
\end{cases}
\]

\noindent where the equality $\frac{(b_{w+2}-1)(b_{w+3}+1)(b_{2}+1)(b_{4}+1)\cdots(b_{w-1}+1)}
{(b_{w+2}+1)(b_{1}+1)(b_{3}+1)\cdots(b_w+1)} = \frac{b_{w+2}-1}{2}$ follows from Proposition \ref{product} applied to $B'$, and the use of (\ref{system2}). Consequently, $(d_1,\ldots, d_{w+1})$ satisfies (\ref{system2}).

Note that we have the following, where the integrality follows by Proposition \ref{divisibility}:
\begin{equation}
\label{Y-2-oscillating}
\left\{
\begin{aligned}
&2d_i=c_i\in\mathbb{Z}, &&\text{if $i$ is even},\\
&2c_i=d_i\in\mathbb{Z}, &&\text{if $i$ is odd}.
\end{aligned}
\right.
\end{equation} 



Now, consider the following sequence: \begin{equation}
\label{Y-2-candidate-sequence}
d_w,\; c_{w-1},\; d_{w-1},\; c_{w-2},\; d_{w-2},\;\ldots,\; c_1,\; d_1,\; c_{w+1},\; d_{w+1}.
\end{equation}

By (\ref{Y-2-oscillating}), if a given term $d_{2k+1}$ or $c_{2k}$ is even then $c_{2k+1}$ or $d_{2k}$ is an integer, respectively. Hence, moving down the sequence (\ref{Y-2-candidate-sequence}), all entries are guaranteed to be integers until we meet a term $d_{2k+1}$ or $c_{2k}$ which is odd. On the other hand, for any $0 \leq k \leq \frac{w-1}{2}$, if $d_{2k+1}$ is odd, then every subsequent term $d_{2k+1},d_{2k},\ldots d_1 ,d_{w+1}$ is an integer.
 
Indeed, for each such subsequent term $d_i$, if $i$ is odd then $d_i \in \mathbb{Z}$ by (\ref{Y-2-oscillating}), so it remains to consider when $i$ is even. In that case, by Proposition \ref{divisibility} the product $d_i\cdot d_{2k+1} \in \mathbb{Z}$ for $2k+1 \geq i \geq 2$ and an integer multiple of $\frac{(b_{w+2}-1)}{(b_{w+2}+1)}$ if $i = w+1$. Furthermore, since $2d_i \in \mathbb{Z}$ by (\ref{Y-2-oscillating}), then $d_i \in \mathbb{Z}$ by the assumption that $\gcd(d_{2k+1},2) = 1$.

%
%
%

Likewise, for any $1 \leq k \leq \frac{w-1}{2}$, if $c_{2k}$ is odd, then every subsequent term $c_{2k},c_{2k-1},\ldots,c_1, c_{w+1}$ is an integer. 

Therefore, as desired, either $(c_1,\ldots, c_{w+1})$ or $(d_1,\ldots, d_{w+1})$ is an integer sequence, and we choose $q(A'): = (a'_1,\ldots, a'_n)$ to be one such sequence.

    Finally, by assumption, since there exists $(a''_i)_{i\in\mathbb{Z}}$ that also satisfies equations (\ref{system2}), then our integer choice of $(a'_i)_{i\in\mathbb{Z}}$ is a frieze pattern by Proposition \ref{ratio-classification}. This completes the proof of case 1.

    \textbf{Case 2}: $w+1$ is odd.

    Since $w+1$ is odd and by assumption the system of equations (\ref{system2}) has a solution, there is a unique positive solution $(a'_i)_{i\in\mathbb{Z}}$.

    We can compute ${a'_w}^2$ by considering ${a'_w}^2 = \frac{(a'_1a'_2)\cdots (a'_{w-1}a'_w)(a'_wa'_{w+1})}{(a'_{w+1}a'_1)(a'_2a'_3)\cdots (a'_{w-2}a'_{w-1})}$ and using the equations (\ref{system2}) to get:

    \[{a'}_w^2=\frac{(b_{w+2}+1)(b_1+1)(b_3+1)\cdots(b_{w-1}+1)(b_{w}-1)^2}{2(b_{w+3}+1)(b_2+1)(b_4+1)\cdots(b_{w-2}+1)(b_w+1)}\]

    Consider $q=\frac{(b_{w+2}+1)(b_1+1)(b_3+1)\cdots(b_{w-1}+1)}{(b_{w+3}+1)(b_2+1)(b_4+1)\cdots(b_{w-2}+1)(b_w+1)}$. Since $b_{w+1}+1=2$ then $$2q=\frac{(b_{w+2}+1)(b_1+1)(b_3+1)\cdots(b_{w-1}+1)(b_{w+1}+1)}{(b_{w+3}+1)(b_2+1)(b_4+1)\cdots(b_{w-2}+1)(b_w+1)}$$ is a positive integer by Proposition \ref{divisibility}. On the other hand, $$\frac{2}{q}=\frac{(b_{w+1}+1)(b_{w+3}+1)(b_2+1)(b_4+1)\cdots(b_{w-2}+1)(b_w+1)}{(b_{w+2}+1)(b_1+1)(b_3+1)\cdots(b_{w-1}+1)}$$ is also a positive integer by Proposition \ref{divisibility}. Hence $q \in \{\frac{1}{2}, 1, 2\}$.

    If $q=\frac{1}{2}$, then $a'_w=\frac{b_w-1}{2}$ (so $a'_{w+1}=b_{w+2}-1$), and if $q=2$, then $a'_w=b_w-1$. If $q=1$, then $a'_w=\frac{b_w-1}{\sqrt{2}}$ which is absurd since $a'_w$ is a frieze pattern entry and hence must be an integer.

    In summary, for our frieze pattern $A'$ satisfying $p_{w-2}(A') = B'$ we have either $a'_w=b_w-1$ or $a'_{w+1}=b_{w+2}-1$. Therefore, Lemma \ref{Y-2-lemma} guarantees $p_w(A) = B$, and the proof of case 2 is complete.
    
\end{proof}

\begin{example}\label{Y-2-bad_choice}
    Consider the $\mathbf{Y}$-frieze pattern $B$ of width $w=5$ with $q(B)=(2,2,2,3,7,1,2,8)$, then $q(B')=(2,2,2,2,2,2)$. By choosing $A'$ with $q(A')=(1,3,1,3,1,3)$ (check that $p_{w-2}(A')=B'$), there are two choices of $A$: $q(A)$ is either $(1,3,1,3,3,1,2,4)$ or $(1, 3,1,3,2,2,1,5)$, but $q(p_w(A))=(2, 2,2,8,2,1,7,3)$ or $(2,2,2,5,3,1,4,4)$ respectively, both of them are different from $q(B)$ since the ``good'' choice of $A'$ is the one with $q(A')=(3,1,3,1,3,1)$, which is obtained by performing a $2$-(b) cut on $A$ with $q(A)=(3,1,3,1,4,2,1,3)$
\end{example}

\begin{example}\label{Y-2-bad_choice2}
    From Example \ref{Y-2-bad_choice} above, one may get the impression that ``bad'' choices of $A'$  may be salvaged by performing a $2$-glue at a different place (and shifting or reversing the indexing). However, that does not work in general. Indeed, consider the $\mathbf{Y}$-frieze $B$ of width $w = 7$ with $q(B)=(3,3,3,3,1,9,4,1,5,2)$. Then $q(B')=(3,3,3,1,5,5,1,3)$ and there are two friezes $A'$ such that $q(A') = B'$; the ``good'' choice of $A'$ has $q(A')=(4,1,4,1,2,3,2,1)$ and the ``bad'' choice has $q(A')=(2,2,2,2,1,6,1,2)$. Moreover, for the ``bad'' choice of $A'$, one can easily verify there is no frieze $A$ arising from a $2$-glue on $A'$ such that $p_{w}(A) = B$.
\end{example}

\begin{proposition}\label{Y-3-cut}
    Let $B = (b_{i,j})$ be a $\mathbf{Y}$-frieze pattern of width $w\geq 4$.
    
    If the first row of $B$ contains a subsequence of the form $(x,2,2,y)$ such that one of $x,y$ is not congruent to $2$ modulo $3$, then by performing a $\mathbf{Y}$-$3$-cut on the first row, we obtain the quiddity sequence of a $\mathbf{Y}$-frieze pattern $B' = (b'_{i,j})$ of width $w-3$.
    
\end{proposition}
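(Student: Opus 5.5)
The plan is to copy the structure of the proof of Proposition \ref{Y-2-cut}, with the pentagon-removing $3$-cut of Remark \ref{cut polygons} in place of the quadrilateral. Since $w\ge 4$, we may relabel so that the two $2$'s sit at $b_{w,w}=b_{w+1,w+1}=2$. Then $x=b_{w-1,w-1}$ and $y=b_{w+2,w+2}$, and by the reflection symmetry of the setup we assume $x\not\equiv 2 \pmod 3$, i.e.\ $3\nmid x+1$.

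\textbf{Step 1: define $B'$.} By glide symmetry it suffices to specify $b'_{i,j}$ for $1\le i\le j\le w-2$. Set $b'_{i,j}:=b_{i,j}$ for $j\le w-3$. The entry in the last column should be a ``skip'' entry jumping from column $w-2$ to column $w+1$. I would guess it has the shape
\[
b'_{i,w-2}:=\frac{b_{i,w-2}\,b_{i,w+1}}{(b_{i,w-1}+1)(b_{i,w}+1)}\cdot R_i ,
\]
where $R_i$ is a correction factor. I would pin down $R_i$, and check the guessed shape itself, by computing $p_{w}$ of a frieze containing $(1,3,1)$ and comparing with the commutative diagram in the remark after Definition \ref{Y-n-cut}. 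Whatever the exact form, the defining requirement is that the new entries satisfy equation (\ref{y-diagonals2}) along each diagonal.

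\textbf{Step 2: the quiddity sequence of $B'$ is the $\mathbf{Y}$-$3$-cut.} Only three entries of the quiddity sequence change: $b'_{w-2,w-2}$, $b'_{w-1,w-1}=b_{1,w-3}$ (by glide symmetry), and $b'_{w,w}$. For the two outer entries, apply Proposition \ref{y-diagonals} with $b_{w}=b_{w+1}=2$ inserted, as in the computation leading to (\ref{Y-2-cut-holds}). Simplifying with the $\mathbf{Y}$-diamond rules near the top boundary should give $b_{w-2}-\tfrac{2(b_{w-2}+1)}{b_{w-1}+1}$ and its mirror image. For the middle entry, look at the small triangle of entries of $B$ below $b_{w-1},2,2,b_{w+2}$ and apply the diamond rule three times. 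This should yield $b_{1,w-3}=(b_{w-1}-1)(b_{w+2}-1)-1$.

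\textbf{Step 3: the diamond rule holds in $B'$.} The only diamonds that need checking are those containing a new entry $b'_{i,w-2}$. For the first type, I would show that $b'_{i,w-2}$ satisfies (\ref{y-diagonals2}) along its diagonal. The tools are Proposition \ref{y-diagonals}, applied successively to $b_{i,w-1}$, $b_{i,w}$ and $b_{i,w+1}$, together with Corollary \ref{reduction}. Uniqueness of the lower entry in a diamond then gives the rule, exactly as in the proof of Proposition \ref{Y-2-cut}. The second type of diamond follows by the glide symmetry.

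\textbf{Step 4 (main obstacle): positivity and integrality of the new entries.} The outer quiddity entries are integers because Proposition \ref{divisibility} makes $\frac{(b_{w-2}+1)(b_w+1)(b_{w+2}+1)}{(b_{w-1}+1)(b_{w+1}+1)}$ an integer, and hence also $\frac{(b_{w-2}+1)(b_{w+2}+1)}{b_{w-1}+1}$. This is where the mod $3$ hypothesis should enter. Integrality of the column entries needs an analogue of Proposition \ref{diagonal-divisibility} involving four consecutive diagonal entries, and that analogue leaves a residual factor of $3=b_w+1$ in a denominator. I would try first to show that this factor divides $(b_{w-1}+1)\cdot(\text{integer})$, so that $3\nmid x+1$ forces it to divide the integer part. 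If that fails, I would use the integrality of $\frac{(b_{w-1}+1)(b_{w+1}+1)}{b_w+1}$ from Proposition \ref{divisibility} as the starting point instead. Positivity would follow from the growth estimates of Lemma \ref{growth}, or directly by writing each new entry as a positive combination of diagonal entries of $B$.
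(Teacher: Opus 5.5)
Your Steps 1--3 have the same architecture as the paper's proof: a new skip column, the triangle computation (your value $(b_{w-1}-1)(b_{w+2}-1)-1$ is right), and the check of (\ref{y-diagonals2}) plus uniqueness of the lower diamond entry. But there is a gap at the centre: \emph{you never define $B'$.} The whole proof verifies identities for one specific expression,
\[
b'_{i,w-2}:=\frac{b_{i,w-2}\,b_{i,w+1}\,(b_{i,w}+b_{i,w-1}+1)}{3\,(b_{i,w-1}+1)(b_{i,w}+1)},\qquad\text{i.e. } R_i=\tfrac13\bigl(b_{i,w-1}+b_{i,w}+1\bigr).
\]
With $R_i$ left open, none of the computations in Steps 2--4 beyond the triangle can be carried out. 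Fixing $R_i$ via the commutative diagram is only a heuristic, because that diagram applies only to $\mathbf{Y}$-friezes already known to lie in the image of $p_w$. Whatever you guess must then be verified directly, as the paper does. The explicit formula also settles two things you left open. First, $b'_{1,w-2}=0$ because $b_{1,w+1}=0$, so the bottom boundary row appears where it should for width $w-3$. Second, positivity for $2\le i\le w-2$ is immediate, since every factor is a positive entry of $B$. Lemma \ref{growth} is not needed, and its hypothesis that all first-row entries exceed $1$ is not available anyway.

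Your integrality argument in Step 4 is also wrong as written. Integrality of $\frac{(b_{w-2}+1)(b_{w+2}+1)}{b_{w-1}+1}$ does not imply $(b_{w-1}+1)\mid 2(b_{w-2}+1)$. Since your reasoning never uses $3\nmid x+1$, it cannot be correct. For instance, take $B=p_4(A)$ with $q(A)=(4,1,2,3,1,3,1)$, so $q(B)=(3,1,5,2,2,2,3)$ and $(b_{w-2},b_{w-1},b_w,b_{w+1},b_{w+2})=(1,5,2,2,2)$. Your quotient equals $1$, yet the cut entry is $1-\tfrac46=\tfrac13$. The paper's mechanism is as follows. Writing $b_{i,w}+b_{i,w-1}+1=(b_{i,w}+1)(b_{i,w-1}+1)-b_{i,w}b_{i,w-1}$ gives
\[
3b'_{i,w-2}=b_{i,w-2}b_{i,w+1}-\frac{b_{i,w-2}b_{i,w}}{b_{i,w-1}+1}\cdot\frac{b_{i,w-1}b_{i,w+1}}{b_{i,w}+1}\in\mathbb{Z}
\]
by two applications of the existing three-term Proposition \ref{diagonal-divisibility}, so no new four-term lemma is required. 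Separately, the chain of equalities in Step 3 passes through $b'_{i,w-2}=b_{i,w-2}-\frac{2(b_{i,w-2}+b_{i,w-3}+1)}{b_{w-1,w-1}+1}$, so $(x+1)\,b'_{i,w-2}\in\mathbb{Z}$. Since $\gcd(3,x+1)=1$, $b'_{i,w-2}$ is an integer. This is exactly the ``$3$ divides $(b_{w-1}+1)\cdot(\text{integer})$'' step you hoped for, and it is where the hypothesis enters. The outer quiddity entries are just the case $i=w-2$ and need no separate argument.
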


\begin{proof}
    We prove this by constructing staggered rows of positive integers enclosed by rows of $0$s, showing that the first row comes from the $\mathbf{Y}$-$3$-cut and that the $\mathbf{Y}$-diamond rule holds.

Without loss of generality, assume $b_{w,w}=b_{w+1,w+1}=2$ and $x = b_{w-1,w-1} \not\equiv 2 \mod 3$.

Due to glide symmetry, to describe $B'$ it suffices to specify the entries $b'_{i,j}$ for $1 \leq i \leq j \leq w-2$, and we do so as follows:
\begin{align}
    b'_{i,j} &:= b_{i,j} \hspace{-20mm}&\text{for}& \ \ 1 \leq i \leq j \leq w-3. \label{Y3cut-inherit}
    \\[1em]  
    b'_{i,w-2} &:= \frac{b_{i,w-2}b_{i,w+1}(b_{i,w}+b_{i,w-1}+1)}{3(b_{i,w-1}+1)(b_{i,w}+1)} \hspace{-20mm}& \text{for}& \ \ 1 \leq i \leq w-2. \label{Y3cut-new}
\end{align}

As in the proof of Proposition \ref{Y-2-cut}, we begin by showing that the quiddity sequence of $B'$ arises from a $\mathbf{Y}$-$3$-cut on the quiddity sequence of $B$. This amounts to showing the following equality holds:


\begin{align*}
& \Big(b'_{w-2,w-2}, \ b'_{w-1,w-1}, \ b'_{w,w}\Big) \\
:=& \left(\frac{b_{w-2,w-2}b_{w-2,w+1}(b_{w-2,w}+b_{w-2,w-1}+1)}{3(b_{w-2,w-1}+1)(b_{w-2,w}+1)}, \ b_{1,w-3}, \ \frac{b_{2,w-2}b_{2,w+1}(b_{2,w}+b_{2,w-1}+1)}{3(b_{2,w-1}+1)(b_{2,w}+1)}\right) \\
= &\left(b_{w-1}-\frac{2(b_{w-1}+1)}{b_{w}+1},(b_{w}-1)(b_{w+3}-1)-1,b_{w+4}-\frac{2(b_{w+4}+1)}{b_{w+3}+1}\right).
\end{align*}

Firstly, observe that we have: 

\begin{alignat}{2}
&\frac{b_{w-2,w-2}b_{w-2,w+1}(b_{w-2,w}+b_{w-2,w-1}+1)}{3(b_{w-2,w-1}+1)(b_{w-2,w}+1)} \nonumber 
&&   
\\[1em]
&=\frac{b_{w-2,w-2}\frac{(b_{w+1,w+1}b_{w-2,w}-(b_{w-2,w-1}+1))(b_{w-2,w}+1)}{b_{w-2,w}+b_{w-2,w-1}+1}(b_{w-2,w}+b_{w-2,w-1}+1)}{3(b_{w-2,w-1}+1)(b_{w-2,w}+1)} 
&&
\parbox[t]{4.5cm}{\raggedleft
by Proposition \ref{y-diagonals} applied to $f_n := b_{w-2,w+1}$} \nonumber 
\\[1em] 
&=\frac{b_{w-2,w-2}(2b_{w-2,w}-(b_{w-2,w-1}+1))}{3(b_{w-2,w-1}+1)}
&&
\parbox[t]{4.5cm}{\raggedright
since $b_{w+1,w+1}=2$} \nonumber 
\\[1em] 
&= \frac{b_{w-2,w-2}\left(\frac{2(b_{w,w}b_{w-2,w-1}-(b_{w-2,w-2}+1))(b_{w-2,w-1}+1)}{b_{w-2,w-1}+b_{w-2,w-2}+1}-(b_{w-2,w-1}+1)\right)}{3(b_{w-2,w-1}+1)}  
&&
\parbox[t]{4.5cm}{\raggedright by Proposition \ref{y-diagonals} applied to $f_n := b_{w-2,w}$} \nonumber 
\\[1em]
&= \frac{b_{w-2,w-2}\left(2(2b_{w-2,w-1}-(b_{w-2,w-2}+1))-(b_{w-2,w-1}+b_{w-2,w-2}+1))\right)}{3(b_{w-2,w-1}+b_{w-2,w-2}+1)} 
&&
\parbox[t]{4.5cm}{\raggedright \hspace{3mm} since $b_{w,w}=2$} \nonumber 
\\[1em]
&= b_{w-2,w-2}-\frac{2b_{w-2,w-2}(b_{w-2,w-2}+1)}{(b_{w-2,w-1}+b_{w-2,w-2}+1)} \nonumber 
&&
\\[1em]
&= b_{w-2,w-2}-\frac{2(b_{w-2,w-2}+1)}{b_{w-1,w-1}+1}
\label{Y-3-cut-holds}
\end{alignat}

\noindent where the final equality follows from $b_{w-2,w-1} = b_{w-2,w-2}b_{w-1,w-1} -1$ by the $\mathbf{Y}$-diamond rule.

Analogously, we also have $\frac{b_{2,w-2}b_{2,w+1}(b_{2,w}+b_{2,w-1}+1)}{3(b_{2,w-1}+1)(b_{2,w}+1)}=b_{w+4}-\frac{2(b_{w+4}+1)}{b_{w+3}+1}$. 

Finally, observe that $b_{w-1,w+2} = b_{1,w-3}$ (due to glide symmetry) belongs to the following triangle of entries in $B$:

\[
\begin{matrix}
0 && 0 && 0 && 0 && 0\\
& b_{w-1,w-1} && 2 && 2 && b_{w+2,w+2} &\\
&& b_{w-1,w} && b_{w,w+1} && b_{w+1,w+2} &&\\
&&& b_{w-1,w+1} && b_{w,w+2} &&&\\
&&&& b_{1,w-3} &&&&
\end{matrix}
\]

Using the $\mathbf{Y}$-diamond rule to complete the entries of this triangle from the top down, we obtain: $$b_{1,w-3} = (b_{w-1,w-1}-1)(b_{w+2,w+2} -1)$$


Therefore, as desired, the first row of $B'$ arises from a $\mathbf{Y}$-$3$-cut on the first row of $B$.

To prove the $\mathbf{Y}$-diamond rule holds everywhere in $B'$ it suffices to show that the rule holds on each diamond that involves the entry $b'_{i,w-2} := \frac{b_{i,w-2}b_{i,w+1}(b_{i,w}+b_{i,w-1}+1)}{3(b_{i,w-1}+1)(b_{i,w}+1)}$ for some $i \in \{1,\ldots, w-2 \}$, since these are the only entries not in $B$. For each $i \in \{1,\ldots, w-2\}$ there are two such types of diamonds:

\[
\begin{matrix}
    &&b_{i+1,w-3}&&\\
    &b_{i,w-3}&& b'_{i+1,w-2}\\
    &&b'_{i,w-2}&&
\end{matrix}
\qquad \text{and} \ \qquad
\begin{matrix}
    &&b'_{i+1,w-2}&&\\
    &b'_{i,w-2}&&b_{1,i-1}\\
    &&b_{1,i-2}
\end{matrix}
\]

As in Proposition \ref{Y-2-cut}, to show the $\mathbf{Y}$-diamond rule holds on the left diamond, one may equivalently show that the entries in $B'$ satisfy equation (\ref{y-diagonals2}) for $f_n := \frac{b_{i,w-2}b_{i,w+1}(b_{i,w}+b_{i,w-1}+1)}{3(b_{i,w-1}+1)(b_{i,w}+1)}$. We now verify these entries do indeed satisfy equation (\ref{y-diagonals2}):
\begin{alignat*}{2}
&
\frac{\left(\left(\frac{b_{w-2,w-2}b_{w-2,w+1}(b_{w-2,w}+b_{w-2,w-1}+1)}
{3(b_{w-2,w-1}+1)(b_{w-2,w}+1)}\right)b_{i,w-3}-(b_{i,w-4}+1)\right)
(b_{i,w-3}+1)}
{b_{i,w-3}+b_{i,w-4}+1}
&&
\\[1ex]
&=
\frac{\left(\left(b_{w-2,w-2}-\frac{2(b_{w-2,w-2}+1)}
{b_{w-1,w-1}+1}\right)b_{i,w-3}-(b_{i,w-4}+1)\right)
(b_{i,w-3}+1)}
{b_{i,w-3}+b_{i,w-4}+1}
&&
\text{by equation (\ref{Y-3-cut-holds})}
\\[1ex]
&=
b_{i,w-2}
-\frac{2(b_{w-2,w-2}+1)b_{i,w-3}(b_{i,w-3}+1)}
{(b_{w-1,w-1}+1)(b_{i,w-3}+b_{i,w-4}+1)}
&&
\parbox[t]{7cm}{\raggedright
by Proposition \ref{y-diagonals}\\
applied to $f_n:=b_{i,w-2}$}
\\[1ex]
&=
b_{i,w-2}
-\frac{2(b_{i,w-2}+b_{i,w-3}+1)}
{b_{w-1,w-1}+1}
&&
\parbox[t]{7cm}{\raggedright
by Corollary \ref{reduction}\\
applied to $f_n:=b_{i,w-2}$}
\\[1ex]
&=
b_{i,w-2}
-\frac{2b_{i,w-2}(b_{i,w-2}+1)}
{b_{i,w-1}+b_{i,w-2}+1}
=
\frac{b_{i,w-2}(b_{i,w-1}-(b_{i,w-2}+1))}
{b_{i,w-1}+b_{i,w-2}+1}
&&
\parbox[t]{7cm}{\raggedright
by Corollary \ref{reduction}\\
applied to $f_n:=b_{i,w-1}$}
\\[1ex]
&=
\frac{b_{i,w-2}(2b_{i,w-1}-(b_{i,w-2}+1))}
{b_{i,w-1}+b_{i,w-2}+1}
-\frac{b_{i,w-2}b_{i,w-1}}
{b_{i,w-1}+b_{i,w-2}+1}
&&
\\[1ex]
&=
\frac{b_{i,w-2}b_{i,w}}
{b_{i,w-1}+1}
-\frac{b_{i,w-2}(b_{i,w}+b_{i,w-1}+1)}
{3(b_{i,w-1}+1)}
&&
\hspace{-2.5cm}\parbox[t]{6.5cm}{\raggedright
by $b_{w,w}=2$ and both Proposition \ref{y-diagonals} and Corollary \ref{reduction} applied to $f_n:=b_{i,w}$ on each term, respectively}
\\[1ex]
&=
\frac{b_{i,w-2}(2b_{i,w}-(b_{i,w-1}+1))}
{3(b_{i,w-1}+1)}
&&
\\[1ex]
&=
\frac{b_{i,w-2}b_{i,w+1}(b_{i,w}+b_{i,w-1}+1)}
{3(b_{i,w-1}+1)(b_{i,w}+1)}
&&
\hspace{-1.5cm}\parbox[t]{6cm}{\vspace{-0.5cm}\raggedright
by $b_{w+1,w+1}=2$ and Proposition \ref{y-diagonals} applied to $f_n:=b_{i,w+1}$.}
\end{alignat*}

The $\mathbf{Y}$-diamond rule holds for the second type of diamond analogously. 

Clearly $b'_{i,w-2}:=\frac{b_{i,w-2}b_{i,w+1}(b_{i,w}+b_{i,w-1}+1)}{3(b_{i,w-1}+1)(b_{i,w}+1)}$ is (strictly) positive for all $2 \leq i \leq w-2$. To see that it is an integer, observe that \begin{align*}
b'_{i,w-2}&=\frac{b_{i,w-2}b_{i,w+1}(b_{i,w}b_{i,w-1}+b_{i,w}+b_{i,w-1}+1)}{3(b_{i,w-1}+1)(b_{i,w}+1)}-\frac{b_{i,w-2}b_{i,w+1}b_{i,w}b_{i,w-1}}{3(b_{i,w-1}+1)(b_{i,w}+1)} \\[1em] &=\frac{b_{i,w-2}b_{i,w+1}}{3}-\frac13\frac{b_{i,w-2}b_{i,w}}{b_{i,w-1}+1}\frac{b_{i,w-1}b_{i,w+1}}{b_{i,w}+1}
\end{align*} is an integer divided by $3$, by Proposition \ref{diagonal-divisibility}. \newline 
\indent On the other hand, by equation (\ref{Y-3-cut-holds}) we see $b'_{i,w-2}=b_{i,w-2}-\frac{2(b_{i,w-2}+b_{i,w-3}+1)}{b_{w-1,w-1}+1}$ is an integer divided by $b_{w-1,w-1}+1$. Hence, by the assumption that $b_{w-1,w-1} \not\equiv 2 \mod 3$ then $b'_{i,w-2}$ must be an integer.

Therefore, the proposed array $B'$ is a $\mathbf{Y}$-frieze pattern whose quiddity sequence comes from performing a $\mathbf{Y}$-$3$-cut on the first row of $B$.

\end{proof}

\begin{lemma}
\label{Y-3-lemma}
Let $B$ be a $\mathbf{Y}$-frieze pattern of width $w$, and let $B'$ be the $\mathbf{Y}$-frieze pattern obtained from a $\mathbf{Y}$-3-cut on $B$ with respect to the indexing $$(b_{w-1},b_{w},b_{w+1},b_{w+2}) := (b_{w-1,w-1},b_{w,w},b_{w+1,w+1},b_{w+2,w+2}) = (x,2,2,y)$$ where $x$ or $y$ is not congruent to $2$ modulo $3$.

Furthermore, let $A'$ be a frieze pattern of width $w-3$ such that $p_{w-3}(A') = B'$, and let $A$ be obtained from $A'$ by a corresponding 3-glue. Let $q(A) = (a_1,\ldots, a_{w+3})$ and $q(A') = (a'_1,\ldots, a'_{w})$ be the quiddity cycles of $A$ and $A'$ respectively.
Then $p_w(A) = B$ is equivalent to $b_{w-1} = a'_{w-1}+1$.
\end{lemma}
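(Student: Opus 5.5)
The plan follows the proof of Lemma \ref{Y-2-lemma} closely. There is only one type of 3-glue, so only one condition appears, and it is symmetric in $x$ and $y$. The first step is to write $q(A)$ explicitly in terms of $q(A')$. By Remark \ref{cut polygons} and Figure \ref{fig:n-cut}, the 3-glue reinserting the pentagon at the position of the $\mathbf{Y}$-3-cut gives
\[
q(A)=(a'_1,\ldots,a'_{w-2},\,a'_{w-1}+2,\,1,\,3,\,1,\,a'_w+2).
\]
Since friezes and $\mathbf{Y}$-friezes are determined by their quiddity sequences, $p_w(A)=B$ is equivalent to $b_i=a_ia_{i+1}-1$ for all $1\le i\le w+3$. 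Explicitly, this means:
\begin{itemize}
\item $b_i=a'_ia'_{i+1}-1$ for $1\le i\le w-3$;
\item $b_{w-2}=a'_{w-2}(a'_{w-1}+2)-1$ and $b_{w-1}=a'_{w-1}+1$;
\item $b_w=b_{w+1}=2$;
\item $b_{w+2}=a'_w+1$ and $b_{w+3}=(a'_w+2)a'_1-1$.
\end{itemize}
The conditions $b_w=b_{w+1}=2$ hold automatically from the hypotheses of the $\mathbf{Y}$-3-cut.

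Next I record what $p_{w-3}(A')=B'$ says, using Definition \ref{Y-n-cut}(2) with $j=w$:
\begin{align*}
a'_ia'_{i+1}&=b_i+1 \quad (1\le i\le w-3),\\
a'_{w-2}a'_{w-1}&=\frac{(b_{w-1}-1)(b_{w-2}+1)}{b_{w-1}+1},\\
a'_{w-1}a'_{w}&=(b_{w-1}-1)(b_{w+2}-1),\\
a'_{w}a'_{1}&=\frac{(b_{w+2}-1)(b_{w+3}+1)}{b_{w+2}+1}.
\end{align*}
The first line immediately gives the required identities for $1\le i\le w-3$. The forward implication is trivial, since $b_{w-1}=a'_{w-1}+1$ is one of the conditions in the list above.

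For the converse, assume $b_{w-1}=a'_{w-1}+1$. The middle equation then gives $a'_{w-1}a'_w=a'_{w-1}(b_{w+2}-1)$, hence $b_{w+2}=a'_w+1$. Substituting $b_{w-1}+1=a'_{w-1}+2$ into the second equation and cancelling $a'_{w-1}$ gives $a'_{w-2}(a'_{w-1}+2)=b_{w-2}+1$. Similarly, substituting $b_{w+2}+1=a'_w+2$ into the last equation and cancelling $a'_w$ gives $(a'_w+2)a'_1=b_{w+3}+1$. This verifies every condition in the list, so $p_w(A)=B$.

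The only real obstacle is getting the indexing of the 3-glue correct: placing the new entries $1,3,1$ at positions $w,w+1,w+2$, and adding $+2$ to the neighbours $a'_{w-1}$ and $a'_w$, consistently with the $\mathbf{Y}$-3-cut at $j=w$. I would check this against the commutative diagram following Definition \ref{Y-n-cut}. The cancellations are all by positive integers, so they are legitimate.
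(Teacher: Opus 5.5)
Your proposal is correct and follows essentially the same route as the paper. Both write $q(A)=(a'_1,\ldots,a'_{w-2},a'_{w-1}+2,1,3,1,a'_w+2)$, restate $p_w(A)=B$ and $p_{w-3}(A')=B'$ as systems of product identities, and recover the remaining conditions from $b_{w-1}=a'_{w-1}+1$ by cancelling the positive entries $a'_{w-1}$ and $a'_w$ (your indexing is actually cleaner than the paper's write-up, which has slips such as $a'_{w+1}$ for $a'_w$ and $p_{w-2}$ for $p_{w-3}$).
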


\begin{proof}
Directly from Remark \ref{cut polygons} and Proposition \ref{double cut}, if $A$ is obtained from $A'$ by a 3-glue then $q(A)=(a'_1,a'_2,...,a'_{w-2},a'_{w-1}+2,1,3,1,a'_{w}+2).$ 

As in the proof of Lemma \ref{Y-2-lemma}, writing $q(B) = (b_1,\ldots, b_{w+3})$ then $p_{w}(A) = B$ may be restated as:

\begin{align*}
b_i =
\begin{cases}
        a'_ia'_{i+1} - 1, &1\leq i\leq w-3\\[0.5em]
        a'_{w-2}(a'_{w-1} +2)-1, &i=w-2\\[0.5em]
        a'_{w-1} + 1, &i=w-1\\[0.5em]
        2, &w\leq i\leq w+1\\[0.5em]
        a'_{w} + 1, &i=w+2\\[0.5em]
        (a'_{w}+2)a'_1 -1, &i=w+3
\end{cases}
\end{align*}

Note that $b_{w} = b_{w+1} = 2$ follows immediately from the conditions of a $\mathbf{Y}$-3-cut. On the other hand, by assumption we have $p_{w-2}(A')=B'$ and, directly from the definition of $\mathbf{Y}$-3-cut, this is equivalent to the following:

    \begin{align*}\label{system3}
    a'_{i}a'_{i+1} = \begin{cases}
        \qquad b_i+1, &1\leq i\leq w-3\\[0.5em] 
        \frac{(b_{w-1}-1)(b_{w-2}+1)}{b_{w-1}+1}, &i=w-2\\[0.5em] 
        (b_{w-1}-1)(b_{w+2}-1), &i=w-1\\[0.5em] 
        \frac{(b_{w+2}-1)(b_{w+3}+1)}{b_{w+2}+1}, &i=w
        \end{cases}\tag{**}
    \end{align*}

Therefore, if $b_{w-1}= a'_{w-1}+1$ then considering $i = w-2$, $i=w-1$ and $i=w$ from (\ref{system3}) we respectively obtain: $$b_{w-2} = a'_{w-1}(a'_{w+2}+2) -1, \qquad b_{w+2} = a'_{w+1}+1, \qquad  b_{w+3} = (a'_{w+1}+2)a'_{1} -1.$$

We thus have $p_{w}(A) = B$ and this completes the proof.

\end{proof}

\begin{proposition}\label{3-surjection}
    Let $B$ be a $\mathbf{Y}$-frieze pattern of width $w$, and let $B'$ be the $\mathbf{Y}$-frieze pattern obtained from a $\mathbf{Y}$-3-cut on $B$, as in Proposition \ref{Y-3-cut}. If $B'$ is in the range of $p_{w-3}$, then $B$ is in the range of $p_w$.
\end{proposition}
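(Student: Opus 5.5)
Following the template of Proposition \ref{2-surjection}, I would show that some frieze $A'$ of width $w-3$ satisfies $p_{w-3}(A')=B'$ and $a'_{w-1}=b_{w-1}-1$ (with the indexing of Lemma \ref{Y-3-lemma}, so $(b_{w-1},b_w,b_{w+1},b_{w+2})=(x,2,2,y)$). Then the $3$-glue of $A'$ gives a frieze $A$ with $p_w(A)=B$ by Lemma \ref{Y-3-lemma}. By symmetry (reversing the indexing if necessary) I may assume $x\not\equiv 2 \pmod 3$. By hypothesis there is a frieze $A''$ with $p_{w-3}(A'')=B'$, so the system (\ref{system3}) has a positive integer solution.

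\textbf{Case: $w$ even.} The period $w$ of $A'$ is then even, i.e.\ odd length is excluded. Here Proposition \ref{distinct-implies-odd} says the positive solution of (\ref{system3}) is unique up to the alternating scalar factor, and integrality forces a unique frieze. So I would compute $a'_{w-1}$ directly. Taking alternating products of the equations in (\ref{system3}), as in Case 2 of Proposition \ref{2-surjection}, I get
\[
{a'_{w-1}}^2=(b_{w-1}-1)^2\cdot r ,
\]
where $r$ is an alternating product of the $(b_i+1)$'s, divided by factors such as $(b_{w-1}+1)$ and $(b_{w+2}+1)$. Using $b_w+1=b_{w+1}+1=3$, I would rewrite $r$ as $3^{\pm1}$ times the ratios appearing in Proposition \ref{divisibility}. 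Both $3r$ and $3/r$ are then integers, so $r\in\{\tfrac13,1,3\}$.
\begin{itemize}
\item $r=1$ gives $a'_{w-1}=b_{w-1}-1$, which is what I want.
\item For $r=3$ or $r=\tfrac13$, $a'_{w-1}$ would be irrational, which is impossible.
\end{itemize}
I expect the bookkeeping to show that the values $3^{\pm1}$ correspond exactly to the irrational cases. The parity check that decides which case is which is the delicate point.

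\textbf{Case: $w$ odd.} Now (\ref{system3}) has a one-parameter family of rational solutions related by alternating scalar factors. I would define the candidate $(c_i)$ by setting $c_{w-1}:=b_{w-1}-1$ and propagating backwards through (\ref{system3}). The value $c_w=b_{w+2}-1$ is forced by the $i=w-1$ equation. Each $c_i$ for $i\le w-2$ is an alternating ratio of $(b_j+1)$'s times $(b_{w-1}-1)/(b_{w-1}+1)$. The closing equation $i=w$ must be consistent, and this would follow from Proposition \ref{product} applied to $B'$.

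\textbf{Main obstacle.} The hard step is integrality of $(c_i)$. Unlike the $2$-cut case there is no alternative candidate to switch to, because the requirement $a'_{w-1}=b_{w-1}-1$ pins down the scalar. This is exactly where the hypothesis $x\not\equiv2\pmod 3$ must enter. The plan is as follows.
\begin{itemize}
\item By Corollary \ref{Cuntz-Holm}, the genuine solution $(a''_i)$ satisfies $(c_i):(a''_i)\in\{\tfrac13,\tfrac12,1,2,3\}$, so each $c_i$ has denominator dividing $6$.
\item Proposition \ref{divisibility} gives that each $c_i$ times a suitable power of $(b_{w-1}+1)$ or $3$ is an integer.
\item If the ratio were $2$ or $\tfrac12$, Corollary \ref{Cuntz-Holm} would force a $(1,2)$ pattern, which I would rule out using $b_w=b_{w+1}=2$.
\item For a ratio of $3^{\pm1}$, I would argue as in the integrality step of Proposition \ref{Y-3-cut}. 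Each $c_i$ is simultaneously an integer divided by $3$ and an integer divided by $b_{w-1}+1$. Since $3\nmid b_{w-1}+1$, each $c_i$ is an integer.
\end{itemize}
Then $(c_i)$ is an integer solution of (\ref{system3}) alongside $(a''_i)$. Proposition \ref{ratio-classification} makes it the quiddity sequence of a frieze $A'$, and Lemma \ref{Y-3-lemma} finishes the proof.
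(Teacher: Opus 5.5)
There are two genuine problems. First, your parity cases are swapped. The frieze $A'$ has width $w-3$ and period $w$. When $w$ is even, $w-3$ is odd. That is exactly when Proposition \ref{distinct-implies-odd} \emph{allows} two distinct $\mathbf{Y}$-equivalent friezes (already in width $1$: $(1,2,1,2)$ and $(2,1,2,1)$), so there is no uniqueness. Moreover, for even period the alternating product of the equations in (\ref{system3}) telescopes to $1$; this is Proposition \ref{product} applied to $B'$. So it cannot isolate ${a'_{w-1}}^2$. Your square-root argument ($r\in\{\tfrac13,1,3\}$, with only $r=1$ giving a rational value) is correct and is the paper's argument, but it applies only to $w$ odd. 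There the period is odd and the positive solution of (\ref{system3}) is unique. The one-parameter family and the back-propagated candidate $(c_i)$ belong to $w$ even.

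Second, and more seriously, your integrality plan for $(c_i)$ does not work. Corollary \ref{Cuntz-Holm} assumes the second sequence is already a positive integer sequence. For rational sequences every scalar $q$ gives a solution, so deducing that the denominators divide $6$ is circular. Excluding the ratio $2^{\pm1}$ via $b_w=b_{w+1}=2$ is unsupported, since those entries disappear under the cut and $B'$ may contain $1$'s. The claim that each $c_i$ is both an integer over $3$ and an integer over $b_{w-1}+1$ is never established.

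The missing observation is that no such argument is needed. The hypothesis $x\not\equiv 2 \pmod 3$ was already used in Proposition \ref{Y-3-cut} to make $B'$ integral and plays no role here. Your description of the $c_i$ is also off: the factor $b_{w-1}-1$ cancels at once, giving $c_{w-2}=\frac{b_{w-2}+1}{b_{w-1}+1}$.

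For $w$ even, the odd-indexed terms ($i\le w-3$) are $c_i=\frac{(b_i+1)(b_{i+2}+1)\cdots(b_{w-1}+1)}{(b_{i+1}+1)\cdots(b_{w-2}+1)}$, which are integers by Proposition \ref{divisibility}. For the even-indexed terms, Proposition \ref{product} applied to $B'$, together with (\ref{system3}), gives
\[
\frac{(b_1+1)(b_3+1)\cdots(b_{w-1}+1)}{(b_2+1)(b_4+1)\cdots(b_{w-2}+1)}=\frac{b_{w+3}+1}{b_{w+2}+1}.
\]
This rewrites each even-indexed term as $c_i=\frac{(b_{w+2}+1)(b_1+1)(b_3+1)\cdots(b_{i-1}+1)}{(b_{w+3}+1)(b_2+1)\cdots(b_{i-2}+1)}$. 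Read cyclically, this is again of the form in Proposition \ref{divisibility}, hence an integer. This is how the paper obtains an integer solution with $a'_{w-1}=b_{w-1}-1$ directly. Proposition \ref{ratio-classification} and Lemma \ref{Y-3-lemma} then finish the argument as you intended.
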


\begin{proof}

To prove the proposition, we will show there is a frieze pattern $A'$ of width $w-3$ such that: $$ p_{w-2}(A')=B' \qquad \text{and} \qquad p_{w}(A)=B$$
\noindent where $A$ is a frieze pattern obtained by performing a $3$-glue on $A'$. Directly from Lemma \ref{Y-3-lemma}, in order to show this, it suffices to find a frieze pattern $A'$ such that $p_{w-3}(A') = B'$ and $b_{w-1} = a'_{w-1}+1$.

We shall divide into two cases by the parity of $w$.

\textbf{Case 1}: $w$ is even.

We first set $a'_{w-1}:=b_{w-1}-1$ and use this and (\ref{system3}) to iteratively define $a'_i$ for all odd $i$. Similarly, we use $a'_{w}:=b_{w+2}-1$ and (\ref{system3}) to iteratively define $a'_i$ for all even $i$. Doing so we obtain:

\begin{equation}
\label{Y-3-case1-frieze}
a'_i :=
\begin{cases}
\dfrac{(b_i+1)(b_{i+2}+1)\cdots(b_{w-1}+1)}
{(b_{i+1}+1)(b_{i+3}+1)\cdots(b_{w-2}+1)},
& \text{if } 1\le i\le w-3 \text{ and } i \text{ is odd},\\[3ex]
\dfrac{(b_{w+2}+1)(b_1+1)(b_{3}+1)\cdots(b_{i-1}+1)}
{(b_{w+3}+1)(b_{2}+1)(b_{4}+1)\cdots(b_{i-2}+1)},
& \text{if } 2\le i\le w-2 \text{ and } i \text{ is even},
\end{cases}
\end{equation}

Note that, by Proposition \ref{divisibility}, $a_i \in \mathbb{Z}$ for all $1\leq i\leq w$. Moreover, $a'_ia'_{i+1}$ satisfies (\ref{system3}) for all $1\leq i\leq w$. Indeed, the case for $i=w-1$ follows immediately from the definition of $a'_{w-1}$ and $a'_{w}$. Furthermore, by the assumption that $p_{w-3}(A') = B'$ for some frieze $A'$, we may apply Proposition \ref{product} to $B'$ and use (\ref{system3}) to get the equality:

\begin{align}
\label{Y-3-equality}
  &\frac{(b_{1}+1)(b_{3}+1)\cdots (b_{w-1}+1)}{(b_{2}+1)(b_{4}+1)\cdots (b_{w-2}+1)} = \frac{b_{w+3}+1}{b_{w+2}+1}.   
\end{align}

Hence, for any $1 \leq i \leq w-3$, using (\ref{Y-3-case1-frieze}) and factoring out $\frac{(b_i+1)(b_{w+2}+1)}{(b_{w+3}+1)}$ we see that (\ref{Y-3-equality}) yields: $$a'_ia'_{i+1} = \frac{(b_i+1)(b_{w+2}+1)}{(b_{w+3}+1)}\left(\frac{(b_{1}+1)(b_{3}+1)\cdots (b_{w-1}+1)}{(b_{2}+1)(b_{4}+1)\cdots (b_{w-2}+1)}\right) = (b_i +1).$$

Similarly, again by (\ref{Y-3-equality}), we have: \begin{align*}
    a'_{w-2}a'_{w-1} &= \frac{(b_{w+2}+1)}{(b_{w+3}+1)}\left(\frac{(b_{1}+1)(b_{3}+1)\cdots (b_{w-3}+1)}{(b_{2}+1)(b_{4}+1)\cdots (b_{w-4}+1)}\right)\cdot (b_{w-1}+1) \\ &= \frac{(b_{w+2}+1)(b_{w-2}+1)(b_{w+3}+1)(b_{w-1}-1)}{(b_{w+3}+1)(b_{w-1}+1)(b_{w+2}+1)} = \frac{(b_{w-2}+1)(b_{w-1}-1)}{b_{w-1}+1}
\end{align*}

and 

\begin{align*}
    a'_{w}a'_{1} = (b_{w+2}-1)\cdot \left(\frac{(b_{1}+1)(b_{3}+1)\cdots (b_{w-1}+1)}{(b_{2}+1)(b_{4}+1)\cdots (b_{w-2}+1)}\right) = \frac{(b_{w+2}-1)(b_{w+3}+1)}{b_{w+2}+1}.
\end{align*}

Therefore, similar to the proof of Proposition \ref{2-surjection}, since there exists $(a''_i)_{i\in\mathbb{Z}}$ that also satisfies equations (\ref{system3}), then our choice of $(a'_i)_{i\in\mathbb{Z}}$ is a frieze pattern by Proposition \ref{ratio-classification}. This completes the proof of case 1.

    \textbf{Case 2}: $w$ is odd.

     By assumption, the system of equations (\ref{system3}) has a positive integer solution $(a'_i)_{i\in\mathbb{Z}}$. Moreover, as in the proof of Proposition \ref{2-surjection}, since $w$ is odd this solution must be unique. 
     
Considering ${a'_{w-1}}^2=\frac{(a'_1a'_2)\cdots (a'_{w-2}a'_{w-1})(a'_{w-1}a'_{w})}{(a'_{w}a'_1)(a'_{w}a'_1)(a'_2a'_3)\cdots (a'_{w-3}a'_{w-2})}$ and using the equations (\ref{system3}) we get:

\[{a'_{w-1}}^2=\frac{(b_{w+2}+1)(b_1+1)(b_3+1)\cdots(b_{w-2}+1)(b_{w-1}-1)^2}{(b_{w+3}+1)(b_2+1)(b_4+1)\cdots(b_{w-3}+1)(b_{w-1}+1)}\]

Consider $q=\frac{(b_{w+2}+1)(b_1+1)(b_3+1)\cdots(b_{w-2}+1)}{(b_{w+3}+1)(b_2+1)(b_4+1)\cdots(b_{w-3}+1)(b_{w-1}+1)}$. Since $b_{w}+1=3$ then $$3q=\frac{(b_{w+2}+1)(b_1+1)(b_3+1)\cdots(b_{w-2}+1)(b_w+1)}{(b_{w+3}+1)(b_2+1)(b_4+1)\cdots(b_{w-3}+1)(b_{w-1}+1)}$$ is a positive integer by Proposition \ref{divisibility}. On the other hand, since $b_{w+1}+1=3$ then $$\frac{3}{q}=\frac{(b_{w+1}+1)(b_{w+3}+1)(b_2+1)(b_4+1)\cdots(b_{w-3}+1)(b_{w-1}+1)}{(b_{w+2}+1)(b_1+1)(b_3+1)\cdots(b_{w-2}+1)}$$ is also a positive integer by Proposition \ref{divisibility}. Hence $q\in \{ \frac{1}{3}, 1, 3\}$.

If $q=1$, then $a'_{w-1}=b_{w-1}-1$. If $q=\frac{1}{3}$ or $3$ then $a'_{w-1}=\frac{b_{w-1}-1}{\sqrt{3}}$ or $\sqrt{3}(b_{w-1}-1)$, respectively, which is absurd in either scenario since $a'_{w-1}$ is a frieze pattern entry and hence must be an integer.

In summary, for our frieze pattern $A'$ satisfying $p_{w-2}(A') = B'$ we have $a_{w-1}=b_{w-1}-1$. Therefore, Lemma \ref{Y-3-lemma} guarantees $p_w(A) = B$, and the proof of case 2 is complete.
    
\end{proof}

\begin{proposition}\label{Y-4-cut}
    Let $B = (b_{i,j})$ be a $\mathbf{Y}$-frieze pattern of width $w\geq 4$.
    
    If the first row of $B$ contains the subsequence $(2,2,2)$, then by performing a $\mathbf{Y}$-$4$-cut on the first row, we obtain the quiddity sequence of a $\mathbf{Y}$-frieze pattern $B' = (b'_{i,j})$ of width $w-4$.
\end{proposition}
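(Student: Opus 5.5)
We follow the template of Propositions \ref{Y-2-cut} and \ref{Y-3-cut}. Without loss of generality index so that $b_{w-1,w-1}=b_{w,w}=b_{w+1,w+1}=2$; these are the three $2$'s, with $x=b_{w-2,w-2}$ and $y=b_{w+2,w+2}$ as neighbours. By glide symmetry it suffices to define $b'_{i,j}$ for $1\le i\le j\le w-3$. We set $b'_{i,j}:=b_{i,j}$ for $j\le w-4$. For the new last diagonal we take
\[
b'_{i,w-3}:=\lambda_i\, b_{i,w-3}\,b_{i,w+1},
\]
where $\lambda_i$ is a rational correction factor built from $b_{i,w-2},b_{i,w-1},b_{i,w}$. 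We find $\lambda_i$ by reverse engineering. Apply Proposition \ref{y-diagonals} successively along the diagonal $f_n:=b_{i,n}$ with $b_{w-1}=b_w=b_{w+1}=2$, which gives $b_{i,w-2}$, $b_{i,w-1}$, $b_{i,w}$ and $b_{i,w+1}$ as rational functions of $b_{i,w-3}$ and $b_{i,w-4}$. Then solve for the expression $F_i$ satisfying equation (\ref{y-diagonals2}) with predecessors $b_{i,w-4}, b_{i,w-3}$ and new quiddity entry $b'_{w-3}=b_{w-3}-\tfrac{3(b_{w-3}+1)}{b_{w-2}+1}$. Analogy with the $2$- and $3$-cases (denominators $2$ and $3$) suggests a closed form involving the factor $\tfrac{1}{4}$, or equivalently a combination like $b_{i,w-3}b_{i,w+1}/\bigl(\tfrac{(b_{i,w-2}+1)(b_{i,w}+1)}{\ldots}\bigr)$.

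First we check that the quiddity row of $B'$ is the $\mathbf{Y}$-$4$-cut of that of $B$. For the entry $b'_{w-3,w-3}$ we repeat the computation leading to (\ref{Y-3-cut-holds}): use Proposition \ref{y-diagonals} four times with the three $2$'s, together with $b_{w-3,w-2}=b_{w-3,w-3}b_{w-2,w-2}-1$, to get $b_{w-3}-\tfrac{3(b_{w-3}+1)}{b_{w-2}+1}$. The symmetric entry follows in the same way. The middle entry $b'_{w-2,w-2}=b_{1,w-4}=b_{w-2,w+2}$ is the bottom of a triangle with top row $(x,2,2,2,y)$. Filling it in by the $\mathbf{Y}$-diamond rule gives second row $2x-1,3,3,2y-1$, third row $\tfrac{(2x-1)\cdot 3}{1}/\ldots$, and so on, and should yield $\tfrac{(x-2)(y-2)}{3}-1$.

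Next we verify the $\mathbf{Y}$-diamond rule on the two types of diamonds containing $b'_{i,w-3}$. As before, this reduces to showing that $b'_{i,w-3}$ satisfies (\ref{y-diagonals2}). We do this by a chain of substitutions using Proposition \ref{y-diagonals} and Corollary \ref{reduction} along the diagonal, exactly mirroring the displayed computation in Proposition \ref{Y-3-cut}. Since the lower entry of a diamond is uniquely determined, this suffices. The second type of diamond follows by glide symmetry.

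The main obstacle is integrality of $b'_{i,w-3}$; positivity is clear from its product form. Unlike the $3$-cut, there is no congruence hypothesis on the neighbours, so the argument must exploit that there are three consecutive $2$'s. The plan is to write $b'_{i,w-3}$ in two ways. The first way is $b_{i,w-3}-\tfrac{3(b_{i,w-3}+b_{i,w-4}+1)}{b_{w-2}+1}$, via the analogue of (\ref{Y-3-cut-holds}) and Corollary \ref{reduction}. The second is an expression in terms of integers of the form $\tfrac{f_{n-1}f_{n+1}}{f_n+1}$ from Proposition \ref{diagonal-divisibility}, divided by a small constant, expected to be $3$ (or a power of $2$ or $3$). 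We also apply the symmetric version of the first expression from the $y$ side, whose denominator is $b_{w+2}+1$. If these denominators cannot be shown coprime in general, we fall back on Proposition \ref{divisibility}. That proposition shows $\tfrac{(b_{w-2}+1)(b_w+1)}{b_{w-1}+1}=b_{w-2}+1$ and $\tfrac{(b_{w-1}+1)(b_{w+1}+1)}{b_w+1}=3$ are integral, and, over longer alternating windows straddling $(2,2,2)$, it constrains $x+1$ and $y+1$ modulo $3$. The goal is to pin down the denominator as dividing $3$ and then rule out a residual factor of $3$ using the three-$2$ identity for the middle entry, namely $3\mid(x-2)(y-2)$. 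This last step is where we expect to spend the effort.
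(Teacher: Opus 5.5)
Your skeleton matches the paper's proof: the same normalisation $b_{w-1,w-1}=b_{w,w}=b_{w+1,w+1}=2$, inherited entries for $j\le w-4$, a quiddity check via Proposition~\ref{y-diagonals} together with the triangle with top row $(x,2,2,2,y)$, and the diamond rule via equation~(\ref{y-diagonals2}). However, two things are missing.

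First, you never pin down the new diagonal, and the guessed factor $\tfrac14$ is wrong. The correct choice is $\lambda_i=\frac{b_{i,w-1}}{(b_{i,w-2}+1)(b_{i,w}+1)}$, that is,
\[
b'_{i,w-3}=\frac{b_{i,w-3}\,b_{i,w-1}\,b_{i,w+1}}{(b_{i,w-2}+1)(b_{i,w}+1)}.
\]
Your positivity claim and the substitution chain for (\ref{y-diagonals2}) both presuppose an explicit formula of this kind. An entry defined only implicitly as the solution of (\ref{y-diagonals2}) is neither visibly positive nor visibly integral.

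Second, and this is the real gap, the integrality plan cannot work as sketched. You expect a residual denominator $3$ and hope to clear it by coprimality with $b_{w-2}+1$ or $b_{w+2}+1$, or via $3\mid(x-2)(y-2)$. With no congruence hypothesis, though, both neighbours can be $\equiv 2 \pmod 3$. Take $q(B)=(2,2,2,5,5,2,2,2,5,5)$, which has width $7$ and is the image of $q(A)=(3,1,3,1,6,1,3,1,3,2)$. Here $x=y=5$, so $3$ divides both $x+1$ and $y+1$. Moreover $3\mid(x-2)(y-2)$ is fully compatible with this, so it cannot remove a factor of $3$; this is exactly why the $3$-cut needed its hypothesis.

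The missing idea is that three consecutive $2$'s give an exact identity with \emph{no} denominator. Apply Corollary~\ref{reduction} at $b_{i,w-1}$ and at $b_{i,w}$, and Proposition~\ref{y-diagonals} at $b_{i,w+1}$. One gets
\[
b'_{i,w-3}=\frac{2\,b_{i,w-2}\,b_{i,w}}{b_{i,w-1}+1}-b_{i,w-2}-\frac{b_{i,w-1}\,b_{i,w+1}}{b_{i,w}+1},
\]
and both quotients are integers by Proposition~\ref{diagonal-divisibility}. In the example above, with $i=2$, this reads $36-27-7=2=b'_{2,4}$. The paper's display has a $3$ where the coefficient should be $2$, since $b_{w+1,w+1}=2$, but integrality is unaffected either way. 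No appeal to Proposition~\ref{divisibility} or to the residues of $x,y$ is needed.
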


\begin{proof}

 We prove this by constructing staggered rows of positive integers enclosed by rows of $0$s, showing that the first row comes from the $\mathbf{Y}$-$4$-cut and that the $\mathbf{Y}$-diamond rule holds.

Without loss of generality, assume $b_{w-1,w-1}=b_{w,w}=b_{w+1,w+1}=2$.

Due to glide symmetry, to describe $B'$ it suffices to specify the entries $b'_{i,j}$ for $1 \leq i \leq j \leq w-3$, and we do so as follows:
\begin{align}
    b'_{i,j} &:= b_{i,j} \hspace{-20mm}&\text{for}& \ \ 1 \leq i \leq j \leq w-4. \label{Y4cut-inherit}
    \\[1em]  
    b'_{i,w-3} &:= \frac{b_{i,w-3}b_{i,w-1}b_{i,w+1}}{(b_{i,w-2}+1)(b_{i,w}+1)} \hspace{-20mm}& \text{for}& \ \ 1 \leq i \leq w-3. \label{Y4cut-new}
\end{align}

As in the proofs of Proposition \ref{Y-2-cut} and \ref{Y-3-cut}, we begin by showing that the quiddity sequence of $B'$ arises from a $\mathbf{Y}$-$4$-cut on the quiddity sequence of $B$. This amounts to showing the following equality holds:

\begin{align*}
& \Big(b'_{w-3,w-3}, \ b'_{w-2,w-2}, \ b'_{w-1,w-1}\Big) \\
:=& \left(\frac{b_{w-3,w-3}b_{w-3,w-1}b_{w-3,w+1}}{(b_{w-3,w-2}+1)(b_{w-3,w}+1)}, \ b_{1,w-4}, \ \frac{b_{2,w-3}b_{2,w-1}b_{2,w+1}}{(b_{2,w-2}+1)(b_{2,w}+1)}\right) \\
= &\left(b_{{w-3,w-3}}-\frac{3(b_{w-3,w-3}+1)}{b_{w-2,w-2}+1},\frac{(b_{w-2}-2)(b_{w+2}-2)}{3}-1,b_{w+3,w+3}-\frac{3(b_{w+3,w+3}+1)}{b_{w+2,w+2}+1}\right).
\end{align*}

Firstly, applying Proposition \ref{y-diagonals} to $f_n:= b_{w-3,w-1}$ and $f_n:= b_{w-3,w+1}$, and using $b_{w-1,w-1}=b_{w+1,w+1} = 2$ we have: \begin{align}
\label{Y-4-quiddity-equality}
& b'_{w-3,w-3} := \frac{b_{w-3,w-3}b_{w-3,w-1}b_{w-3,w+1}}{(b_{w-3,w-2}+1)(b_{w-3,w}+1)} \nonumber \\[1ex] &= \frac{b_{w-3,w-3}\left(\frac{(b_{w-1,w-1}b_{w-3,w-2} - (b_{w-3,w-3}+1))(b_{w-3,w-2}+1)}{b_{w-3,w-2}+b_{w-3,w-3}+1} \right)\left(\frac{(b_{w+1,w+1}b_{w-3,w} - (b_{w-3,w-1}+1))(b_{w-3,w}+1)}{b_{w-3,w}+b_{w-3,w-1}+1} \right)}{(b_{w-3,w-2}+1)(b_{w-3,w}+1)}
\nonumber \\[1ex] &= b_{w-3,w-3}\frac{(2b_{w-3,w-2} - (b_{w-3,w-3}+1))}{(b_{w-3,w-2}+b_{w-3,w-3}+1)}\frac{(2b_{w-3,w} - (b_{w-3,w-1}+1))}{(b_{w-3,w}+b_{w-3,w-1}+1)}.
\end{align}

Furthermore, note that: \begin{alignat*}{2}
&
\quad \frac{2b_{w-3,w} - (b_{w-3,w-1}+1)}{(b_{w-3,w}+b_{w-3,w-1}+1)}
&&
\\[1ex]
&=
\frac{2\left(\frac{(b_{w,w}b_{w-3,w-1} - (b_{w-3,w-2}+1))(b_{w-3,w-1}+1)}{b_{w-3,w-1}+b_{w-3,w-2}+1}\right) - (b_{w-3,w-1}+1)}{\left(\frac{(b_{w,w}b_{w-3,w-1} - (b_{w-3,w-2}+1))(b_{w-3,w-1}+1)}{b_{w-3,w-1}+b_{w-3,w-2}+1}\right)+(b_{w-3,w-1}+1)}
&&
\qquad \qquad \parbox[t]{7cm}{\raggedright
by Proposition \ref{y-diagonals}\\
applied to $f_n:=b_{w-3,w}$}
\\[1ex]
&=
\frac{2\left(2b_{w-3,w-1} - (b_{w-3,w-2}+1)\right) - (b_{w-3,w-1}+b_{w-3,w-2}+1)}{\left(2b_{w-3,w-1} - (b_{w-3,w-2}+1)\right)+(b_{w-3,w-1}+b_{w-3,w-2}+1)}
&&
\qquad \qquad \parbox[t]{7cm}{\raggedright
by $b_{w,w}=2$}
\\[1ex]
&=
\frac{b_{w-3,w-1}-(b_{w-3,w-2}+1)}{b_{w-3,w-1}}.
\end{alignat*}

Hence, employing Proposition \ref{y-diagonals}
applied to $f_n:=b_{w-3,w-1}$ and using $b_{w-1,w-1} = 2$, we see that equality (\ref{Y-4-quiddity-equality}) may be rewritten as: \begin{alignat}{2}
&
\quad b'_{w-3,w-3} = b_{w-3,w-3}\frac{(2b_{w-3,w-2} - (b_{w-3,w-3}+1))}{(b_{w-3,w-2}+b_{w-3,w-3}+1)}\frac{b_{w-3,w-1}-(b_{w-3,w-2}+1)}{b_{w-3,w-1}}
&&
\nonumber \\[1ex]
&=
b_{w-3,w-3}\frac{(2b_{w-3,w-2} - (b_{w-3,w-3}+1))}{(b_{w-3,w-2}+b_{w-3,w-3}+1)}\frac{\left(\frac{(b_{w-1,w-1}b_{w-3,w-2} - (b_{w-3,w-3}+1))(b_{w-3,w-2}+1)}{b_{w-3,w-2}+b_{w-3,w-3}+1} \right)-(b_{w-3,w-2}+1)}{\left(\frac{(b_{w-1,w-1}b_{w-3,w-2} - (b_{w-3,w-3}+1))(b_{w-3,w-2}+1)}{b_{w-3,w-2}+b_{w-3,w-3}+1} \right)}
&&
\nonumber \\[1ex]
&=
b_{w-3,w-3}\frac{(2b_{w-3,w-2} - (b_{w-3,w-3}+1))}{(b_{w-3,w-2}+b_{w-3,w-3}+1)}\frac{\left((2b_{w-3,w-2} - (b_{w-3,w-3}+1)) \right)-(b_{w-3,w-2}+b_{w-3,w-3}+1)}{2b_{w-3,w-2} - (b_{w-3,w-3}+1)}
&&
\nonumber \\[1ex]
&=
b_{w-3,w-3}\frac{\left(2b_{w-3,w-2} - (b_{w-3,w-3}+1) \right)-(b_{w-3,w-2}+b_{w-3,w-3}+1)}{b_{w-3,w-2}+b_{w-3,w-3}+1}
&&
\nonumber \\[1ex]
&=
b_{w-3,w-3} - \frac{3b_{w-3,w-3}(b_{w-3,w-3}+1)}{b_{w-3,w-2}+b_{w-3,w-3}+1}
&&
\nonumber \\[1ex]
&=
b_{w-3,w-3} - \frac{3(b_{w-3,w-3}+1)}{b_{w-2,w-2}+1}
&&
\label{Y-4-cut-holds}
\end{alignat}

\noindent where the final equality follows from the $\mathbf{Y}$-diamond rule: $(0+1)(b_{w-3,w-2}+1) = b_{w-3,w-3}b_{w-2,w-2}$.

Analogously, we also have $\frac{b_{2,w-3}b_{2,w-1}b_{2,w+1}}{(b_{2,w-2}+1)(b_{2,w}+1)} = b_{w+3,w+3}-\frac{3(b_{w+3,w+3}+1)}{b_{w+2,w+2}+1}$. 

Finally, observe that $b_{w-2,w+2} = b_{1,w-4}$ (due to glide symmetry) belongs to the following triangle of entries in $B$:

\[
\begin{matrix}
0 && 0 && 0 && 0 && 0 && 0\\
& b_{w-2,w-2} && 2 && 2 && 2 && b_{w+2,w+2} &\\
&& b_{w-2,w-1}&&b_{w-1,w}&& b_{w,w+1} && b_{w+1,w+2}&&\\
&&& b_{w-2,w} && b_{w-1,w+1} && b_{w,w+2} &&&\\
&&&& b_{w-2,w+1} && b_{w-1,w+2}  &&&&\\
&&&&& b_{1,w-4} &&&&&
\end{matrix}
\]

Using the $\mathbf{Y}$-diamond rule to complete the entries of this triangle from the top down, we obtain: \begin{equation}
\label{Y-4-integral-diamond}
b_{1,w-4} = \frac{(b_{w-2,w-2}-2)(b_{w+2,w+2}-2)}{3}-1.
\end{equation}


Therefore, as desired, the first row of $B'$ arises from a $\mathbf{Y}$-$4$-cut on the first row of $B$.

To prove the $\mathbf{Y}$-diamond rule holds everywhere in $B'$ it suffices to show that the rule holds on each diamond that involves the entry $b'_{i,w-3} := \frac{b_{i,w-3}b_{i,w-1}b_{i,w+1}}{(b_{i,w-2}+1)(b_{i,w}+1)}$ for some $i \in \{1,\ldots, w-3 \}$, since these are the only entries not in $B$. For each $i \in \{1,\ldots, w-3\}$ there are two such types of diamonds:

\[
\begin{matrix}
    &&b_{i+1,w-4}&&\\
    &b_{i,w-4}&& b'_{i+1,w-3}\\
    &&b'_{i,w-3}&&
\end{matrix}
\qquad \text{and} \ \qquad
\begin{matrix}
    &&b'_{i+1,w-3}&&\\
    &b'_{i,w-3}&&b_{1,i-1}\\
    &&b_{1,i-2}
\end{matrix}
\]

As in Proposition \ref{Y-2-cut} and \ref{Y-2-cut}, to show the $\mathbf{Y}$-diamond rule holds on the left diamond, one may equivalently show that the entries in $B'$ satisfy equation (\ref{y-diagonals2}) for $f_n := \frac{b_{i,w-3}b_{i,w-1}b_{i,w+1}}{(b_{i,w-2}+1)(b_{i,w}+1)}$. We now verify these entries do indeed satisfy equation (\ref{y-diagonals2}):

\begin{alignat}{2}
&
\frac{\left(\left(\frac{b_{w-3,w-3}b_{w-3,w-1}b_{w-3,w+1}}{(b_{w-3,w-2}+1)(b_{w-3,w-2}+1)}\right)b_{i,w-4}-(b_{i,w-5}+1)\right)
(b_{i,w-4}+1)}
{b_{i,w-4}+b_{i,w-5}+1}
&&
\nonumber\\[1ex]
&=
\frac{\left(\left(b_{w-3,w-3} - \frac{3(b_{w-3,w-3}+1)}{b_{w-2,w-2}+1}\right)b_{i,w-4}-(b_{i,w-5}+1)\right)
(b_{i,w-4}+1)}
{b_{i,w-4}+b_{i,w-5}+1}
&&
\qquad \text{by equality (\ref{Y-4-cut-holds})}
\nonumber\\[1ex]
&=
b_{i,w-3}
-\frac{3(b_{w-3,w-3}+1)b_{i,w-4}(b_{i,w-4}+1)}
{(b_{w-2,w-2}+1)(b_{i,w-4}+b_{i,w-5}+1)}
&&
\qquad \parbox[t]{7cm}{\raggedright
by Proposition \ref{y-diagonals}\\
applied to $f_n:=b_{i,w-3}$}
\nonumber \\[1ex]
&=
b_{i,w-3}
-\frac{3(b_{i,w-3}+ b_{i,w-4}+1)}
{b_{w-2,w-2}+1}
&&
\qquad \parbox[t]{7cm}{\raggedright
by Corollary \ref{reduction}\\
applied to $f_n:=b_{i,w-3}$}
\nonumber\\[1ex]
&=
b_{i,w-3}
-\frac{3b_{i,w-3}(b_{i,w-3}+1)}
{b_{i,w-2}+ b_{i,w-3}+1}
&&
\qquad \parbox[t]{7cm}{\raggedright
by Corollary \ref{reduction}\\
applied to $f_n:=b_{i,w-2}$}
\nonumber\\[1ex]
&=
\frac{b_{i,w-3}(b_{i,w-2} - 2(b_{i,w-3}+1))}
{b_{i,w-2}+ b_{i,w-3}+1}
&&
\nonumber \\[1ex]
&=
\frac{b_{i,w-3}b_{i,w-1}(b_{i,w-2} - 2(b_{i,w-3}+1))}
{(2b_{i,w-2}- (b_{i,w-3}+1))(b_{i,w-2}+1)}
&&
\hspace{-1.8cm}\parbox[t]{7cm}{\vspace{-0.7cm}\raggedright
by $b_{w-1,w-1}=2$ and Proposition \ref{y-diagonals} applied to $f_n:=b_{i,w-1}$.}
\label{Y-4-frieze-rule-holds}
\intertext{On the other hand, note that we have:}
&
\frac{b_{i,w+1}}{b_{i,w}+1} =
\frac{2b_{i,w}-(b_{i,w-1}+1)}{b_{i,w}+b_{i,w-1}+1}
&&
\hspace{-1.8cm}\parbox[t]{7cm}{\vspace{-0.7cm}\raggedright
by $b_{w-1,w-1}=2$ and Proposition \ref{y-diagonals} applied to $f_n:=b_{i,w+1}$.}
\nonumber \\[1ex]
&= \frac{2\left(\frac{(2b_{i,w-1} - (b_{i,w-2}+1))(b_{i,w-1}+1)}{b_{i,w-1}+b_{i,w-2}+1}\right)-(b_{i,w-1}+1)}{\left(\frac{(2b_{i,w-1} - (b_{i,w-2}+1))(b_{i,w-1}+1)}{b_{i,w-1}+b_{i,w-2}+1}\right)+(b_{i,w-1}+1)}
&&
\hspace{-1.8cm}\parbox[t]{6.5cm}{\vspace{-0.5cm}\raggedright
by $b_{w,w}=2$ and Proposition \ref{y-diagonals} applied to $f_n:=b_{i,w}$.}
\nonumber \\[1ex]
&= \frac{b_{i,w-1}-(b_{i,w-2}+1)}{b_{i,w-1}}
&&
\nonumber \\[1ex]
&= \frac{\left(\frac{(2b_{i,w-2} - (b_{i,w-3}+1))(b_{i,w-2}+1)}{b_{i,w-2}+b_{i,w-3}+1}\right)-(b_{i,w-2}+1)}{\left(\frac{(2b_{i,w-2} - (b_{i,w-3}+1))(b_{i,w-2}+1)}{b_{i,w-2}+b_{i,w-3}+1}\right)}
&&
\hspace{-1.8cm}\parbox[t]{7cm}{\vspace{-0.7cm}\raggedright
by $b_{w-1,w-1}=2$ and Proposition \ref{y-diagonals} applied to $f_n:=b_{i,w-1}$.}
\nonumber \\[1ex]
&= \frac{b_{i,w-2} - 2(b_{i,w-3}+1)}
{2b_{i,w-2}- (b_{i,w-3}+1)}
&&
\nonumber 
\end{alignat}

Hence, as desired, equality (\ref{Y-4-frieze-rule-holds}) may equivalently be written as $$\frac{\left(\left(\frac{b_{w-3,w-3}b_{w-3,w-1}b_{w-3,w+1}}{(b_{w-3,w-2}+1)(b_{w-3,w-2}+1)}\right)b_{i,w-4}-(b_{i,w-5}+1)\right)
(b_{i,w-4}+1)}
{b_{i,w-4}+b_{i,w-5}+1} = \frac{b_{i,w-3}b_{i,w-1}b_{i,w+1}}{(b_{i,w-2}+1)(b_{i,w}+1)}.$$
The $\mathbf{Y}$-diamond rule holds for the second type of diamond analogously.

Clearly $b'_{i,w-3}:=\frac{b_{i,w-3}b_{i,w-1}b_{i,w+1}}{(b_{i,w-2}+1)(b_{i,w}+1)}$ is (strictly) positive for all $2 \leq i \leq w-3$. To argue integrality, we first note that by Proposition \ref{diagonal-divisibility} we have $\frac{b_{i,w+1}b_{i,w-1}}{b_{i,w}+1}, \frac{b_{i,w}b_{i,w-2}}{b_{i,w-1}+1} \in \mathbb{Z}$.

The integrality of $b'_{i,w-3}$ is then a consequence of the following equalities:

\begin{alignat*}{2}
b'_{i,w-3} &+ \frac{b_{i,w+1}b_{i,w-1}}{b_{i,w}+1}
=
\frac{b_{i,w+1}b_{i,w-1}(b_{i,w-2}+b_{i,w-3}+1)}{(b_{i,w-2}+1)(b_{i,w}+1)}
&&
\\[1ex]
&=
\frac{3b_{i,w+1}b_{i,w-1}b_{i,w-2}}{(b_{i,w}+1)(b_{i,w-1}+b_{i,w-2}+1)}
&&
\qquad \parbox[t]{7cm}{\raggedright
by Corollary \ref{reduction}\\
applied to $f_n:=b_{i,w-1}$}
\\[1ex]
&=
\frac{3b_{i,w-1}b_{i,w-2}(3b_{i,w}-(b_{i,w-1}+1))}{(b_{i,w}+b_{i,w-1}+1)(b_{i,w-1}+b_{i,w-2}+1)}
&&
\qquad \parbox[t]{7cm}{\raggedright
by Proposition \ref{y-diagonals}\\
applied to $f_n:=b_{i,w+1}$}
\\[1ex]
&=
\frac{3b_{i,w-2}(3b_{i,w}-(b_{i,w-1}+1))}{3(b_{i,w-1} + 1)}
&&
\qquad \parbox[t]{7cm}{\raggedright
by Corollary \ref{reduction}\\
applied to $f_n:=b_{i,w-1}$}
\\[1ex]
&=
\frac{3b_{i,w-2}b_{i,w}}{b_{i,w-1} + 1}- b_{i, w-2}.
\end{alignat*}

\end{proof}

\begin{lemma}
\label{Y-4-lemma}
Let $B$ be a $\mathbf{Y}$-frieze pattern of width $w$, and let $B'$ be the $\mathbf{Y}$-frieze pattern obtained from a $\mathbf{Y}$-4-cut on $B$ with respect to the indexing $$(b_{w-1},b_{w},b_{w+1}) := (b_{w-1,w-1},b_{w,w},b_{w+1,w+1}) = (2,2,2).$$

Furthermore, let $A'$ be a frieze pattern of width $w-4$ such that $p_{w-4}(A') = B'$, and let $A$ be obtained from $A'$ by a corresponding 4-glue. Let $q(A) = (a_1,\ldots, a_{w+3})$ and $q(A') = (a'_1,\ldots, a'_{w-1})$ be the quiddity cycles of $A$ and $A'$ respectively.
Then the following holds: 
\begin{enumerate}[label=\normalfont(\roman*)]
\item if $A$ is obtained from $A'$ by a 4(a)-glue then $p_w(A) = B$ is equivalent to $b_{w-2} = a'_{w-2}+2$,
\item if $A$ is obtained from $A'$ by a 4(b)-glue then $p_w(A) = B$ is equivalent to $b_{w+2} = a'_{w-1}+2$.
\end{enumerate}
\end{lemma}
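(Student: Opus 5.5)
The plan is to mirror the proof of Lemma \ref{Y-2-lemma}: reduce $p_w(A)=B$ to a list of equations on the $b_i$, then check them one at a time using the relations that $p_{w-4}(A')=B'$ imposes on the products $a'_ia'_{i+1}$. Only case (ii) needs to be written out; case (i) is the mirror image.

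\textbf{Step 1 (quiddity cycle of $A$).} By Remark \ref{cut polygons} and Figure \ref{fig:n-cut}, a 4(b)-glue at the appropriate position gives
\[
q(A)=(a'_1,\ldots,a'_{w-3},\,a'_{w-2}+1,\,3,\,1,\,3,\,1,\,a'_{w-1}+3).
\]
Since friezes and $\mathbf{Y}$-friezes are determined by their quiddity sequences, $p_w(A)=B$ is equivalent to $b_i=a_ia_{i+1}-1$ for all $1\le i\le w+3$. Written out, this says:
\begin{itemize}
\item $b_i=a'_ia'_{i+1}-1$ for $1\le i\le w-4$;
\item $b_{w-3}=a'_{w-3}(a'_{w-2}+1)-1$ and $b_{w-2}=3a'_{w-2}+2$;
\item $b_{w-1}=b_w=b_{w+1}=2$;
\item $b_{w+2}=a'_{w-1}+2$ and $b_{w+3}=(a'_{w-1}+3)a'_1-1$.
\end{itemize}
The three entries equal to $2$ hold automatically, since they are exactly the hypothesis of the $\mathbf{Y}$-4-cut. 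It follows that $p_w(A)=B$ implies $b_{w+2}=a'_{w-1}+2$, which is the trivial direction.

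\textbf{Step 2 (what $B'$ gives).} Next I translate $p_{w-4}(A')=B'$ using Definition \ref{Y-n-cut}(3), in the spirit of (\ref{system2}) and (\ref{system3}). This yields $a'_ia'_{i+1}=b_i+1$ for $1\le i\le w-4$, together with
\[
a'_{w-3}a'_{w-2}=\frac{(b_{w-3}+1)(b_{w-2}-2)}{b_{w-2}+1},\qquad a'_{w-2}a'_{w-1}=\frac{(b_{w-2}-2)(b_{w+2}-2)}{3},
\]
\[
a'_{w-1}a'_1=\frac{(b_{w+2}-2)(b_{w+3}+1)}{b_{w+2}+1}.
\]
The first family already gives the equations for $1\le i\le w-4$.

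\textbf{Step 3 (the converse).} Now assume $b_{w+2}=a'_{w-1}+2$.
\begin{itemize}
\item Substituting into the middle relation and cancelling the positive integer $a'_{w-1}$ gives $a'_{w-2}=(b_{w-2}-2)/3$, that is, $b_{w-2}=3a'_{w-2}+2$.
\item Feeding this into the first relation, the factor $(b_{w-2}-2)/(b_{w-2}+1)$ becomes $a'_{w-2}/(a'_{w-2}+1)$. This gives $b_{w-3}+1=a'_{w-3}(a'_{w-2}+1)$.
\item In the last relation, $b_{w+2}-2=a'_{w-1}$ and $b_{w+2}+1=a'_{w-1}+3$. This gives $b_{w+3}+1=a'_1(a'_{w-1}+3)$.
\end{itemize}
All equations of Step 1 then hold, so $p_w(A)=B$.

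\textbf{Case (i).} The 4(a)-glue gives $q(A)=(a'_1,\ldots,a'_{w-3},a'_{w-2}+3,1,3,1,3,a'_{w-1}+1)$, so the required conditions become $b_{w-2}=a'_{w-2}+2$ and $b_{w+2}=3a'_{w-1}+2$, with the analogous outer equations. Starting from $b_{w-2}=a'_{w-2}+2$, the same cancellations run in the opposite direction.

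The only real care needed is bookkeeping: getting the indices of the glued positions right, and checking that each cancellation divides by a nonzero quantity. This holds because the $a'_i$ are positive integers; in particular $b_{w-2}-2$ and $b_{w+2}-2$ are positive whenever they are being divided out.
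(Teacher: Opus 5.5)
Your proposal is correct and follows the paper's proof essentially step for step. You rewrite $p_w(A)=B$ as $b_i=a_ia_{i+1}-1$ for the glued quiddity cycle, translate $p_{w-4}(A')=B'$ into the relations on $a'_ia'_{i+1}$ given by the $\mathbf{Y}$-4-cut, and recover the remaining equations by cancelling positive factors. The only difference is cosmetic: you write out the 4(b) case in full and treat 4(a) as the mirror image, whereas the paper writes out 4(a) and leaves 4(b) as analogous.
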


\begin{proof}
Directly from Remark \ref{cut polygons} and Proposition \ref{double cut}, if $A$ is obtained from $A'$ by a 4(a)-glue then $q(A)=(a'_1,a'_2,...,a'_{w-3},a'_{w-2}+3,1,3,1,3,a'_{w-1}+1).$ 

As in the proof of Lemma \ref{Y-2-lemma} and \ref{Y-3-lemma}, writing $q(B) = (b_1,\ldots, b_{w+3})$ then $p_{w}(A) = B$ may be restated as:

\begin{align*}
b_i =
\begin{cases}
        a'_ia'_{i+1} - 1, &1\leq i\leq w-4\\[0.5em]
        a'_{w-3}(a'_{w-2} +3)-1, &i=w-3\\[0.5em]
        a'_{w-2}+2, &i=w-2\\[0.5em]
        2, &w-1\leq i\leq w+1\\[0.5em]
        3a'_{w-1}+2, &i=w+2\\[0.5em]
        (a'_{w-1}+1)a'_1 -1, &i=w+3.
\end{cases}
\end{align*}

Note that $b_{w-1} = b_{w} = b_{w+1} = 2$ follows immediately from the conditions of a $\mathbf{Y}$-4-cut. On the other hand, by assumption, we have $p_{w-4}(A')=B'$ which is equivalent to $a'_{i}a'_{i+1} = b'_{i} + 1$ holding for all $1 \leq i \leq w-1$ where $q(B') = (b'_1,\ldots, b'_{w-1})$. By the definition of $\mathbf{Y}$-4-cut, this may be restated as:

    \begin{align*}\label{system4}
    a'_{i}a'_{i+1} = \begin{cases}
        \qquad b_i+1, &1\leq i\leq w-4\\[0.5em] 
        \frac{(b_{w-3}+1)(b_{w-2}-2)}{b_{w-2}+1}, &i=w-3\\[0.5em] 
        \frac{(b_{w-2}-2)(b_{w+2}-2)}{3}, &i=w-2\\[0.5em] 
        \frac{(b_{w+2}-2)(b_{w+3}+1)}{b_{w+2}+1}, &i=w-1.
        \end{cases}\tag{***}
    \end{align*}

Immediately from (\ref{system4}) we see $b_{i} = a'_ia'_{i+1} - 1$ for all $1\leq i\leq w-4$. Now, let us assume $b_{w-2} = a'_{w-2}+2$. Considering $i=w-3$ and $i=w-2$ from (\ref{system4}) we see \begin{align*} 
a'_{w-3}a'_{w-2} &= \frac{(b_{w-3}+1)(b_{w-2}-2)}{b_{w-2}+1} = \frac{(b_{w-3}+1)a'_{w-2}}{a'_{w-2}+3} \\
a'_{w-2}a'_{w-1} & = \frac{(b_{w-2}-2)(b_{w+2}-2)}{3} = \frac{a'_{w-2}(b_{w+2}-2)}{3}
\end{align*}

Consequently, $b_{w-3} = a'_{w-3}(a'_{w-2}+3) - 1$ and $b_{w+2} =  3a'_{w-1} + 2$. Similarly, considering $i = w-1$ from (\ref{system4}) and using $b_{w+2} =  3a'_{w-1} + 2$ we see: $$a'_{w-1}a'_{1} = \frac{(b_{w+2}-2)(b_{w+3}+1)}{b_{w+2}+1} = \frac{a'_{w-1}(b_{w+3}+1)}{a'_{w-1}+1}$$

Hence, as desired, we also have $b_{w+3} = (a'_{w-1} +1)a'_{1}-1$.

The proof of the case when $A$ is obtained from $A'$ by a 2(b)-glue follows analogously.    

\end{proof}

\begin{proposition}\label{4-surjection}
    Let $B$ be a $\mathbf{Y}$-frieze pattern of width $w$, and let $B'$ be the $\mathbf{Y}$-frieze pattern obtained from a $\mathbf{Y}$-4-cut on $B$, as in Proposition \ref{Y-4-cut}. If $B'$ is in the range of $p_{w-4}$, then $B$ is in the range of $p_w$.
\end{proposition}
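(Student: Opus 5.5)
We follow the template of Propositions \ref{2-surjection} and \ref{3-surjection}. By Lemma \ref{Y-4-lemma}, it suffices to find a frieze pattern $A'$ of width $w-4$ with $p_{w-4}(A')=B'$ and either $a'_{w-2}=b_{w-2}-2$ (then perform a 4(a)-glue) or $a'_{w-1}=b_{w+2}-2$ (then perform a 4(b)-glue). By hypothesis, the system (\ref{system4}) has at least one positive integer solution coming from some frieze $A''$. We split into cases according to the parity of the period $w-1$ of $A'$.

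\textbf{Case 1: $w-1$ even (i.e.\ $w$ odd).} The solutions of (\ref{system4}) form a one-parameter family related by alternating scalar factors. We build two rational candidates. The sequence $(c_i)$ is seeded by $c_{w-2}:=b_{w-2}-2$. The sequence $(d_i)$ is seeded by $d_{w-1}:=b_{w+2}-2$, which by the $i=w-2$ equation forces $d_{w-2}=(b_{w-2}-2)/3$. Each is propagated around the cycle using (\ref{system4}). Closure of each sequence is checked using Proposition \ref{product} applied to $B'$, exactly as in Case 1 of Proposition \ref{2-surjection}.

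The two candidates then satisfy $c_i=3d_i$ for one parity of $i$ and $d_i=3c_i$ for the other. The entries that are alternating quotients of the $(b_j+1)$ are integers by Proposition \ref{divisibility}. We then run the chain argument of Proposition \ref{2-surjection} with $3$ in place of $2$. Walking along the interleaved sequence $d_{w-2},c_{w-3},d_{w-3},\dots$, integrality persists until some term is not divisible by $3$. From that point, coprimality with $3$ combined with Proposition \ref{divisibility} forces every later term of that candidate to be an integer. Hence one of $(c_i)$, $(d_i)$ is a positive integer sequence. Since $(a''_i)$ also solves (\ref{system4}), Proposition \ref{ratio-classification} shows this candidate is the quiddity cycle of a frieze $A'$, and it meets the required condition by construction. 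This divisibility bookkeeping, especially at the two boundary entries $i=w-3$ and $i=w-1$ where the factors $(b_{w-2}-2)/(b_{w-2}+1)$ and $(b_{w+2}-2)/(b_{w+2}+1)$ appear, is the step I expect to be the main obstacle. I would also verify it against a small example built from the hexagon $(1,3,1,3,1,3)$.

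\textbf{Case 2: $w-1$ odd.} The positive solution of (\ref{system4}) is unique, so $A'=A''$. Take the alternating product of the equations in (\ref{system4}) that isolates ${a'_{w-2}}^2$. This gives
\[
{a'_{w-2}}^2=q\,\frac{(b_{w-2}-2)^2}{3}
\]
for an explicit alternating quotient $q$ of the terms $b_j+1$. Inserting the factors $b_{w-1}+1=b_w+1=b_{w+1}+1=3$ appropriately, Proposition \ref{divisibility} shows that both $3q$ and $3/q$ are positive integers, so $q\in\{\tfrac13,1,3\}$. If $q=1$, then $a'_{w-2}$ would involve $\sqrt3$, which is impossible since it is an integer. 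If $q=3$, then $a'_{w-2}=b_{w-2}-2$. If $q=\tfrac13$, then $a'_{w-2}=(b_{w-2}-2)/3$, and the $i=w-2$ equation gives $a'_{w-1}=b_{w+2}-2$. In either surviving case Lemma \ref{Y-4-lemma} applies with the matching glue, so $p_w(A)=B$.
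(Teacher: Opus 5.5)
Your proposal follows the paper's proof almost step for step; your $c$ and $d$ are the paper's $d$ and $c$. The shared ingredients are the reduction via Lemma \ref{Y-4-lemma}, the split on the parity of the period $w-1$ of $A'$, and two candidates differing by an alternating factor $3$, with closure from Proposition \ref{product} and the frieze property from Proposition \ref{ratio-classification}. In Case 2 both arguments use $3q,\,3/q\in\mathbb{Z}_{>0}$ to force $q\in\{\tfrac13,1,3\}$, and your Case 2 is complete as written.

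In Case 1, the step you flag is left open, and it is a real (if small) hole. Proposition \ref{divisibility} applied to $B$ does not give integrality of two kinds of cross-products: the seed $b_{w-2}-2$ times an even-indexed entry, and the entry at index $w-1$ times an odd-indexed entry. In general these are integer alternating quotients of the $b_k+1$ multiplied by $\frac{b_{w-2}-2}{b_{w-2}+1}$, respectively $\frac{b_{w+2}-2}{b_{w+2}+1}$. The paper's write-up passes over this point quickly as well, recording only that the product is an integer multiple of $\frac{b_{w+2}-2}{b_{w+2}+1}$.

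It closes in one line. For any solution $(x_i)$ of (\ref{system4}) and indices $i<j$ of opposite parity, $x_ix_j=\frac{(b'_i+1)(b'_{i+2}+1)\cdots(b'_{j-1}+1)}{(b'_{i+1}+1)\cdots(b'_{j-2}+1)}$. This does not depend on the solution, so it equals $a''_ia''_j\in\mathbb{Z}$; equivalently, apply Proposition \ref{divisibility} to $B'$ rather than to $B$.

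This also makes the ordered walk unnecessary. As written, your walk is mislabelled: it starts at $d_{w-2}=(b_{w-2}-2)/3$, which is not automatically an integer. Instead, let $x$ be the candidate with $x_{w-2}=b_{w-2}-2$, whose odd-indexed entries are integers, and let $y$ be the other one, whose even-indexed entries are integers. Suppose some odd-indexed $x_j$ is prime to $3$. Then for each even $i$ the reduced denominator of $x_i$ divides $3$ (because $3x_i=y_i$) and divides $x_j$ (because $x_ix_j\in\mathbb{Z}$), so $x$ is integral. Otherwise $3$ divides every odd-indexed $x_j$, and $y_j=x_j/3$ shows $y$ is integral.
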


\begin{proof}
    To prove the proposition, we will show there is a frieze pattern $A'$ of width $w-4$ such that: $$ p_{w-4}(A')=B' \qquad \text{and} \qquad p_{w}(A)=B$$
\noindent where $A$ is a frieze pattern obtained by performing a $4$-glue on $A'$.

Directly from Lemma \ref{Y-4-lemma}, in order to show this, it suffices to find a frieze pattern $A'$ such that $p_{w-4}(A') = B'$ and:
    \begin{enumerate}[label=\normalfont(\roman*)]
\item $b_{w-2} = a'_{w-2}+2$, if $A$ is obtained from $A'$ by a 2(a)-glue,
\item $b_{w+2} = a'_{w-1}+2$, if $A$ is obtained from $A'$ by a 2(b)-glue. 
    \end{enumerate}

Note that, as highlighted in the proof of Proposition \ref{2-surjection}, care is required when choosing $A'$ if $B$ has odd width. Indeed, Example \ref{Y-4-bad_choice} demonstrates it is possible that $p_{w-4}(A') = B'$ but $p_{w}(A)\neq B$ for any choice of 4-glue relating $A'$ to $A$. We shall thus divide our proof into two cases by the parity of $w+1$.

    \textbf{Case 1}: $w+1$ is even.

    Based on the conditions required for a 4(a) and 4(b)-glue, we will now construct two rational sequences $(d_1,\ldots, d_{w-1})$ and $(c_1,\ldots, c_{w-1})$ and argue that one of them consists entirely of integers, and is our desired $q(A')$.

    Setting $c_{w-1}:=b_{w+2}-2$, we then inductively generate the remaining $c_i$ for $w-2 \geq i \geq 1$ using (\ref{system4}). Doing so, we obtain: 
    
\[
c_i :=
\begin{cases}
\dfrac{b_{w-2}-2}{b_{w-1}+1},
& \text{if } i=w-2,\\[2ex]
\dfrac{(b_i+1)(b_{i+2}+1)\cdots(b_{w-3}+1)(b_{w-1}+1)}
{(b_{i+1}+1)(b_{i+3}+1)\cdots(b_{w-2}+1)},
& \text{if } 2\le i\le w-3 \text{ and } i \text{ is even},\\[3ex]
\dfrac{(b_i+1)(b_{i+2}+1)\cdots(b_{w-2}+1)}
{(b_{i+1}+1)(b_{i+3}+1)\cdots(b_{w-3}+1)(b_{w-1}+1)},
& \text{if } 1\le i\le w-4 \text{ and } i \text{ is odd},
\end{cases}
\]

\noindent where $b_{w-1}+1 = 3$. Furthermore, note that $(c_1,\ldots, c_{w-1})$ satisfies (\ref{system4}). Indeed, this follows from construction and the equality $\frac{(b_1+1)(b_{3}+1)\cdots(b_{w-2}+1)}
{(b_{2}+1)(b_{4}+1)\cdots(b_{w-3}+1)(b_{w-1}+1)} = \frac{b_{w+3}+1}{b_{w+2}+1}$. In particular, this equality is implied by Proposition \ref{product} applied to $B'$, and the use of (\ref{system4}). We stress this is a valid application of Proposition \ref{product} from the assumption that $p_{w-4}(A') = B'$ for some frieze $A'$.

Similarly, setting $d_{w-2}:=b_{w-2}-2$ and following (\ref{system4}) yields:

\[
d_i :=
\begin{cases}
\dfrac{(b_i+1)(b_{i+2}+1)\cdots(b_{w-2}+1)}
{(b_{i+1}+1)(b_{i+3}+1)\cdots(b_{w-3}+1)},
& \text{if } 1\le i\le w-4 \text{ and } i \text{ is odd},\\[3ex]
\dfrac{(b_i+1)(b_{i+2}+1)\cdots(b_{w-3}+1)}
{(b_{i+1}+1)(b_{i+3}+1)\cdots(b_{w-2}+1)},
& \text{if } 2\le i\le w-3 \text{ and } i \text{ is even},\\[3ex]
\dfrac{(b_{w+2}-2)(b_{w+3}+1)(b_{2}+1)(b_{4}+1)\cdots(b_{w-3}+1)}
{(b_{w+2}+1)(b_{1}+1)(b_{3}+1)\cdots(b_{w-2}+1)} = \dfrac{b_{w+2}-2}{3},
& \text{if } i=w-1,\\[2ex]
\end{cases}
\]

\noindent where the equality $\frac{(b_{w+2}-2)(b_{w+3}+1)(b_{2}+1)(b_{4}+1)\cdots(b_{w-3}+1)}
{(b_{w+2}+1)(b_{1}+1)(b_{3}+1)\cdots(b_{w-2}+1)} = \frac{b_{w+2}-2}{3}$ follows from Proposition \ref{product} applied to $B'$, and the use of (\ref{system4}). Consequently, $(d_1,\ldots, d_{w-1})$ satisfies (\ref{system4}).

Note that we have the following, where the integrality follows by Proposition \ref{divisibility}:
\begin{equation}
\label{Y-4-oscillating}
\left\{
\begin{aligned}
&3d_i=c_i\in\mathbb{Z}, &&\text{if $i$ is even},\\
&3c_i=d_i\in\mathbb{Z}, &&\text{if $i$ is odd}.
\end{aligned}
\right.
\end{equation} 



Now, consider the following sequence: \begin{equation}
\label{Y-4-candidate-sequence}
d_{w-2},\; c_{w-2},\; c_{w-3},\; d_{w-3},\; d_{w-4},\;\ldots,\; c_2,\; d_2,\; d_1,\; c_1,\; c_{w-1},\; d_{w-1}.
\end{equation}

By (\ref{Y-4-oscillating}), if a given term $d_{2k+1}$ or $c_{2k}$ is divisible by $3$ then $c_{2k+1}$ or $d_{2k}$ is an integer, respectively. Hence, moving down the sequence (\ref{Y-4-candidate-sequence}), all entries are guaranteed to be integers until we meet a term $d_{2k+1}$ or $c_{2k}$ that is not divisible by $3$. On the other hand, for any $0 \leq k \leq \frac{w-3}{2}$, if $d_{2k+1} \not\equiv 0 \mod 3$, then every subsequent term $d_{2k+1},d_{2k},\ldots, d_1 ,d_{w-1}$ is an integer. 

Indeed, for each such subsequent term $d_i$, if $i$ is odd then $d_i \in \mathbb{Z}$ by (\ref{Y-4-oscillating}), so it remains to consider when $i$ is even. In that case, by Proposition \ref{divisibility} the product $d_i\cdot d_{2k+1} \in \mathbb{Z}$ for $2k+1 \geq i \geq 2$, and an integer multiple of $\frac{(b_{w+2}-2)}{(b_{w+2}+1)}$ if $i = w-1$. Furthermore, since $3d_i \in \mathbb{Z}$ by (\ref{Y-4-oscillating}), then $d_i \in \mathbb{Z}$ by the assumption that $\gcd(d_{2k+1},3) = 1$.

%
%

%
%
%

Likewise, for any $1 \leq k \leq \frac{w-3}{2}$, if $c_{2k} \not\equiv 0 \mod 3$, then every subsequent term $c_{2k},c_{2k-1},\ldots,c_1, c_{w-1}$ is an integer. 

Therefore, as desired, either $(c_1,\ldots, c_{w-1})$ or $(d_1,\ldots, d_{w-1})$ is an integer sequence, and we choose $q(A'): = (a'_1,\ldots, a'_n)$ to be one such sequence.

    Finally, by assumption, since there exists $(a''_i)_{i\in\mathbb{Z}}$ that also satisfies equations (\ref{system4}), then our integer choice of $(a'_i)_{i\in\mathbb{Z}}$ is a frieze pattern by Proposition \ref{ratio-classification}. This completes the proof of case 1.

    \textbf{Case 2}: $w+1$ is odd.

    Since $w+1$ is odd and by assumption the system of equations (\ref{system4}) has a solution, there is a unique positive solution $(a'_i)_{i\in\mathbb{Z}}$.

    We can compute ${a'_{w-2}}^2$ by considering ${a'_{w-2}}^2=\frac{(a'_1a'_2)\cdots (a'_{w-3}a'_{w-2})(a'_{w-2}a'_{w-1})}{(a'_{w-1}a'_1)(a'_2a'_3)\cdots (a'_{w-4}a'_{w-3})}$ and using the equations (\ref{system4}) to get: 

    \[{a'_{w-2}}^2=\frac{(b_{w+2}+1)(b_1+1)(b_3+1)\cdots(b_{w-3}+1)(b_{w-2}-2)^2}{3(b_{w+3}+1)(b_2+1)(b_4+1)\cdots(b_{w-4}+1)(b_{w-2}+1)}\]

    Consider $q=\frac{(b_{w+2}+1)(b_1+1)(b_3+1)\cdots(b_{w-3}+1)}{(b_{w+3}+1)(b_2+1)(b_4+1)\cdots(b_{w-4}+1)(b_{w-2}+1)}$. Since $b_{w-1}+1=3$ then $$3q=\frac{(b_{w+2}+1)(b_1+1)(b_3+1)\cdots(b_{w-3}+1)(b_{w-1}+1)}{(b_{w+3}+1)(b_2+1)(b_4+1)\cdots(b_{w-4}+1)(b_{w-2}+1)}$$ is a positive integer by Proposition \ref{divisibility}. On the other hand, since $b_{w+1}+1=3$ then $$\frac{3}{q}=\frac{(b_{w+1}+1)(b_{w+3}+1)(b_2+1)(b_4+1)\cdots(b_{w-4}+1)(b_{w-2}+1)}{(b_{w+2}+1)(b_1+1)(b_3+1)\cdots(b_{w-3}+1)}$$ is also a positive integer by Proposition \ref{divisibility}. Hence $q \in \{\frac{1}{3}, 1, 3\}$.

    If $q=\frac{1}{3}$, then $a'_{w-2}=\frac{b_{w-2}-2}{3}$ (so $a'_{w-1}=b_{w+2}-2$), and if $q=3$, then $a'_{w-2}=b_{w-2}-2$. If $q=1$, then $a'_{w-2}=\frac{b_{w-2}-2}{\sqrt{3}}$ which is absurd since $a'_{w-2}$ is a frieze pattern entry and hence must be an integer.

    In summary, for our frieze pattern $A'$ satisfying $p_{w-4}(A') = B'$ we have either $a'_{w-2}=b_{w-2}-2$ or $a'_{w-1}=b_{w+2}-2$. Therefore, Lemma \ref{Y-4-lemma} guarantees $p_w(A) = B$, and the proof of case 2 is complete.
    
\end{proof}

\begin{example}\label{Y-4-bad_choice}
Consider the $\mathbf{Y}$-frieze $B$ of width $w = 9$ with $q(B)=(3,3,3,3,1,8,8,2,2,2,5,3)$. Then $q(B')=(3,3,3,3,1,5,5,1)$ and there are two friezes $A'$ such that $q(A') = B'$; the ``good'' choice of $A'$ has $q(A')=(2,2,2,2,1,6,1)$ and the ``bad'' choice has $q(A')=(1,4,1,4,1,2,3,2)$. Moreover, for the ``bad'' choice of $A'$, one can easily verify there is no frieze $A$ arising from a $4$-glue on $A'$ such that $p_{w}(A) = B$.
\end{example}

\begin{theorem}\label{main}
    Every $\mathbf{Y}$-frieze pattern of width $w$ is the image, under $p_w$, of a frieze pattern of width $w$.
\end{theorem}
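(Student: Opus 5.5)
We argue by strong induction on the width $w$, following the Conway--Coxeter strategy: given a $\mathbf{Y}$-frieze pattern $B$ of width $w$, we perform a $\mathbf{Y}$-$n$-cut to reach a $\mathbf{Y}$-frieze pattern $B'$ of smaller width, apply the induction hypothesis to $B'$, and lift back using Propositions \ref{2-surjection}, \ref{3-surjection} and \ref{4-surjection}.

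\textbf{Base cases.} For widths $w\le 3$ we verify directly that $p_w$ is surjective. For $w=-1,0$ the pattern is trivial. For $w=1,2,3$ the $\mathbf{Y}$-frieze patterns are determined by periodic positive integer quiddity cycles, and there are only finitely many. Their entries are bounded, since closure forces $f_{w+1}=0$, and Lemma \ref{growth} shows that large entries make the diagonal grow. So we enumerate them, or bound the entries via Proposition \ref{divisibility} and the diagonal recursion of Proposition \ref{y-diagonals}, and exhibit a preimage for each. The case split is needed because Propositions \ref{Y-2-cut}, \ref{Y-3-cut} and \ref{Y-4-cut} are stated only for $w\ge 4$.

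\textbf{Inductive step.} Fix $w\ge 4$ and let $B$ be a $\mathbf{Y}$-frieze pattern of width $w$. By Proposition \ref{can cut}, the first row of $B$ contains one of the following:
\begin{enumerate}[label=\normalfont(\arabic*)]
\item an entry $1$;
\item a subsequence $(x,2,2,y)$ with $x$ or $y$ not congruent to $2$ modulo $3$;
\item the subsequence $(2,2,2)$.
\end{enumerate}
In these three cases, Propositions \ref{Y-2-cut}, \ref{Y-3-cut} and \ref{Y-4-cut} respectively give a $\mathbf{Y}$-frieze pattern $B'$ of width $w-2$, $w-3$ or $w-4$, obtained by the corresponding $\mathbf{Y}$-$n$-cut. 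All of these widths are at least $0$. By the induction hypothesis, $B'=p_{w-n}(A')$ for some frieze pattern $A'$. Propositions \ref{2-surjection}, \ref{3-surjection} and \ref{4-surjection} then show that $B$ lies in the range of $p_w$, via a suitably chosen $A'$ (possibly $\overline{A'}$) followed by an $n$-glue.

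\textbf{Compatibility checks.} We must confirm that the hypotheses of those propositions are exactly what Proposition \ref{can cut} and the cut propositions supply. This includes the reindexing that places the relevant entries at positions $w-1,\dots,w+2$, justified by the $(w+3)$-periodicity and glide symmetry. In case (2) the condition fails for $x$ but holds for $y$; we then use the reflected indexing, obtained by reversing the orientation of the frieze, which also preserves $\mathbf{Y}$-friezes.

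\textbf{Main obstacle.} The delicate point is the small widths at which the cut propositions do not apply, or at which the cut produces a degenerate width $0$ or $-1$. For these the lifting propositions rely on Proposition \ref{product} for $B'$, and we must check that this still makes sense at those widths. I would handle this by extending the base cases up to $w\le 4$ or $w\le 5$ where necessary, checking them directly using the finite enumeration above.
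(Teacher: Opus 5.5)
Your proposal is correct and is essentially the paper's own argument: induction on $w$, with Proposition~\ref{can cut} to locate a $\mathbf{Y}$-$n$-cut, Propositions~\ref{Y-2-cut}, \ref{Y-3-cut}, \ref{Y-4-cut} to produce $B'$, and Propositions~\ref{2-surjection}, \ref{3-surjection}, \ref{4-surjection} to lift back. Treating $w=3$ as a base case is in fact slightly more careful than the paper, whose base cases stop at $w=2$ even though the cut propositions assume $w\ge 4$ (e.g.\ $q(B)=(2,2,2,2,2,2)$ at $w=3$ admits only a $\mathbf{Y}$-$4$-cut, which is undefined at period $6$); the finiteness you need there comes directly from the diamond relations, which force every entry to be at most $5$, not from Lemma~\ref{growth}, which does not bound entries on its own.
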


\begin{proof}

    We prove the result by induction on $w$.
    
    The cases where $w=-1,0$, or $1$ are trivial. The statement also holds for $w=2$ since the $\mathbf{Y}$-diamond rule and diamond rule coincide for the nontrivial entries.

    Now, suppose the statement holds for all $w'<w$. Let $B$ be a $\mathbf{Y}$-frieze pattern of width $w$. By Proposition \ref{can cut}, there exists a subsequence that makes a $\mathbf{Y}$-$n$-cut possible on $B$ for some $n \in \{2,3,4\}$. Let $B'$ be the resulting $\mathbf{Y}$-frieze pattern resulting from such an $n$-cut. By our inductive hypothesis, $B'$ is in the range of $p_{w-n}$. Therefore, applying Propositions \ref{Y-2-cut}, \ref{Y-3-cut}, and \ref{Y-4-cut}, we see that $B$ is in the range of $p_w$, as desired.
\end{proof}

Combining Theorem \ref{Y-frieze from frieze numnber} and Theorem \ref{main} leads to the enumeration of all $\mathbf{Y}$-frieze patterns.

\begin{corollary}
\label{main-enumeration}
Let $\lvert\operatorname{YFrieze}(w)\rvert$ denote the number of $\mathbf{Y}$-frieze patterns of width $w$. Then we have:

\[
\lvert\operatorname{YFrieze}(w)\rvert = \begin{cases}
C_{w+1}^{(2)},
&\text{if $w$ is even};\\[1.5em]
C_{w+1}^{(2)}-C_{\frac{w+1}{2}}^{(3)},
&\text{if $w\equiv1\text{ }(\operatorname{mod}4)$};\\[1.5em]
C_{w+1}^{(2)}-C_{\frac{w+1}{2}}^{(3)}-C_{\frac{w+1}{4}}^{(5)},
&\text{if $w\equiv3\text{ }(\operatorname{mod}4)$}.
\end{cases}
\]

\noindent where $C_{p}^{(q)}:=\frac{1}{(q-1)p+1}{qp\choose p}$ are the Fuss-Catalan numbers.
\end{corollary}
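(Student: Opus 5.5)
By Theorem \ref{main}, the map $p_w$ is surjective. Hence $\lvert\operatorname{YFrieze}(w)\rvert = \lvert p_w(\operatorname{Frieze}(w))\rvert$, and Theorem \ref{Y-frieze from frieze numnber} has already computed the right-hand side. The plan is to combine these two results. The only real work is to make sure the count in Theorem \ref{Y-frieze from frieze numnber} is justified, so I will recheck each step of that count.

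First, $\lvert\operatorname{Frieze}(w)\rvert = C^{(2)}_{w+1}$, the number of triangulations of a $(w+3)$-gon, by Theorem \ref{Conway-Coxeter-thm}. Second, the fibres of $p_w$ have size at most $2$. This follows from Propositions \ref{distinct-implies-odd} and \ref{distinct-bound}: a fibre of size $2$ can only occur when $w$ is odd. So $\lvert p_w(\operatorname{Frieze}(w))\rvert$ equals $\lvert\operatorname{Frieze}(w)\rvert$ minus the number of unordered pairs $\{A,\overline{A}\}$.

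Third, I will show that such pairs are in bijection with $\mathcal{P}_{w+3,4}\sqcup\mathcal{P}_{w+3,6}$. In one direction, Theorem \ref{equivalence} sends a pair with ratio $2^{\pm1}$ to some $P\in\mathcal{P}_{w+3,4}$, and a pair with ratio $3^{\pm1}$ to some $P\in\mathcal{P}_{w+3,6}$. Conversely, the corollary following Proposition \ref{count triangles} shows that for each such $P$ the triangulations $P_1$ and $P_2$ have quiddity cycles related by the alternating factor $k/2$. For odd $w$, $P_1$ and $P_2$ are therefore distinct and $\mathbf{Y}$-equivalent.

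Injectivity in $P$ needs one extra observation. The partition $P$ is recovered from the pair as the set of arcs common to $P_1$ and $P_2$: within each quadrilateral the two diagonals differ, and within each hexagon the two order-$2$-symmetric triangulations share no diagonal. The ratio $2$ versus $3$ tells us which of the two families $P$ belongs to. Note also that $\mathcal{P}_{w+3,4}$ is empty unless $2\mid w+1$, and $\mathcal{P}_{w+3,6}$ is empty unless $4\mid w+1$. This produces the three cases $w$ even, $w\equiv1 \pmod 4$, and $w\equiv 3\pmod 4$.

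Finally, the cited formula of Hilton--Pedersen gives $\lvert\mathcal{P}_{n,k}\rvert = C^{(k-1)}_{(n-2)/(k-2)}$. With $n=w+3$ this yields $C^{(3)}_{(w+1)/2}$ for $k=4$ and $C^{(5)}_{(w+1)/4}$ for $k=6$. Substituting these values gives the stated formula.

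The main obstacle I expect is the bijectivity in the third step: confirming that $P\mapsto\{P_1,P_2\}$ is injective, and that the two families cannot produce the same pair. The latter follows because the ratio is well defined by Corollary \ref{Cuntz-Holm}, so a pair has ratio $2^{\pm1}$ or $3^{\pm1}$ but not both. I would address injectivity via the common-arc description of $P$ given above.
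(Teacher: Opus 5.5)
Your proposal is correct and is the paper's argument: surjectivity from Theorem \ref{main}, combined with the image count of Theorem \ref{Y-frieze from frieze numnber}. You also make explicit a step the paper leaves implicit, namely that $P\mapsto\{P_1,P_2\}$ is a bijection onto $\mathbf{Y}$-equivalent pairs: $P$ is recovered as the set of common arcs, and the two families are separated by the factor $2^{\pm1}$ versus $3^{\pm1}$. One caveat, which the paper shares: at $w=-1$ one has $C^{(q)}_0=1$, so the right-hand side equals $1-1-1=-1\neq 1=\lvert\operatorname{YFrieze}(-1)\rvert$, and the Hilton--Pedersen count (and hence the statement) should be restricted to $w\ge 0$.
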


\printbibliography

\end{document}